\definecolor{newblue}{rgb}{0.2, 0.3, 0.85}
\newcommand{\mm}{\mathfrak m}
\newcommand{\sfd}{{\sf d}}
\newcommand{\nchi}{{\raise.3ex\hbox{\(\chi\)}}}
\numberwithin{equation}{section}
\definecolor{dgreen}{rgb}{0.0, 0.56, 0.0}
\newcommand{\N}{\ensuremath{\mathbb N}}
\newcommand{\R}{\ensuremath{\mathbb R}}
\newcommand{\meas}{\mathfrak{m}}
\newcommand{\lip}{{\rm lip \,}}
\newcommand{\st}{\ensuremath{\ :\ }} 
\newcommand{\eqdef}{\ensuremath{\vcentcolon=}}
\newcommand \eps{\ensuremath{\varepsilon}} 
\renewcommand{\epsilon}{\varepsilon}
\newcommand{\de}{\ensuremath{\,\mathrm d}} 
\DeclareMathOperator{\supp}{spt} 
\newcommand{\CD}{\mathsf{CD}}
\newcommand{\RCD}{\mathsf{RCD}}
\newcommand{\dist}{\mathsf{d}} 
\newcommand{\haus}{\mathcal{H}} 
\DeclareMathOperator{\vol}{vol}
\theoremstyle{plain}
\newtheorem{theorem}{Theorem}[section] 
\theoremstyle{plain}
\theoremstyle{plain}
\newtheorem{proposition}[theorem]{Proposition}
\theoremstyle{plain}
\newtheorem{lemma}[theorem]{Lemma}
\theoremstyle{plain}
\newtheorem{corollary}[theorem]{Corollary}
\theoremstyle{definition}
\newtheorem{definition}[theorem]{Definition} 
\theoremstyle{definition}
\newtheorem{remark}[theorem]{Remark}
\theoremstyle{definition}
\theoremstyle{definition}
\title[The isoperimetric problem via direct method]{The isoperimetric problem via direct method in noncompact metric measure spaces with lower Ricci bounds}
\author[G. Antonelli]{Gioacchino Antonelli}\address{Scuola Normale Superiore, Piazza dei Cavalieri, 7, 56126 Pisa, Italy.}\email{gioacchino.antonelli@sns.it}
\author[S. Nardulli]{Stefano Nardulli}\address{Centro de Matemática Cognição e Computação, Universidade Federal do ABC, Av. Dos Estados 5001, Santo André, SP, Brazil}\email{stefano.nardulli@ufabc.edu.br}
\author[M. Pozzetta]{Marco Pozzetta}\address{Dipartimento di Matematica e Applicazioni, Universit\`a di Napoli Federico II, Via Cintia, Monte S. Angelo, 80126 Napoli, Italy.}\email{marco.pozzetta@unina.it}
\date{\today}
\subjclass{Primary: 49Q20, 49J45, 53A35. Secondary: 53C23.}
\keywords{Isoperimetric problem, existence, direct method, Ricci lower bounds, $\RCD$ spaces.}
\begin{document}

\maketitle

\begin{abstract}
We establish a structure theorem for minimizing sequences for the isoperimetric problem on noncompact $\RCD(K,N)$ spaces $(X,\dist,\haus^N)$. Under the sole (necessary) assumption that the measure of unit balls is uniformly bounded away from zero, we prove that the limit of such a sequence is identified by a finite collection of isoperimetric regions possibly contained in pointed Gromov--Hausdorff limits of the ambient space $X$ along diverging sequences of points. The number of such regions is bounded linearly in terms of the measure of the minimizing sequence.

The result follows from a new generalized compactness theorem, which identifies the limit of a sequence of sets $E_i\subset X_i$ with uniformly bounded measure and perimeter, where $(X_i,\dist_i,\haus^N)$ is an arbitrary sequence of $\RCD(K,N)$ spaces.

An abstract criterion for a minimizing sequence to converge without losing mass at infinity to an isoperimetric set is also discussed. The latter criterion is new also for smooth Riemannian spaces.
\end{abstract}

\tableofcontents

\vspace{1cm}

\section{Introduction}

The \emph{isoperimetric problem} can be formulated on every ambient space possessing notions of \emph{volume measure} $\meas$ and \emph{perimeter} $P$ on (some subclass of) its subsets. Among sets having assigned positive volume, the problem deals with finding those having least perimeter. Among the most basic questions in the context of the isoperimetric problem, one would naturally ask whether there exist minimizers, called \emph{isoperimetric regions} (or \emph{isoperimetric sets}), but also what goes wrong in the minimization process in case such minimizers do not exist. The value of the infimum of the perimeter among sets of a given volume $V$ is called \emph{isoperimetric profile} at $V$.

A natural class of spaces where to set the isoperimetric problem is given by metric measure spaces $(X,\dist,\meas)$ (see \cref{sec:Preliminaries}). Indeed, the nonnegative Borel measure $\meas$ plays the role of a volume functional, and, together with a distance $\dist$, it is possible to give a definition of perimeter $P$ (see \cref{def:BVperimetro}). The smooth and more classical counterpart of these spaces is given by Riemannian manifolds. If $(M,g)$ is a Riemannian manifold of dimension $N$, the natural Riemannian distance and the $N$-dimensional Hausdorff measure $\haus^N$ yield the structure of metric measure space, and the corresponding definition of perimeter recovers the classical well-known notion à la Caccioppoli--De Giorgi. In fact, the theory of $BV$ functions and of the perimeter functional on metric measure spaces has been blossoming in the last decades \cite{Miranda03, Ambrosio02, AmbrosioAhlfors, MirandaPallaraParonettoPreunkert07, AmbrosioDiMarino14}.

We are here interested in the problem of the existence of isoperimetric regions, assuming least possible hypotheses on the ambient space. Hence the most natural way to approach the problem is to argue \emph{by direct method}, that is, by studying the behavior of a minimizing sequence of sets $E_i$ of fixed volume $V$ whose perimeter is converging to the isoperimetric profile at volume $V$. It is therefore understood that, by usual precompactness and lower semicontinuity, the problem of existence is nontrivial only in case the ambient is \emph{noncompact} (actually, with infinite measure), which we will always assume.

Already in the smooth ambient, the development of an effective theory of a direct method for the isoperimetric problem is a difficult task. Studying the problem in Euclidean solid cones, in \cite{RitRosales04} the authors identified a general mass splitting phenomenon of a minimizing sequence for the problem, where the sequence decomposes into two components, one converging in the space and the other diverging at infinity. Such result is here generalized in \cref{thm:RitoreRosalesNonSmooth}. Combining this approach with a concentration-compactness argument, in \cite{Nar14} the author performed a better description of the possible mass lost at infinity for the problem on Riemannian manifolds satisfying some mild asymptotic hypotheses on their ends. The theory has then been successfully applied to get existence theorems in \cite{MondinoNardulli16} and further generalized in \cite{FloresNardulliCompactness}. Finally, in \cite{AFP21} the above mentioned hypotheses on the ends of the manifold have been removed, further generalizing the method on Riemannian manifolds.

After the results in \cite{AFP21}, it is today understood that, already in smooth Riemannian manifolds, the isoperimetric problem becomes trivial, i.e., the isoperimetric profile vanishes, unless it is assumed a lower bound on the Ricci curvature and a positive lower bound on the volume of unit balls. In fact, as one of the two hypotheses is not satisfied, one can find examples where a description of the behavior of minimizing sequences is actually compromised, see \cite{AFP21}. Therefore, it becomes natural to consider the isoperimetric problem on $\RCD(K,N)$ metric measure spaces, which are spaces encoding synthetic notions of Ricci curvature bounded below by $K\in \R$ and dimension bounded above by $N\in(0,+\infty]$, see \cref{sec:Preliminaries}. Moreover, we shall address in this work the case of $\RCD(K,N)$ spaces of the form $(X,\dist,\haus^N)$, i.e., endowed with the $N$-dimensional Hausdorff measure. We will call such spaces $N$-dimensional $\RCD(K,N)$ spaces. The case of arbitrary volume measures $\meas$ appears to be more involved and related to a better understanding of the properties of the density of $\meas$ with respect to the Hausdorff measure of the essential dimension of the space. We stress that the class of $N$-dimensional $\RCD(K,N)$ spaces, that has been recently introduced and studied in the works \cite{Kitabeppu17, DePhilippisGigli18, AntBruSem19, BrueNaberSemola20}, is the non-smooth generalization of the class of non collapsed Ricci limit spaces \cite{ChCo1}, in which a volume convergence theorem holds. The Riemannian assumption is necessary here to exploit the convergence and stability results of \cite{AmborsioBrueSemola19}.

It is remarkable to notice that the development of a theory on such nonsmooth spaces already is a \emph{necessary consequence} also of the approach by direct method of the problem on perfectly smooth Riemannian manifolds \cite{AFP21}. Indeed, nonsmooth $\RCD(K,N)$ spaces $(X,\dist,\haus^N)$ arise as limits in pointed Gromov--Hausdorff sense (\cref{def:GHconvergence}) of smooth manifolds $M$ with Ricci and volume of unit balls  bounded below along sequences of points diverging on $M$.

Capitalizing on the methods developed in \cite{Nar14, AFP21, AntonelliPasqualettoPozzetta}, we are able to give a description of the behavior of perimeter minimizing sequences for the isoperimetric problem on $\RCD$ spaces as follows, generalizing and improving our previous results. For the notation and the notions of convergence appearing in \cref{thm:MassDecompositionINTRO}, see \cref{sec:Preliminaries}, \cref{def:GHconvergence}, and \cref{def:L1strong} below.

\begin{theorem}[Asymptotic mass decomposition]\label{thm:MassDecompositionINTRO}
Let $K\leq 0$ and $N\geq 2$.
Let $(X,\dist,\mathcal{H}^N)$ be a noncompact $\RCD(K,N)$ space. Assume there exists $v_0>0$ such that $\mathcal{H}^N(B_1(x))\geq v_0$ for every $x\in X$. Let $V>0$. For every minimizing $($for the perimeter$)$ sequence of bounded sets $\Omega_i\subset X$ of volume $V$, up to passing to a subsequence, there exist a nondecreasing bounded sequence $\{N_i\}_{i\in\mathbb N}\subseteq \mathbb N$, disjoint finite perimeter sets $\Omega_i^c, \Omega_{i,j}^d \subset \Omega_i$, and points $p_{i,j}$, with $1\leq j\leq N_i$ for any $i$, such that the following claims hold
\begin{itemize}
    \item $\lim_{i} \dist(p_{i,j},p_{i,\ell}) = \lim_{i} \dist(p_{i,j},o)=+\infty$, for any $j\neq \ell \le\overline N$ and any $o\in X$, where $\overline N:=\lim_i N_i <+\infty$;
    \item $\Omega_i^c$ converges to $\Omega\subset X$ in the sense of finite perimeter sets, $\mathcal{H}^N(\Omega_i^c)\to_i \mathcal{H}^N(\Omega)$, and $ P( \Omega_i^c) \to_i P(\Omega)$. Moreover $\Omega$ is an isoperimetric region in $X$;
    \item for every $0<j\le\overline N$, $(X,\dist,\mathcal{H}^N,p_{i,j})$ converges in the pmGH sense  to a pointed $\RCD(K,N)$ space $(X_j,\dist_j,\mathcal{H}^N,p_j)$. Moreover there are isoperimetric regions $Z_j \subset X_j$ such that $\Omega^d_{i,j}\to_i Z_j$ in $L^1$-strong and $P(\Omega^d_{i,j}) \to_i P(Z_j)$;
    \item it holds that
    \begin{equation}\label{eq:UguaglianzeIntro}
    I_X(V) = P(\Omega) + \sum_{j=1}^{\overline{N}} P (Z_j),
    \qquad\qquad
    V=\mathcal{H}^N(\Omega) +  \sum_{j=1}^{\overline{N}} \mathcal{H}^N(Z_j).
    \end{equation}
\end{itemize}
\end{theorem}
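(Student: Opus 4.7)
The plan is to argue by iterated concentration-compactness, combining the nonsmooth Ritoré--Rosales splitting result \cref{thm:RitoreRosalesNonSmooth} with the generalized compactness theorem for finite perimeter sets in sequences of $\RCD(K,N)$ spaces described in the introduction. At each stage one applies a dichotomy to the remainder of the minimizing sequence: either its mass is asymptotically tight around a fixed base point and yields a converging piece in the ambient space $X$, or a definite portion of mass escapes to infinity along a sequence of points $p_{i,j}\in X$, in which case pmGH compactness and the generalized compactness theorem produce a limit space $(X_j,\dist_j,\haus^N,p_j)$ together with an $L^1$-strong limit $Z_j\subset X_j$ of the translated sets, with convergence of perimeters.

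First I would fix a base point $o\in X$ and, given $\Omega_i$ with $P(\Omega_i)\to I_X(V)$, apply \cref{thm:RitoreRosalesNonSmooth} to split $\Omega_i=\Omega_i^c\sqcup R_i^1$ with $\Omega_i^c$ converging in the sense of finite perimeter sets to some $\Omega\subset X$, with $\haus^N(\Omega_i^c)\to\haus^N(\Omega)$ and $P(\Omega_i^c)\to P(\Omega)$, and with $R_i^1$ fully escaping. If $\haus^N(R_i^1)\to 0$ we are done with $\overline N=0$. Otherwise, the uniform bound $\haus^N(B_1(x))\ge v_0$ together with a uniform upper bound on $P(R_i^1)$ allows us to pick $p_{i,1}\in X$ with $\dist(p_{i,1},o)\to\infty$ and $\haus^N(R_i^1\cap B_1(p_{i,1}))$ bounded below. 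Standard pmGH precompactness of noncollapsed $N$-dimensional $\RCD(K,N)$ spaces extracts a subsequence so that $(X,\dist,\haus^N,p_{i,1})\to(X_1,\dist_1,\haus^N,p_1)$, and the generalized compactness theorem then produces $Z_1\subset X_1$ of positive volume such that the portion $\Omega^d_{i,1}$ of $R_i^1$ extracted near $p_{i,1}$ satisfies $\Omega^d_{i,1}\to Z_1$ in $L^1$-strong and $P(\Omega^d_{i,1})\to P(Z_1)$. Iterating the procedure on the new remainder $R_i^1\setminus\Omega^d_{i,1}$ generates bubbles $Z_2,Z_3,\dots$ whose centers $p_{i,j}$ diverge from $o$ and from each other.

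The iteration terminates in finitely many steps because the $\RCD(K,N)$ condition, coupled with the noncollapsing bound (which is inherited by all pmGH limits by volume convergence), yields a uniform isoperimetric inequality $I_{X_j}(v)\ge c(K,N,v_0)\,v^{(N-1)/N}$ for $v\in(0,1)$. Since $P(Z_j)\le I_X(V)+1$, this forces $\haus^N(Z_j)\ge v_*(K,N,v_0,I_X(V))>0$, so at most $V/v_*$ bubbles can be extracted, giving $\overline N<\infty$ and the nondecreasing boundedness of $\{N_i\}$. The volume identity in \eqref{eq:UguaglianzeIntro} then follows from $\haus^N(\Omega_i^c)+\sum_{j=1}^{N_i}\haus^N(\Omega^d_{i,j})=V$ by passing to the limit. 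For the perimeter identity, the inequality $I_X(V)\ge P(\Omega)+\sum_j P(Z_j)$ is immediate from lower semicontinuity of $P$ under the respective convergences. The reverse inequality and the isoperimetric character of $\Omega$ and of each $Z_j$ follow by a gluing competitor argument: given $\varepsilon>0$, one transplants each $Z_j$ back into $X$ via the pmGH approximation at the scale of $p_{i,j}$ for $i$ large, placed disjointly and far from $\Omega$, producing a competitor of volume arbitrarily close to $V$ and perimeter arbitrarily close to $P(\Omega)+\sum_j P(Z_j)$; a small volume adjustment then yields a genuine competitor of volume $V$, and any strict improvement on $\Omega$ or on some $Z_j$ in its own space would contradict the minimality of $\{\Omega_i\}$.

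The main obstacle is the generalized compactness theorem itself, which is the technical backbone of the argument: given sets $E_i$ of uniformly bounded volume and perimeter in varying pmGH-convergent $\RCD(K,N)$ spaces $(X_i,\dist_i,\haus^N,p_i)$, one must produce a limit $E\subset X_\infty$ with $E_i\to E$ in $L^1$-strong, together with lower semicontinuity and, after extracting bubbles at infinity, asymptotic recovery of the perimeter. This requires embedding all the spaces into a common proper ambient space, transferring the $BV$-calculus through the pmGH convergence, and crucially relies on the volume convergence and tangent structure available for $N$-dimensional $\RCD(K,N)$ spaces. A secondary delicate point is the perimeter bookkeeping along the iteration: one must ensure that the perimeter of each remainder $R_i^k$ stays controlled, so that after finitely many splits no residual diverging mass or perimeter is left unaccounted for in the limit.
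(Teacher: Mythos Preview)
Your overall architecture coincides with the paper's: first apply \cref{thm:RitoreRosalesNonSmooth} to extract the converging part $\Omega$, then run iterated concentration--compactness on the diverging remainder via the generalized compactness theorem, and finally use gluing/competitor arguments to upgrade lower semicontinuity to equality of perimeters and to prove that $\Omega$ and each $Z_j$ are isoperimetric. On this level the proposal is correct.

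There is, however, a genuine gap in your termination argument. You claim that the uniform small-volume isoperimetric inequality $I_{X_j}(v)\ge c\,v^{(N-1)/N}$ together with $P(Z_j)\le I_X(V)+1$ forces $\haus^N(Z_j)\ge v_*>0$. This implication is false: an \emph{upper} bound on $P(Z_j)$ combined with $P(Z_j)\ge c\,\haus^N(Z_j)^{(N-1)/N}$ only yields an \emph{upper} bound on $\haus^N(Z_j)$. There is nothing in the concentration--compactness step alone that prevents the bubbles from having geometrically decaying volumes $\haus^N(Z_j)\sim 2^{-j}$; in that situation both $\sum_j\haus^N(Z_j)$ and $\sum_j P(Z_j)$ are finite and no contradiction arises from your stated inequalities. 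The lower bound coming from \cref{lem:MasLowBound} is also not uniform, since it scales like $(\text{remaining mass})^N/P^N$ and hence degenerates along the iteration.

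The paper closes this gap by a different mechanism (Step~3 of the proof). Assuming $\overline N=+\infty$, set $V_\ell:=\sum_{j>\ell}\haus^N(Z_j)\to 0$. Using the measure-prescribing deformation \cref{thm:VariazioniMaggi} one enlarges $Z_1$ to $\widetilde Z_1$ with $\haus^N(\widetilde Z_1)=\haus^N(Z_1)+V_\ell$ and $P(\widetilde Z_1)\le P(Z_1)+C\,V_\ell$. By \cref{prop:ProfileDecomposition} the family $\Omega,\widetilde Z_1,Z_2,\dots,Z_\ell$ is a competitor for $I_X(V)$, so $I_X(V)\le P(\Omega)+\sum_{j=1}^\ell P(Z_j)+C\,V_\ell$. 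On the other hand the already-established identity $I_X(V)=P(\Omega)+\sum_{j\ge 1}P(Z_j)$, together with $I_X(V_\ell)\le\sum_{j>\ell}P(Z_j)$ (from \eqref{eq:InfinityInequalityIsopProfile}), gives $I_X(V_\ell)\le C\,V_\ell$. For $V_\ell$ small this contradicts the small-volume inequality $I_X(V_\ell)\ge c\,V_\ell^{(N-1)/N}$ of \cref{prop:IsopVolumiPiccoli}. In short, finiteness of $\overline N$ does not follow from a volume lower bound on individual bubbles, but from comparing the linear cost $C\,V_\ell$ of a local deformation with the sublinear isoperimetric cost $c\,V_\ell^{(N-1)/N}$ of the tail.

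A minor point: you assert $P(\Omega^d_{i,j})\to P(Z_j)$ as a direct output of the generalized compactness theorem. In \cref{thm:GeneralizedCompactness} the continuity of perimeters is proved only under the assumption that the $E_i$'s are isoperimetric, which the pieces $\Omega^d_{i,j}$ are not. You correctly indicate the remedy later (the gluing competitor argument, using minimality of the original sequence $\Omega_i$), and this is exactly how the paper proceeds; just be aware that this convergence is a conclusion of that argument, not an input to it.
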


\cref{thm:MassDecompositionINTRO} states a general behavior for minimizing sequences of the isoperimetric problem. Roughly speaking, the mass of a sequence splits into at most finitely many pieces and it is totally recovered by finitely many isoperimetric regions sitting in spaces ``located at infinity'' with respect the original ambient space. Notice that \cref{thm:MassDecompositionINTRO} is not an existence theorem, nor it is a nonexistence result, instead it is a general tool for treating the problem by direct method. With such theorem it is then possible to recover the main existence and nonexistence results previously proved in \cite{MondinoNardulli16, AFP21, AntonelliBrueFogagnoloPozzetta2021}, and, actually, to suitably extend those to the nonsmooth $\RCD$ setting. We also mention that, taking into account \cref{prop:ProfileDecomposition} below, \eqref{eq:UguaglianzeIntro} can be thought as a generalized existence result of isoperimetric regions.

\medskip

The asymptotic mass decomposition result above actually follows from the following new result of generalized compactness of sequences of sets with uniformly bounded volume and perimeter. Notice that a generalized compactness result like the following one has been proved in \cite{FloresNardulliCompactness} in the Riemannian setting.

\begin{theorem}[Generalized compactness]\label{thm:GeneralizedCompactness}
Let $K\in \R$ and $N\geq 2$.
Let $(X_i,\dist_i,\mathcal{H}^N)$ be a sequence of $\RCD(K,N)$ spaces, and let $E_i \subset X_i$ be bounded sets of finite perimeter such that $\sup_i P(E_i)+ \haus^N(E_i)<+\infty$. Assume there exists $v_0>0$ such that $\mathcal{H}^N(B_1(x))\geq v_0$ for every $x\in X_i$, and for every $i$.

Then, up to subsequence, there exist a nondecreasing, possibly unboundend, sequence $\{N_i\}_{i\in\mathbb N}\subseteq \mathbb N_{\ge 1}$, points $p_{i,j} \in X_i$, with $1\le j\le N_i$ for any $i$, and pairwise disjoint subsets $E_{i,j}\subset E_i$ such that
\begin{itemize}
    \item $\lim_{i} \dist_i(p_{i,j},p_{i,\ell}) =+\infty$, for any $j\neq \ell<\overline N+1$, where $\overline N:=\lim_i N_i \in \N\cup\{+\infty\}$;
    
    \item For every $1\le j< \overline N+1$, the sequence $(X_i,\dist_i,\mathcal{H}^N,p_{i,j})$ converges in the pmGH sense to a pointed $\RCD(K,N)$ space $(Y_j,\dist_{Y_j},\mathcal{H}^N,p_j)$ as $i\to+\infty$;
    
    \item there exist sets $F_j\subset Y_j$ such that $E_{i,j}\to_i F_j$ in $L^1$-strong and there holds
    \begin{equation}\label{eq:GeneralizedCMPcontM}
        \lim_i \haus^N(E_i) = \sum_{j=1}^{\overline{N}} \haus^N(F_j),
    \end{equation}

    \begin{equation}\label{eq:GeneralizedCMPsemicontP}
        \sum_{j=1}^{\overline{N}} P(F_j) \leq \liminf_i P(E_i).
    \end{equation}
\end{itemize}
Moreover, if $E_i$ is an isoperimetric set in $X_i$ for any $i$, then $F_j$ is an isoperimetric set in $Y_j$ for any $j<\overline{N}+1$ and
\begin{equation}\label{eq:GeneralizedCMPcontP}
    P(F_j) = \lim_i P(E_{i,j}),
\end{equation}
for any $j<\overline{N}+1$.
\end{theorem}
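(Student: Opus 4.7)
The plan is to use an iterative concentration--compactness extraction. Consider the concentration function $q_i := \sup_{x\in X_i} \haus^N(E_i\cap B_1(x))$. If $\limsup_i q_i = 0$ we are in the vanishing case: covering $E_i$ with a bounded-overlap family of unit balls and invoking the relative isoperimetric inequality available on $\RCD(K,N)$ spaces (with constant depending only on $K,N,v_0$), one shows $\haus^N(E_i)\to 0$, and the theorem holds trivially with $\overline N=0$. Otherwise, choose $p_{i,1}\in X_i$ with $\haus^N(E_i\cap B_1(p_{i,1}))\geq q_i/2$. By Gromov precompactness for pointed $\RCD(K,N)$ spaces under the uniform lower bound on unit-ball volumes, up to a subsequence $(X_i,\dist_i,\haus^N,p_{i,1})$ pmGH-converges to a pointed $\RCD(K,N)$ space $(Y_1,\dist_{Y_1},\haus^N,p_1)$. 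The perimeter bound together with BV-compactness in pmGH-varying ambients (Ambrosio--Brué--Semola) produces a set $F_1\subset Y_1$ of finite perimeter such that $E_i\to F_1$ in $L^1_{\mathrm{loc}}$-strong, with $\haus^N(F_1\cap \overline{B_1(p_1)})\geq (\liminf_i q_i)/2>0$.

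Next, by a good-radii argument based on the coarea inequality, one selects $R_{i,1}\to +\infty$ so that $E_{i,1}:=E_i\cap B_{R_{i,1}}(p_{i,1})$ satisfies $E_{i,1}\to F_1$ in $L^1$-strong, $\haus^N(E_{i,1})\to\haus^N(F_1)$, and $\liminf_i P(E_{i,1})\geq P(F_1)$. Then the same extraction is applied to the residual sequence $E_i\setminus E_{i,1}$: the new base points $p_{i,2}$ automatically satisfy $\dist_i(p_{i,1},p_{i,2})\to+\infty$, because all the mass inside $B_{R_{i,1}}(p_{i,1})$ has been cut off. Iterating recursively and using that $\sum_j \haus^N(F_j)\leq \liminf_i \haus^N(E_i)<+\infty$, a Cantor diagonal argument produces simultaneously $L^1$-strong convergence for all the (at most countably many) pieces and a nondecreasing integer sequence $N_i$ with limit $\overline N\in\N\cup\{+\infty\}$ and pairwise disjoint $E_{i,j}\subset E_i$.

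The bound \eqref{eq:GeneralizedCMPsemicontP} follows from the asymptotic disjointness of the $E_{i,j}$ combined with the standard lower semicontinuity of the perimeter under pmGH plus $L^1$-strong convergence. The main obstacle is the mass balance \eqref{eq:GeneralizedCMPcontM}: I must exclude residual mass escaping at infinity after extraction. This is precisely where the vanishing lemma of the first paragraph re-enters: by maximality of the iterative choice of $p_{i,j}$, the concentration function of the residue $E_i\setminus\bigcup_{j\leq k} E_{i,j}$ tends to zero as $k\to\overline N$, forcing the residual $\haus^N$-mass to vanish. Finally, assume $E_i$ is isoperimetric in $X_i$ for every $i$. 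If some $F_{j_0}$ failed to be isoperimetric in $Y_{j_0}$, then a strictly better competitor $G\subset Y_{j_0}$ at the same volume could be lifted through the pmGH convergence to a competitor $\widetilde E_i\subset X_i$ built by replacing $E_{i,j_0}$ with a transplanted copy of $G$; the resulting small volume discrepancy is absorbed by compactly supported volume-fixing variations with controlled perimeter cost, which are available in $\RCD(K,N)$ spaces under the uniform unit-ball bound. This would give $P(\widetilde E_i)<P(E_i)$ for large $i$, a contradiction. Once every $F_j$ is isoperimetric, \eqref{eq:GeneralizedCMPcontP} follows by pairing \eqref{eq:GeneralizedCMPsemicontP} with the reverse inequality obtained by testing the isoperimetric optimality of $E_i$ against transplanted copies of the $F_j$'s.
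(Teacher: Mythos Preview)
Your proposal is correct and follows essentially the same concentration--compactness strategy as the paper: iteratively extract points of near-maximal concentration, rule out vanishing via the relative isoperimetric inequality combined with a bounded-overlap covering (this is exactly the paper's Lemma~2.6), cut at good radii chosen via coarea, pass to pmGH limits using Gromov precompactness and the $BV$-compactness of Ambrosio--Bru\'e--Semola, and treat the isoperimetric case by a transplant-and-volume-fix contradiction. The only cosmetic difference is that the paper packages the iteration through an explicit trichotomy lemma (Lemma~2.1) and works with \emph{bounded} cut-radii $R_j$ coming from the dichotomy alternative (with only the last radius diverging when $\overline N<+\infty$), whereas you let all cut-radii diverge via a diagonal argument; both bookkeepings lead to the same conclusions.
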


We notice that, in the first part of \cref{thm:GeneralizedCompactness}, no a priori minimality property is required on the sequence of sets $E_i$. \cref{thm:GeneralizedCompactness} is clearly mostly useful in case the spaces $X_i$ in the statement are noncompact. Nevertheless, the statement applies also in case the diameter of the $X_i$'s is uniformly bounded, in which case one trivially finds $N_i=\overline{N}=1$ for any $i$.

\medskip

In \cite{AntonelliPasqualettoPozzettaSemola}, the above results are crucially exploited to show useful properties of the isoperimetric profile of $\RCD(K,N)$ spaces, without assumptions on the existence of isoperimetric sets. Building on such properties we shall prove a more explicit upper bound on the number $\overline{N}$ in \cref{thm:MassDecompositionINTRO}, which turns out to be bounded from above linearly in terms of the volume $V$. More precisely, we obtain the next corollary.

\begin{corollary}\label{cor:LinearUpperBound}
Let $K\leq 0$, $N \geq 2$, and $v_0>0$. Then there exists $\eps=\eps(K,N,v_0)>0$ such that the following holds.

Let $(X,\dist,\mathcal{H}^N)$ be a noncompact $\RCD(K,N)$ space such that $\mathcal{H}^N(B_1(x))\geq v_0$ for every $x\in X$. Let $V>0$ and let $\Omega_i\subset X$ be a minimizing (for the perimeter) sequence of bounded sets of volume $V$. Letting $\overline{N}$ be given by \cref{thm:MassDecompositionINTRO}, then
\begin{equation*}
    \overline{N} \le 1 + \frac{V}{\eps}.
\end{equation*}
\end{corollary}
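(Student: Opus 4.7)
My plan is to deduce \cref{cor:LinearUpperBound} from two ingredients: the saturation of the subadditivity of the isoperimetric profile by the decomposition in \cref{thm:MassDecompositionINTRO}, and a quantitative strict subadditivity of $I_X$ at small volumes, provided by the companion paper \cite{AntonelliPasqualettoPozzettaSemola}. Granted these, at most one of the measures $\haus^N(\Omega), \haus^N(Z_1), \dots, \haus^N(Z_{\overline N})$ can be strictly smaller than a universal $\eps=\eps(K,N,v_0)>0$, and the volume identity in \eqref{eq:UguaglianzeIntro} then gives $(\overline N - 1)\eps \leq V$, i.e.\ $\overline N \leq 1 + V/\eps$.

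The saturation identity
\[
I_X(V) = I_X(\haus^N(\Omega)) + \sum_{j=1}^{\overline N} I_X(\haus^N(Z_j))
\]
is verified as follows. Since $\Omega$ is isoperimetric in $X$, one has $P(\Omega) = I_X(\haus^N(\Omega))$. Moreover $P(Z_j) = I_{Y_j}(\haus^N(Z_j)) \geq I_X(\haus^N(Z_j))$, the inequality $I_{Y_j} \geq I_X$ coming from the fact that any Caccioppoli set in $Y_j$ can be approximated in $\haus^N$-measure and in perimeter by Caccioppoli sets in $X$ placed near the diverging anchors $p_{i,j}$, a consequence of the pmGH convergence and of the stability machinery used to prove \cref{thm:GeneralizedCompactness}. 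This gives the $\geq$ direction; the reverse $\leq$ inequality is the subadditivity of $I_X$ in the noncompact ambient $X$, realised by placing nearly-optimal competitors of the prescribed volumes sufficiently far apart.

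From \cite{AntonelliPasqualettoPozzettaSemola} I would then invoke the existence of $\eps=\eps(K,N,v_0)>0$ such that $I_X(v_1+v_2) < I_X(v_1) + I_X(v_2)$ for all $0 < v_1, v_2 < \eps$, uniformly over the class of noncompact noncollapsed $\RCD(K,N)$ spaces. Such a quantitative strict subadditivity is the natural consequence of the almost-Euclidean expansion $I_X(v) \sim c_N v^{(N-1)/N}$ as $v \to 0^+$, available in noncollapsed $\RCD$ ambients through the sharp Brendle-type isoperimetric inequality, combined with the strict concavity of $v \mapsto v^{(N-1)/N}$. If two among $\haus^N(\Omega), \haus^N(Z_j)$ were strictly smaller than $\eps$, merging them in the saturation identity would strictly decrease its right-hand side, contradicting that identity. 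Hence at least $\overline N - 1$ of the $\haus^N(Z_j)$'s are bounded below by $\eps$, and the bound on $\overline N$ follows directly from \eqref{eq:UguaglianzeIntro}.

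The main obstacle is the companion-paper input, namely the \emph{uniform} quantitative strict subadditivity of $I_X$ at small volumes: naive pointwise volume lower bounds on isoperimetric regions already fail in $\R^N$, so the argument must exploit the precise small-volume asymptotics of the profile rather than a pure minimality estimate. Once this input is in place, the proof reduces to the short algebraic comparison between the saturation identity and the strict subadditivity estimate sketched above, with no further technical work.
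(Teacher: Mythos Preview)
Your approach is essentially the paper's: invoke the uniform strict subadditivity of the profile on $(0,2\eps)$ from \cite[Proposition~4.15]{AntonelliPasqualettoPozzettaSemola}, conclude that at most one of the pieces $\Omega,Z_1,\dots,Z_{\overline N}$ can have measure below $\eps$, and read off the bound from the volume identity in \eqref{eq:UguaglianzeIntro}.

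The one substantive difference is in how the contradiction is closed. You first pass to the saturation identity $I_X(V)=I_X(\haus^N(\Omega))+\sum_j I_X(\haus^N(Z_j))$, whose $\le$ direction is plain subadditivity of $I_X$, and you justify it by ``placing nearly-optimal competitors far apart''. That heuristic is not rigorous in a noncompact space without a transitive isometry action: nearly-optimal competitors for the various $I_X(v_j)$ may all live in one privileged region of $X$, and there is no way to translate them apart. The paper avoids this issue entirely. It keeps the mixed identity $I_X(V)=\sum_{j\ge 0} I_{X_j}(\haus^N(Z_j))$ (with $X_0=X$, $Z_0=\Omega$) coming straight from \cref{thm:MassDecompositionINTRO}, drops only the two small terms to $I_X$ via the comparison $I_{X_j}\ge I_X$ of \cref{prop:ComparisonIsoperimetricProfile}, merges them by strict subadditivity of $I_X$, and then closes with \eqref{eq:InfinityInequalityIsopProfile}---which has exactly one $I_X$ term and the rest $I_{X_j}$---rather than with subadditivity of $I_X$. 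So your chain works if you retain the $I_{X_j}$'s and only convert the two small ones, using \eqref{eq:InfinityInequalityIsopProfile} at the last step instead of the unproved subadditivity.
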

\medskip

We further present the last result of this work. Employing \cref{thm:RitoreRosalesNonSmooth}, we shall give an equivalent condition for a minimizing sequence for the isoperimetric problem to converge to an isoperimetric set without losing mass at infinity, see \cref{thm:AbstractCriterionCompactness}. This abstract condition involves a new function $I^\infty_X$, which can be interpreted as the isoperimetric profile at infinity (see \cref{def:Isoprofileatinfinity}).

\begin{theorem}\label{thm:AbstractCriterionCompactness}
Let $K\leq 0$, and let $N\geq 2$. Let $(X,\dist,\mathcal{H}^N)$ be a noncompact $\RCD(K,N)$ space. Assume that the 
isoperimetric profile $I_X$ is continuous.

Let $V \in (0,\haus^N(X))$. Then the following are equivalent.
\begin{enumerate}
    \item $I_X(V) < I_X(V_1) + I^\infty_X(V_2)$ for all $V_1+V_2=V$ with $V_2 \in (0,V]$.
    
    \item For any sequence $\Omega_k \subset X$ such that $\haus^N(\Omega_k)\to V$ and $P(\Omega_k)\to I_X(V)$, there exists a subsequence converging in $L^1(X)$.
\end{enumerate}
\end{theorem}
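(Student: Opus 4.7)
The plan is to deduce $(1)\Rightarrow(2)$ from the asymptotic mass decomposition \cref{thm:MassDecompositionINTRO}, and $(2)\Rightarrow(1)$ by explicit construction of a non-convergent minimizing sequence that exhibits mass escaping to infinity.

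For $(1)\Rightarrow(2)$, given a minimizing sequence $\Omega_k$ with $\haus^N(\Omega_k)\to V$ and $P(\Omega_k)\to I_X(V)$, first reduce to the hypotheses of \cref{thm:MassDecompositionINTRO}: using continuity of $I_X$ together with adding or subtracting a small ball placed far away at negligible perimeter cost, we may assume $\haus^N(\Omega_k)=V$ exactly; a standard truncation then makes each $\Omega_k$ bounded. Applying \cref{thm:MassDecompositionINTRO}, one extracts, along a subsequence, a bounded part $\Omega_k^c\to\Omega\subset X$ and diverging chunks $\Omega_{k,j}^d\to Z_j$ living in pmGH limits of $X$ at divergent basepoints $p_{k,j}$, together with the mass and perimeter decompositions $V=\haus^N(\Omega)+\sum_j \haus^N(Z_j)$ and $I_X(V)=P(\Omega)+\sum_j P(Z_j)$. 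Set $V_1:=\haus^N(\Omega)$ and $V_2:=V-V_1$. By definition $P(\Omega)\ge I_X(V_1)$, and by the very definition of $I^\infty_X$ in \cref{def:Isoprofileatinfinity} (combined with a subadditivity/diagonal argument that concatenates the finitely many escapes $p_{k,j}$ into a single direction along which the total mass $V_2$ can be realized) one has $\sum_j P(Z_j)\ge I^\infty_X(V_2)$. If $V_2>0$, these two estimates combine to give $I_X(V)\ge I_X(V_1)+I^\infty_X(V_2)$, contradicting (1). Hence $V_2=0$, $\haus^N(\Omega)=V$, and since the volumes of $\Omega_k\setminus\Omega_k^c$ vanish, $\Omega_k\to\Omega$ in $L^1(X)$.

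For $(2)\Rightarrow(1)$, assume (1) fails, so $I_X(V)\ge I_X(V_1)+I^\infty_X(V_2)$ for some $V_1+V_2=V$ with $V_2>0$; the reverse inequality, which is a general Ritor\'e--Rosales-type subadditivity already furnished by \cref{thm:RitoreRosalesNonSmooth}, upgrades this to equality. Choose a bounded $F\subset X$ with $\haus^N(F)=V_1$ and $P(F)$ close to $I_X(V_1)$. By the definition of $I^\infty_X$, together with the $L^1$-strong approximation theory of sets in pmGH limits by sets in the original spaces (\cref{def:L1strong}), find basepoints $p_k\in X$ with $\dist(p_k,o)\to\infty$ and bounded sets $G_k\subset X$ concentrated near $p_k$ with $\haus^N(G_k)=V_2$ and $P(G_k)\to I^\infty_X(V_2)$. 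For $k$ large $F$ and $G_k$ are disjoint, so $\Omega_k:=F\cup G_k$ has volume $V$ and perimeter satisfying
\[
I_X(V)\le P(\Omega_k)=P(F)+P(G_k)\longrightarrow I_X(V_1)+I^\infty_X(V_2)=I_X(V),
\]
so $\Omega_k$ is minimizing; but $\Omega_k\cap B_R(o)$ stabilizes to $F$ of volume $V_1<V$ for every fixed $R$, so no $L^1(X)$-subsequence of $\Omega_k$ can converge, contradicting (2).

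The main obstacle is the two-sided interaction with $I^\infty_X$: the forward direction needs the inequality $\sum_j P(Z_j)\ge I^\infty_X(V_2)$ when the pieces $Z_j$ a priori live in different pmGH limits at distinct escape directions, and the reverse direction needs $I^\infty_X(V_2)$ to be approachable by perimeters of sets actually sitting in $X$ itself at diverging basepoints. Both facts must be read off the precise definition in \cref{def:Isoprofileatinfinity} and the pmGH/$L^1$-strong approximation machinery already in place in the paper; the continuity of $I_X$ enters only as a technical lubricant in the volume-fixing reduction of the forward implication.
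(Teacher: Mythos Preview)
Your argument for $(1)\Rightarrow(2)$ has a genuine gap. You invoke \cref{thm:MassDecompositionINTRO}, but that theorem carries the standing hypothesis $\inf_{x\in X}\haus^N(B_1(x))\ge v_0>0$, which is \emph{not} assumed in \cref{thm:AbstractCriterionCompactness}. So as written your proof does not establish the stated theorem. Even if you are willing to add that hypothesis, the step ``$\sum_j P(Z_j)\ge I^\infty_X(V_2)$'' is not free: the $Z_j$ live in different pmGH limits $X_j$, not in $X$, while $I^\infty_X$ is defined via sequences of sets \emph{in $X$} escaping every ball. Your ``concatenation into a single direction'' is exactly the content of the identity $I^\infty_X=I_{X^\infty}$ sketched in the final Remark, which again relies on the uniform volume lower bound and the $L^1$-strong approximation machinery; it is not a triviality to be absorbed in a parenthesis.

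The paper avoids both issues by taking a shorter route: it applies only the two-piece Ritor\'e--Rosales splitting (in the form \cref{thm:RitoreRosalesModificato}, which needs no volume lower bound) to get $\Omega_k^d=\Omega_k\setminus B_{r_k}(o)\subset X$ with $r_k\to\infty$. Since these diverging pieces already sit in $X$, the bound $\liminf_k P(\Omega_k^d)\ge I^\infty_X(V_2)$ is immediate from \cref{def:Isoprofileatinfinity}, and the contradiction with (1) follows in one line. There is no need for the full asymptotic mass decomposition, no pmGH limits, and no transfer between spaces.

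Your $(2)\Rightarrow(1)$ is essentially the paper's argument and is fine, but you overcomplicate it: by the very \cref{def:Isoprofileatinfinity} the competitors for $I^\infty_X(V_2)$ are already sets in $X$ escaping to infinity, so no ``$L^1$-strong approximation from pmGH limits'' is needed to produce the $G_k$. Also note that the definition gives $\haus^N(G_k)\to V_2$, not equality, and $P(F)$ only $\eps$-close to $I_X(V_1)$; the paper handles this by working with $\haus^N(F_j)\to V$ and using the assumed continuity of $I_X$ (this is precisely where that hypothesis is used), rather than fixing masses exactly.
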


Let us briefly comment on the statement above. 
If $K\in \R$, $N\geq 2$, and $(X,\dist,\mathcal{H}^N)$ is an $\RCD(K,N)$ space, we recall that the assumption that $I_X$ is continuous is equivalent to require that it is lower semicontinuous. Indeed, by \cite[Lemma 3.1]{AFP21}, the isoperimetric profile of these spaces is always upper semicontinuous.

Notice also that the item (1) in \cref{thm:AbstractCriterionCompactness} implies that for any sequence $\Omega_k\subset X$ such that $\mathcal{H}^N(\Omega_k)=V$ and $P(\Omega_k)\to I_X(V)$ there exists a subsequence converging in $L^1(X)$, even without assuming the continuity of the isoperimetric profile. The proof is done verbatim as in the proof of \cref{thm:AbstractCriterionCompactness}, by using \cref{thm:RitoreRosalesNonSmooth}.

Moreover, the latter implication holds on open subsets of complete Riemannian manifolds because its proof only relies on Ritoré--Rosales Theorem, cf. \cite[Theorem 2.1]{RitRosales04} (see also the discussion in \cref{rem:PerimetriAZero}).

We also observe that Item (2) (or, equivalently, (1)) in \cref{thm:AbstractCriterionCompactness} is a much stronger property than just existence of isoperimetric regions. Such property clearly does not hold on $\mathbb R^N$, or in general on noncompact spaces endowed with some strong homogeneity structure, like simply connected models of constant sectional curvature $K\leq 0$. However, in presence of a homogeneity structure on the space, one expects to be able to apply \cref{thm:MassDecompositionINTRO} much more directly and that \cref{thm:AbstractCriterionCompactness} is not needed (see also \cite{GalliRitore, NovagaStepanov21}).

On the other hand, one expects that \cref{thm:AbstractCriterionCompactness} is applicable on spaces with no homogeneity invariance satisfying some asymptotic assumptions. For instance, it follows from \cref{lem:DisugProfiliInfinito} that if $I_X$ is strictly subadditive on $(0,V]$, then item (1) is satisfied, and thus by \cref{thm:AbstractCriterionCompactness} existence of isoperimetric regions of volume $V$ holds. After \cite{AntonelliPasqualettoPozzettaSemola}, this reasoning applies, for example, to every noncompact $\RCD(0,N)$ space $(X,\dist,\haus^N)$ (different from $\R^N$) that is Gromov--Hausdorff asymptotic to $\R^N$ at infinity. This also extends to the nonsmooth setting some existence theorems in \cite{AFP21}.

\medskip

We conclude this introduction by mentioning some applications in relation with the existing literature.

Understanding existence or nonexistence of isoperimetric regions in relation with the geometric properties of an ambient space is certainly an intriguing problem in its own.
As mentioned above, existence issues in Euclidean solid cones have been investigated in \cite{RitRosales04}. In more general convex bodies, it is treated in \cite{LeonardiRitore}.
The existence of isoperimetric sets on Riemannian manifolds $(M^n, g)$ with compact quotient $M/\mathrm{Iso}(M^n)$ has been pointed out by Morgan \cite[Chapter 3]{MorganBook}, building also on \cite{AlmgrenBook}.
On nonnegatively curved surfaces, a complete positive answer to the existence of isoperimetric sets has been given in \cite{RitoreExistenceSurfaces01}. The existence of isoperimetric sets in $3$-manifolds with nonnegative scalar curvature and asymptotically flat asymptotics has been established in \cite{Carlotto2016}. Existence results for isoperimetric sets of large volumes in asymptotically flat manifolds were also obtained in \cite{EichmairMetzger, EichmairMetzger2, NardulliFloresCAG20}, in asymptotically hyperbolic spaces in \cite{Chodosh2016}, and in the asymptotically conical case in \cite{ChodoshEichmairVolkmann17}. When the ambient space is a nonnegatively Ricci curved cone, isoperimetric regions exist for any given volume and are actually characterized \cite{MorganRitore02}. Recently, the isoperimetric problem for large volumes in nonnegatively Ricci curved manifolds has been studied in \cite{AntonelliBrueFogagnoloPozzetta2021}.
As discussed more diffusely in \cite{AFP21}, it is possible to recover several of the above mentioned existence result by employing a tool like \cref{thm:MassDecompositionINTRO}.

On the other hand, existence issues for the isoperimetric problem are also related to the other important questions on the geometry of the ambient where the problem is set. For instance, the problem is related to the existence of foliations by constant mean curvature hypersurfaces in the end of a manifold \cite{Chodosh2016, ChodoshEichmairVolkmann17}. Moreover, the existence of isoperimetric regions has been crucial for the derivation of differential properties of the isoperimetric profile \cite{BavardPansu86, MorganJohnson00, Bayle03, Bayle04, BayleRosales}. As shown in \cite{AntonelliPasqualettoPozzettaSemola}, \cref{thm:MassDecompositionINTRO} is a crucial ingredient for proving such properties without assuming existence of isoperimetric regions, as well as, for deriving isoperimetric inequalities and other geometric functional inequalities like the ones contained in \cite{AgostinianiFogagnoloMazzieri, BaloghKristaly, BrendleSobolev21}.

\medskip

\textbf{Organization.} In \cref{sec:Preliminaries} we show a concentration-compactness criterion in arbitrary metric measure spaces, see \cref{lem:CoCo}, which will be useful for the proof of \cref{thm:GeneralizedCompactness}. In addition we discuss basic properties of BV functions, sets of finite perimeter, and $\CD$ and $\RCD$ spaces. We discuss the relative isoperimetric inequality, together with some consequences. After the discussion of convergence and stability properties of finite perimeter sets along converging sequences of $\RCD$ spaces, we conclude the section by proving the local H\"older continuity of the isoperimetric profile on $\RCD(K,N)$ spaces with a uniform bound from below on the volume of unit balls, see \cref{lem:ProfileHolder}.\\ In \cref{sec:RitRos} we prove the first main mass splitting result for minimizing sequences, see \cref{thm:RitoreRosalesNonSmooth}, which extends to our setting
the one in \cite{RitRosales04}.
\\ In \cref{sec:MAIN} we prove the main results of the paper \cref{thm:GeneralizedCompactness}, and \cref{thm:MassDecompositionINTRO}.\\
In \cref{sec:Equivalent} we prove \cref{thm:AbstractCriterionCompactness}.

\medskip
\textbf{Acknowledgements.} The first author is partially supported by the European Research Council (ERC Starting Grant 713998 GeoMeG `\emph{Geometry of Metric Groups}'). The second author is partially sponsored by ``Jovens Pesquisadores em Centros Emergentes" (JP-FAPESP, 21/05256-0), Brazil.

\section{Preliminaries and auxiliary results}\label{sec:Preliminaries}

We recall that a {\em metric measure space, $\mathrm{m.m.s.}$ for short,} $(X,\dist,\mathfrak{m})$ is a triple where $(X,\dist)$ is a locally compact separable metric space and $\mathfrak{m}$ is a nonnegative Borel measure bounded on bounded sets. A {\em pointed metric measure space} is a quadruple $(X,\dist,\mathfrak{m},x)$ where $(X,\dist,\mathfrak{m})$ is a metric measure space and $x\in X$ is a point. For simplicity, and since it will always be our case, we will always assume that given $(X,\dist,\meas)$ a m.m.s.\! the support ${\rm spt}\,\meas$ of the measure $\meas$ is the whole $X$. We denote with $B_r(x)$ the open ball of radius $r$ and center $x\in X$.

\subsection{Concentration-compactness on sequences of metric measure spaces}

The following version of the concentration-compactness theorem is a slight improvement of \cite[Lemma 4.3]{AFP21} (cf. also \cite[Lemma 2.1]{Nar14}) but it is stated in the broad generality of metric measure spaces.

\begin{lemma}[Concentration-compactness]\label{lem:CoCo}
Let $(X_i,\dist_i,\meas_i)$ be a sequence of metric measure spaces. Let $E_i\subset X_i$ be a sequence of bounded measurable sets such that $\lim_i \meas_i(E_i) = W \in(0,+\infty)$. Then, up to passing to a subsequence, exactly one of the following alternatives occur.
\begin{enumerate}
    \item\label{it:COCOvanishing} For any $R>0$ it holds
    \[
    \lim_i \sup_{p\in X_i} \meas_i(E_i \cap B_R(p)) =0.
    \]
    
    \item\label{it:COCOCompactness} There exists a sequence of points $p_i \in X_i$ such that for any $\eps\in(0,W/2)$ there exist $R\ge 1$, $i_\eps \in \N$ such that $|\meas_i(E_i \cap B_{R'}(p_i))-W| \leq \eps$ for any $i \ge i_\eps$, and any $R'\geq R$. Moreover, there is $I\in \N, r\ge 1$ such that $\meas_i(E_i \cap B_r(p_i)) \ge \meas_i(E_i \cap B_r(q))$ for any $q \in X$ and $\meas_i(E_i \cap B_r(p_i)) > W/2$ for any $i \ge I$.

    \item \label{it:COCODicotomia}
    There exist $w\in(0,W)$ such that for any $\eps\in(0,w/2)$ there exist a sequence of points $p_i \in X_i$ and $R\ge1$, $i_\eps \in \N$, and a sequence of open sets
    \[
    U_i = X_i \setminus \overline{B}_{R_i}(p_i) \quad\text{for some $R_i\to + \infty$},
    \]
    such that
    \begin{equation}\label{eq:Dicotomia}
        \begin{split}
            |\meas_i(E_i \cap B_R(p_i)) - w | < \eps,& \\
            |\meas_i(E_i \cap U_i) - (W-w)| < \eps, &\\
             \meas_i(E_i \cap B_R(p_i)) \ge \meas_i(E_i \cap B_R(q))& \qquad \forall\,q \in X_i,
        \end{split}
    \end{equation}
    for every $i \ge i_\eps$.
\end{enumerate}
\end{lemma}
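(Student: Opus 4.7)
The plan is to adapt Lions' classical concentration-compactness dichotomy to the setting of varying metric measure spaces. The key object is the family of Lévy-type concentration functions
\[
Q_i(R) := \sup_{p \in X_i} \meas_i(E_i \cap B_R(p)), \qquad R > 0,
\]
each nondecreasing in $R$ and bounded above by $\meas_i(E_i) \to W$. A diagonal extraction along rational radii yields a subsequence along which $Q_i(R) \to Q(R)$ for every $R \in \Q_{>0}$; extending $Q$ to $(0,+\infty)$ by monotonicity, set $w := \lim_{R \to +\infty} Q(R) \in [0, W]$. The three alternatives of the statement correspond to $w = 0$ (vanishing), $w = W$ (compactness), and $w \in (0,W)$ (dichotomy).

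The vanishing case is immediate from $Q_i(R) \to 0$ for every rational $R$ and monotonicity. In the compactness case $w = W$, I would select, for each $k \in \N$, a rational $R_k$ with $Q(R_k) > W - 1/k$ and, for $i$ large, points $p_i^k \in X_i$ that nearly attain $Q_i(R_k)$. The crucial step is to reconcile these into a single sequence $\{p_i\}$: for $k' \ge k$ sufficiently large, the balls $B_{R_k}(p_i^k)$ and $B_{R_{k'}}(p_i^{k'})$ both carry more than half of $\meas_i(E_i)$, hence must intersect, giving $\dist_i(p_i^k, p_i^{k'}) \le R_k + R_{k'}$. A diagonal argument then yields $\{p_i\}$ such that, after enlarging radii appropriately, the concentration estimate in item (2) holds. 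The exact maximality clause for $p_i$ at some $r \ge 1$ is extracted from a semicontinuity argument for $p \mapsto \meas_i(E_i \cap B_r(p))$, whose effective domain is bounded since $E_i$ is.

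In the dichotomy case $0 < w < W$, given $\eps \in (0, w/2)$ I would pick a rational $R$ with $Q(R) > w - \eps/4$ and $p_i$ with $\meas_i(E_i \cap B_R(p_i)) > w - \eps/2$. Since $Q_i(R') \to Q(R') \le w$ for every rational $R' \ge R$, while $\meas_i(E_i) \to W > w$, a further diagonal extraction provides radii $R_i \to +\infty$ with $\meas_i(E_i \cap B_{R_i}(p_i)) \in (w - \eps, w + \eps)$; setting $U_i := X_i \setminus \overline{B}_{R_i}(p_i)$ then gives the first two bounds in \eqref{eq:Dicotomia}, while the third, maximality of $p_i$ at the inner radius $R$, is handled as in the compactness case.

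The main obstacle, compared with the classical argument on a fixed ambient space, is the need to synchronize the centers $p_i$ across the different spaces $X_i$; this is precisely what the half-mass intersection argument in the compactness step achieves, and the dichotomy case then piggybacks on the same selection. The attainment of the supremum by $p_i$ is the remaining technicality, resolved by a semicontinuity argument on the bounded region where the ball-measure function does not vanish.
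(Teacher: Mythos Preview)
Your plan is essentially the paper's own proof: define the concentration functions $Q_i$, extract a monotone limit $Q$ (the paper does this via $BV_{\rm loc}$ compactness of monotone bounded functions rather than a rational diagonal, but this is cosmetic), and split into the three cases according to $w=\lim_{R\to\infty}Q(R)$, using the half-mass intersection argument exactly as you describe.

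Two small remarks. In the compactness case $w=W$ the paper does \emph{not} diagonalize over the levels $k$: it fixes once and for all a radius $r$ with $Q(r)\ge\tfrac34 W$ and takes $p_i$ to be a maximizer of $p\mapsto\meas_i(E_i\cap B_r(p))$ at that single radius; then for each $\eps$ the auxiliary near-maximizer $p_i'$ at the larger radius $r'=r'(\eps)$ must satisfy $\dist_i(p_i,p_i')<r+r'$ by your half-mass argument, so $B_{r+2r'}(p_i)$ already carries mass $\ge W-\eps$. This gives the $\eps$-independent sequence $\{p_i\}$ directly and avoids your diagonal step. Second, your justification for attainment of the supremum is slightly off: with \emph{open} balls the map $p\mapsto\meas_i(E_i\cap B_r(p))$ is lower semicontinuous, not upper, so semicontinuity alone on a bounded domain does not yield a maximizer. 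One needs either properness of $X_i$ (so the bounded effective domain has compact closure and one can pass through closed balls, which do give upper semicontinuity), or else one works with near-maximizers throughout, as the paper itself indicates in the Remark following the lemma.
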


\begin{proof}
Define $Q_i(\rho)\eqdef \sup_{p\in X_i} \meas_i(E_i\cap B_\rho(p))$. The functions $Q_i:(0,+\infty)\to \R$ are nondecreasing and uniformly bounded, since $\meas_i(E_i)\to W$. Hence the sequence $Q_i$ is uniformly bounded in $BV_{\rm loc}(0,+\infty)$ and then, up to subsequence, there exists a nondecreasing function $Q \in BV_{\rm loc}(0,+\infty)$ such that $Q_i\to Q$ in $BV_{\rm loc}$ and pointwise almost everywhere. Also, let us pointwise define $Q(\rho)\eqdef \lim_{\eta\to0^+} {\rm ess}\inf_{(\rho-\eta,\rho)} Q$, so that $Q$ is defined at every $\rho \in (0,+\infty)$. Moreover, observe that $Q(\rho) \le W$ for any $\rho>0$. Now three disjoint cases can occur, distinguishing the cases enumerated in the statement.
\begin{enumerate}
\item We have that $\lim_{\rho\to+\infty} Q(\rho)=0$, and hence $Q\equiv 0$ since it is nondecreasing. Then item 1 of the statement clearly holds.

\item We have that $\lim_{\rho\to+\infty} Q(\rho)=W$. Then there is $r\ge1$ such that $\exists \lim_{i} \sup_p \meas_i(E_i \cap B_r(p)) = Q(r) \ge \tfrac34 W$. Let $p_i\in X_i$ such that $\sup_p \meas_i(E_i \cap B_r(p))=  \meas_i(E_i \cap B_r(p_i))$ for any $i$. We claim that the sequence $p_i$ satisfies the property in item 2. Indeed, let $\eps \in (0,W/2)$ be given. Arguing as above, since $\lim_{\rho\to+\infty} Q(\rho)=W$, there is a radius $r'>0$ and a sequence $p_i'\in X$ such that $\meas_i(E_i \cap B_{r'}(p_i') ) \ge W-\eps$, and $|\meas_i(E_i)-W|\leq \varepsilon$ for any $i\ge i_\eps$. Then $\dist_i(p_i,p_i') < r + r'$, for otherwise
\[
W \xleftarrow{} \meas_i(E_i) \ge \meas_i(E_i \cap B_r(p_i)) + \meas_i(E_i \cap B_{r'}(p_i')) ,
\]
and the right hand side is $> W$ for $i$ large enough. Hence taking $R=r+2r'$ we conclude that $\meas_i ( E_i \cap B_{R'}(p_i)) \geq \meas_i ( E_i \cap B_{R}(p_i)) \ge W- \eps$ for $i\ge i_\eps$. Moreover $\meas_i ( E_i \cap B_{R'}(p_i))\leq \meas_i(E_i)\leq W+\varepsilon$, and thus we get the sought claim.

\item We have that $\lim_{\rho\to+\infty} Q(\rho)=w\in(0,W)$. Then for given $\eps\in (0,w/2)$ there is $R\ge 1$ such that
\[
w-\frac\eps8 \le Q(R) = \lim_i \sup_p \meas_i(E_i \cap B_R(p)) = \lim_i \meas_i(E_i \cap B_R(p_i)),
\]
for some $p_i \in X_i$, where in the last equality we used that
\[
\sup_p \meas_i(E_i \cap B_R(p)) = \meas_i(E_i \cap B_R(p_i)),
\]
for some $p_i$ since $E_i$ is bounded. This implies that $\meas_i(E_i \cap B_R(p_i)) \ge \meas_i(E_i \cap B_R(q))$ for any $i$ and any $q \in X_i$, and there is $i_\eps$ such that $|\meas_i(E_i \cap B_R(p_i)) - w | < \eps/4$ for $i\ge i_\eps$.

For $i\ge i_\eps$, there is an increasing sequence $\rho_j\to+\infty$ such that $Q(\rho_j)=\lim_i Q_i(\rho_j)$ and we have
\begin{equation*}
    \begin{split}
        w & = \lim_{j\to+\infty} Q(\rho_j) = \lim_j \lim_i \sup_p \meas_i(E_i \cap B_{\rho_j}(p)) \ge \limsup_j \limsup_i \meas_i(E_i \cap B_{\rho_j}(p_i)) \\
        & = \limsup_j \limsup_i \left(\meas_i(E_i \cap B_{R}(p_i)) + \meas_i(E_i \cap B_{\rho_j}(p_i) \setminus B_R(p_i))\right) \\
        & \ge w- \frac\eps4 + \limsup_j \limsup_i  \meas_i(E_i \cap B_{\rho_j}(p_i) \setminus B_R(p_i)) .
    \end{split}
\end{equation*}
Then there is $j_0$ such that for any $j\ge j_0$ we have that $\rho_j>R$ and there is $i_j$, with $i_j$ increasing to $+\infty$ as $j\to+\infty$, that satisfies
\begin{equation}\label{eq:CCstima1}
\meas_i(E_i \cap B_{\rho_j}(p_i) \setminus B_R(p_i)) <\frac\eps2  \qquad \forall\, i \ge\max\{i_\eps,i_j\}.
\end{equation}
Hence define
\[
R_i \eqdef \rho_{\max\{j \st i \ge i_j\}}.
\]
In this way $\meas_i(E_i \cap B_{R_i}(p_i) \setminus B_R(p_i))  <\eps/2 $ for any $i\ge \max\{i_\eps,i_{j_0}\}$ by \eqref{eq:CCstima1}. Defining $U_i \eqdef X_i \setminus \overline{B}_{R_i}(p_i)$ we finally get
\[
\begin{split}
W \xleftarrow{} \meas_i(E_i) &= \meas_i(E_i \cap B_R(p_i)) + \meas_i( E_i \cap  B_{R_i}(p_i) \setminus B_R(p_i)) + \meas_i(E_i \cap U_i ) \\
&\le w + \frac34\eps + \meas_i(E_i \cap U_i ),
\end{split}
\]
for $i\ge \max\{i_\eps,i_{j_0}\}$. By the first line in the above identity, recalling that $|\meas_i(E_i\cap B_R(p_i))-w|<\varepsilon/4$, we also see that $\limsup_i \meas_i(E_i \cap U_i ) \le W-w+\epsilon/4$. Hence the proof of item 3 is completed renaming $\max\{i_\varepsilon,i_{j_0}\}$ into $i_\varepsilon$ and by eventually taking a slightly bigger $i$ in order to ensure the validity of the second inequality of item 3.
\end{enumerate}
\end{proof}

\begin{remark}
The assumption on $E_i$ bounded in \cref{lem:CoCo} is not crucial. It can be removed with the drawback that the points $p_i$ are quasi-maxima for $x\mapsto \meas_i(E_i\cap B_R(x))$.
\end{remark}

\subsection{Perimeter and isoperimetry on metric measure spaces and $\RCD$ spaces}

We now discuss the basic definitions and properties of the objects involved in the following.

\begin{definition}[Perimeter and isoperimetric profile]\label{def:BVperimetro}
Let $(X,\dist,\meas)$ be a metric measure space. A function $f \in L^1(X,\meas)$ is said to belong to the space of \emph{bounded variation functions} $BV(X,\dist,\meas)$ if there is a sequence $f_i \in {\rm Lip}_{\mathrm{loc}}(X)$ such that $f_i \to f$ in $L^1(X,\meas)$ and $\limsup_i \int_X \lip f_i \de \meas < +\infty$, where $\lip u (x) \eqdef \limsup_{y\to x} \frac{|u(y)-u(x)|}{\dist(x,y)}$ is the \emph{slope} of $u$ at $x$, for any accumulation point $x\in X$, and $\lip u(x):=0$ if $x\in X$ is isolated. In such a case we define
\[
|Df|(A) \eqdef \inf\left\{\liminf_i \int_A \lip f_i \de\meas \st \text{$f_i \in {\rm Lip}_{\rm loc}(A), f_i \to f $ in $L^1(A,\meas)$} \right\},
\]
for any open set $A\subset X$.

If $E\subset X$ is a Borel set and $A\subset X$ is open, we  define the \emph{perimeter $P(E,A)$  of $E$ in $A$} by
\[
P(E,A) \eqdef \inf\left\{\liminf_i \int_A \lip u_i \de\meas \st \text{$u_i \in {\rm Lip}_{\rm loc}(A), u_i \to \chi_E $ in $L^1_{\rm loc}(A,\meas)$} \right\}.
\]
We say that $E$ has \emph{finite perimeter} if $P(E,X)<+\infty$, and we denote by $P(E)\eqdef P(E,X)$. Let us remark that the set functions $|Df|, P(E,\cdot)$ above are restrictions to open sets of Borel measures that we denote by $|Df|, |D\chi_E|$ respectively, see \cite{AmbrosioDiMarino14}, and \cite{Miranda03}. 

The \emph{isoperimetric profile of $(X,\dist,\meas)$} is
\[
I_{X}(V)\eqdef \inf \left\{ P(E) \st \text{$E\subset X$ Borel, $\meas(E)=V$} \right\},
\]
for any $V\in[0,\meas(X))$. If $E\subset X$ is Borel with $\meas(E)=V$ and $P(E)=I_X(V)$, then we say that $E$ is an \emph{isoperimetric region}.
\end{definition}

\begin{definition}[PI space]\label{def:PI}
Let $(X,\dist,\meas)$ be a metric measure space.
 We say that $\meas$ is {\em uniformly locally doubling} if for every $R>0$ there exists $C_D>0$ such that the following holds 
 $$
 \meas(B_{2r}(x))\leq C_D\meas(B_r(x)), \qquad \forall x\in X\;\forall r\leq R.
 $$

We say that a {\em weak local $(1,1)$-Poincar\'{e} inequality} holds on $(X,\dist,\meas)$ if there exists $\lambda\geq 1$ such that for every $R>0$ there exists $C_P$ such that for every pair of functions $(f,g)$ where $f\in L^1_{\mathrm{loc}}(X,\meas)$, and $g$ is an upper gradient (cf. \cite[Section 10.2]{HajlaszKoskela}) of
$f$,
the following inequality holds
$$
\fint_{B_r(x)} |f-\overline f(x)|\de\meas \leq C_Pr\fint_{B_{\lambda r}(x)} g\de\meas, 
$$
for every $x\in X$ and $r\leq R$, where $\overline f(x):=\fint_{B_r(x)}f\de\meas$.

We say that $(X,\dist,\meas)$ is a {\em PI space} when $\meas$ is uniformly locally doubling and a weak local $(1,1)$-Poincar\'{e} inequality holds on $(X,\dist,\meas)$.
\end{definition}

Let us briefly introduce the so-called $\RCD$ condition for m.m.s., and discuss some basic and useful properties of it. Since we will use part of the $\RCD$ theory just as an instrument for our purposes and since we will never use in the paper the specific definition of $\RCD$ space, we just outline the main references on the subject and we refer the interested reader to the survey of Ambrosio \cite{AmbrosioSurvey} and the references therein. In this paper we adopt the definition of $\CD$ space as in \cite[Definition 5.4]{AmbrosioSurvey} and of $\RCD$ space as in \cite[Definition 7.4]{AmbrosioSurvey}.

After the introduction, in the independent works \cite{Sturm1,Sturm2} and \cite{LottVillani}, of the curvature dimension condition $\CD(K,N)$ encoding in a synthetic way the notion of Ricci curvature bounded from below by $K$ and dimension bounded above by $N$, the definition of $\RCD(K,N)$ m.m.s.\! was first proposed in \cite{GigliRCD} and then studied in \cite{Gigli13, ErbarKuwadaSturm15,AmbrosioMondinoSavare15}, see also \cite{CavallettiMilman16} for the equivalence between the $\RCD^*(K,N)$ and the $\RCD(K,N)$ condition. The infinite dimensional counterpart of this notion had been previously investigated in \cite{AmbrosioGigliSavare14}, see also \cite{AmbrosioGigliMondinoRajala15} for the case of $\sigma$-finite reference measures.

Due to the compatibility of the $\RCD$ condition with the smooth case of Riemannian manifolds with Ricci curvature bounded from below and to its stability with respect to pointed measured Gromov--Hausdorff convergence, limits of smooth Riemannian manifolds with Ricci curvature uniformly bounded from below by $K$ and dimension uniformly bounded from above by $N$ are $\RCD(K,N)$ spaces. Then the class of $\RCD$ spaces includes the class of Ricci limit spaces, i.e., limits of sequences of Riemannian manifolds with the same dimension and with Ricci curvature uniformly bounded from below \cite{ChCo0,ChCo1,ChCo2,ChCo3}.
An extension of non collapsed Ricci limit spaces is the class of $\RCD(K,N)$ spaces where the reference measure is the $N$-dimensional Hausdorff measure relative to the distance, introduced and studied in \cite{Kitabeppu17, DePhilippisGigli18, AntBruSem19}.

\begin{definition}\label{def:Ndimensional}
Given $K\in\mathbb R$ and $N\geq 1$, we call \emph{$N$-dimensional $\RCD(K,N)$ space} any metric measure space $(X,\dist,\haus^N)$ which is an $\RCD(K,N)$ space.
\end{definition}

We stress that we adopt a different terminology with respect to \cite{DePhilippisGigli18}, in which the spaces in \cref{def:Ndimensional} are called \emph{non collapsed $\RCD$ spaces}, $\mathrm{ncRCD}$ for short. This is to avoid confusion with the fact that usually in the literature a Riemannian manifold $M$ with volume measure $\mathrm{vol}$ is said to be non collapsed when $\inf_{x\in M}\vol(B_1(x))>0$.
As a consequence of the rectifiability of $\RCD$ spaces \cite{MondinoNaber, BruePasqualettoSemolaRectifiabilitySpace2020}, we remark that if $(X,\dist,\mathcal{H}^N)$ is an $\RCD(K,N)$ space then $N$ is an integer.

\begin{remark}\label{rem:BGBella}
If $(X,\dist,\meas)$ is a metric measure space where $\meas$ is uniformly locally doubling, then for every $R>0$ there exist constants $C_1=C_1(R)$, $s:=\log_2 C_D$, where $C_D$ is the doubling constant associated to $R$, such that 
$$
\frac{\meas(B_{r_2}(x))}{\meas(B_{r_1}(x))}\leq C_1\left(\frac{r_2}{r_1}\right)^{s},
$$
for every $x\in X$ and every $r_1\leq r_2\leq R$. This is a pretty standard observation coming from the iteration of the uniformly local doubling property, cf. \cite[Lemma 14.6]{HajlaszKoskela}.

If $(X,\dist,\meas)$ is $\CD(K,N)$, then the same holds with $s=N$ and $C_1=C_1(K,N,R)$.
\end{remark}

In the following remark we record the famous Bishop--Gromov comparison results.

\begin{remark}[Models of constant sectional curvature and Bishop--Gromov Comparison Theorem]\label{rem:PerimeterMMS2}
Let $N \in \N$ with $N\ge 2$ and $K \in \R$. 

We denote by $v(N,K,r)$ the volume of the ball of radius $r$ in the (unique up to isometry) simply connected Riemannian manifold of sectional curvature $K$ and dimension $N$, and by $s(N, K, r)$ the volume of the boundary of such a ball.

For an arbitrary $\CD((N-1)K,N)$ space $(X,\dist,\meas)$ the classical Bishop--Gromov volume comparison holds. More precisely, for a fixed $x\in X$, the function $\meas(B_r(x))/v(N,K,r)$ is nonincreasing in $r$ and the function $P(B_r(x))/s(N,K,r)$ is essentially nonincreasing in $r$, i.e., the inequality
\[
P(B_R(x))/s(N,K,R) \le P(B_r(x))/s(N,K,r),
\]
holds for almost every radii $R\ge r$, see \cite[Theorem 18.8, Equation (18.8), Proof of Theorem 30.11]{VillaniBook}. Moreover, it holds that 
\begin{equation}\label{eqn:BGPer}
P(B_r(x))/s(N, K, r)\leq \meas(B_r(x))/v(N,K,r),
\end{equation}
for any $r>0$, indeed the last inequality follows from the monotonicity of the volume and perimeter ratios together with the coarea formula on balls.

In addition, if $(X,\dist,\mathcal{H}^N)$ is an $\RCD((N-1)K,N)$ space, one can conclude that $\mathcal{H}^N$-almost every point has a unique measured Gromov--Hausdorff tangent isometric to $\mathbb R^N$ (\cite[Theorem 1.12]{DePhilippisGigli18}), and thus, from the volume convergence in \cite{DePhilippisGigli18}, we get
 \begin{equation}\label{eqn:VolumeConv}
 \lim_{r\to 0}\frac{\mathcal{H}^N(B_r(x))}{v(N,K,r)}=\lim_{r\to 0}\frac{\mathcal{H}^N(B_r(x))}{\omega_Nr^N}=1, \qquad \text{for $\mathcal{H}^N$-almost every $x$},
 \end{equation}
 where $\omega_N$ is the Euclidean volume of the Euclidean unit ball in $\mathbb R^N$.
 Moreover, since the density function $x\mapsto \lim_{r\to 0}\mathcal{H}^N(B_r(x))/\omega_Nr^N$ is lower semicontinuous (\cite[Lemma 2.2]{DePhilippisGigli18}), it is bounded above by the constant $1$. Hence, from the monotonicity at the beginning of the remark we deduce that, if $(X,\dist,\mathcal{H}^N)$ is an $\RCD((N-1)K,N)$ space, then for every $x\in X$ we have 
 \begin{equation}\label{eqn:BGVol}
 \mathcal{H}^N(B_r(x))\leq v(N,K,r),
 \end{equation}
 for every $r>0$. Furthermore, combining \eqref{eqn:BGVol} with \eqref{eqn:BGPer}, we also have that $P(B_r(x))\le s(N,K,r)$ for any $x \in X$ and $r>0$. If also $\haus^N(B_1(x))\ge v_0>0$ for any $x \in X$, then we have that if $\haus^N(B_{r_i}(x_i))\to0$ then $r_i\to0$ and $P(B_{r_i}(x_i))\to0$.
\end{remark}

We state a general covering lemma proved in \cite{AntonelliPasqualettoPozzetta}.

\begin{lemma}[{Covering Lemma, \cite[Lemma 3.7]{AntonelliPasqualettoPozzetta}}]\label{lem:CoveringLemma}
Let $(X,\dist,\meas)$ be a metric measure space where $\meas$ is uniformly locally doubling, let $\overline R>0$, and let $\rho\leq \overline R$.
\begin{enumerate}
    \item Let $C_1,s$ be the constants associated to the radius $\overline R$ as in \cref{rem:BGBella}. Hence, for any $\alpha>0$, $z \in X$, $\alpha \rho \le R \le \overline R$, it holds
    \begin{equation*}
        \sharp \mathscr{F}  \le C_1\left(\frac{2R}{\alpha\rho}\right)^{s},
    \end{equation*}
    for any family $\mathscr{F}$ of disjoint balls of radius $\alpha\rho$ contained in $B_R(z)$.
    
    \item If $\Omega \subset X$ is open and $D\subset \Omega$ is dense in $\Omega$, there exist countably many points $\{x_i\}_{i\in\N} \subset D$ such that
    \begin{equation}\label{eq:Covering}
    \begin{split}
        B_{\frac\rho2}(x_i) \cap B_{\frac\rho2}(x_j) = \emptyset& \qquad \forall\,i\neq j, \\
        \bigcup_i B_\rho(x_i) \supset D& , \\
        \bigcup_i B_{\lambda\rho}(x_i) \supset \Omega&
        \qquad \forall\,\lambda>1.
    \end{split}
    \end{equation}
    Moreover, for any $\beta\leq \overline R$ and $z \in \bigcup_i B_{\beta\rho}(x_i)$, it holds
    \begin{equation}\label{eq:StimaCoveringOverlap}
        \sharp \left\{ \text{balls $B_{\beta\rho}(x_i) \st z \in B_{\beta\rho}(x_i)$} \right\} \le \max\left\{ 1,C_1(8\beta)^{s} \right\},
    \end{equation}
    where $C_1,s$ are the constants as in \cref{rem:BGBella} associated to $2\overline R^2$. 
\end{enumerate}
If $(X,\dist,\meas)$ is $\CD(K,N)$, then the same holds with $s=N$.
\end{lemma}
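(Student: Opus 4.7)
\medskip

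\noindent\textbf{Proof proposal.} The plan is to prove the two items in order, with item (2) relying on item (1).

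For item (1), I would use the classical volume-packing argument based on the iterated doubling growth inequality recorded in \cref{rem:BGBella}. If $\mathscr F = \{B_{\alpha\rho}(y_k)\}_k$ is a disjoint family contained in $B_R(z)$, then each $y_k \in B_R(z)$ and $\alpha\rho\le R$ imply $B_R(z)\subset B_{2R}(y_k)$. Applying the polynomial growth estimate of \cref{rem:BGBella} with radii $\alpha\rho\le 2R\le 2\overline R$ gives
\[
\meas(B_R(z)) \;\le\; \meas(B_{2R}(y_k)) \;\le\; C_1 \left(\tfrac{2R}{\alpha\rho}\right)^s \meas(B_{\alpha\rho}(y_k)).
\]
Summing over $k$ and using the disjointness inclusion $\bigsqcup_k B_{\alpha\rho}(y_k)\subset B_R(z)$ yields the desired bound on $\sharp\mathscr F$.

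For the first three properties in item (2), I would extract via Zorn's lemma (or a greedy transfinite selection) a maximal family $\{B_{\rho/2}(x_i)\}_{i}$ of pairwise disjoint balls with centers in $D$, which is automatically countable because $(X,\dist)$ is separable and the centers are pairwise at distance at least $\rho$. Maximality immediately forces $D\subset\bigcup_i B_\rho(x_i)$: any $y\in D$ outside this union would satisfy $\dist(y,x_i)\ge\rho$ for every $i$, so $B_{\rho/2}(y)$ would extend the family, contradicting maximality. Density of $D$ in $\Omega$ upgrades the covering to $\Omega\subset\bigcup_i B_{\lambda\rho}(x_i)$ for every $\lambda>1$: given $z\in\Omega$ and $\lambda > 1$, pick $y\in D$ with $\dist(z,y)<(\lambda-1)\rho$ and then $x_i$ with $\dist(y,x_i)<\rho$, obtaining $\dist(z,x_i)<\lambda\rho$.

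For the overlap estimate, observe that if $z\in B_{\beta\rho}(x_{i_j})$ for a collection of indices $\{i_j\}$, then all the corresponding centers lie in $B_{\beta\rho}(z)$, so the pairwise disjoint balls $B_{\rho/2}(x_{i_j})$ are all contained in $B_{(\beta+1/2)\rho}(z)$. Invoking item (1) on this enclosing ball with balls of radius $\rho/2$ (that is, $\alpha=1/2$) bounds their number by $C_1(4\beta+2)^s$, which is $\le C_1(8\beta)^s$ as soon as $\beta\ge 1/2$; for smaller $\beta$ the trivial bound $1$ in the stated maximum takes over. All radii arising here remain controlled by the doubling constants associated to $2\overline R^2$ because $\rho,\beta\le\overline R$. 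The $\CD(K,N)$ refinement is immediate from \cref{rem:BGBella}, which supplies the explicit exponent $s=N$.

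The main obstacle I anticipate is the bookkeeping of constants in the overlap estimate: one must verify carefully that every radius arising in the application of item (1) falls in the range where \cref{rem:BGBella} furnishes uniform doubling constants, and that the degenerate regime $(4\beta+2)^s < 1/C_1$ is correctly absorbed by the trivial lower bound $1$ in the stated maximum. Everything else is a standard Vitali-type argument together with a volume comparison.
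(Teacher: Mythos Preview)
The paper does not prove this lemma; it is quoted verbatim from \cite[Lemma 3.7]{AntonelliPasqualettoPozzetta} and stated without proof. Your argument is the standard volume-packing plus maximal-family construction, and it is correct in outline and matches what one expects the original proof to be.

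One small correction is needed in your handling of the overlap estimate for $\beta<1/2$. The bound $C_1(4\beta+2)^s$ that you derive is in general strictly larger than $1$ in this range (take e.g.\ $\beta=0.4$), so you cannot simply say ``the trivial bound $1$ in the stated maximum takes over''; and the ``degenerate regime $(4\beta+2)^s<1/C_1$'' you mention in your final paragraph is not the relevant dichotomy. The right argument for $\beta<1/2$ is direct: if $z\in B_{\beta\rho}(x_i)\cap B_{\beta\rho}(x_j)$ with $i\neq j$, then $\dist(x_i,x_j)<2\beta\rho<\rho$, contradicting the disjointness of $B_{\rho/2}(x_i)$ and $B_{\rho/2}(x_j)$ (which forces $\dist(x_i,x_j)\ge\rho$). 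Hence at most one index $i$ can occur, and the overlap is exactly $1$. With this fix, and the observation you already made that all radii appearing in the application of item~(1) are bounded by $(\beta+\tfrac12)\rho\le 2\overline R^2$, the proof is complete.
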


\begin{remark}[Relative isoperimetric inequalities]\label{rem:RelativeIsoperimetric} Relative isoperimetric inequalities are today well understood in the following settings of metric measure spaces.

\begin{itemize}
    
    \item \textsc{PI spaces.} Let $(X,\dist,\meas)$ be a PI space. Let $\Omega\subset X$ be a bounded set, let $R>0$, and let $C_D, C_P$ be the corresponding doubling constant and Poincar\'{e} constant. Defining $s\eqdef \log_2 C_D$, there exists a constant $C_{\rm RI}'=C_{\rm RI}'(C_D,C_P, \lambda, R, \Omega)$ such that
    \begin{equation}\label{eq:RelativeIsopPI}
    \min\left\{ \meas(B_r(x)\cap E)^{1-\frac1s} , \meas(B_r(x)\setminus E)^{1-\frac1s}  \right\} \le C_{\rm RI}' P(E, B_{5\lambda r}(x)),
    \end{equation}
    for any $r\le R$ and $x \in \Omega$, for any set of locally finite perimeter $E$ in $X$.

    Moreover, if there exists $v_0>0$ such that $\meas(B_1(x))\geq v_0$ for every $x\in X$, then the constant $C_{\mathrm{RI}}'$ only depends on $C_D,C_P, \lambda,R,v_0$.

    \item \textsc{$\CD(K,N)$ spaces.} Let $(X,\dist,\meas)$ be an $\CD(K,N)$ space. Let $\Omega\subset X$ be a bounded set and let $R>0$. Then there is a constant $C_{\rm RI}= C_{\rm RI}(N,K,\Omega,R)$ such that
    \begin{equation}\label{eq:RelativeIsopRCD}
    \min\left\{ \meas(B_r(x)\cap E)^{1-\frac1N} , \meas(B_r(x)\setminus E)^{1-\frac1N}  \right\} \le C_{\rm RI} P(E, B_r(x)),
    \end{equation}
    for any $r\le R$ and $x \in \Omega$, for any set of locally finite perimeter $E$ in $X$. 

    Moreover, if there exists $v_0>0$ such that $\meas(B_1(x))\geq v_0$ for every $x\in X$, then the constant $C_{\mathrm{RI}}$ only depends on $N,K,R,v_0$.
\end{itemize}
\end{remark}

The next lemma essentially states that, in the suitable setting, it is always possible to find a ball where the mass of a set of finite perimeter concentrates. Such result will be properly used to rule out the occurrence of the first alternative in \cref{lem:CoCo}.

\begin{lemma}[Local mass lower bound]\label{lem:MasLowBound}
Let $(X,\dist,\meas)$ be a $\CD(K,N)$ space. Assume that $\meas(B_1(x))\ge v_0>0$ for any $x\in X$. Then there exists a constant $C_M=C_M(N,K,v_0)>0$ such that for any finite perimeter set $E$ with $\meas(E)\in(0,+\infty)$ there exists $x_0\in X$ such that
\[
\meas(E\cap B_1(x_0) ) \ge \min \left\{C_M \frac{\meas(E)^N}{P(E)^N}, \frac{v_0}{2} \right\}.
\]
\end{lemma}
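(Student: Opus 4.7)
The plan is to argue by contradiction: assume that for some set of finite perimeter $E$ with $\meas(E)\in(0,+\infty)$, the desired bound fails at every $x_0\in X$, i.e.,
\[
\meas(E\cap B_1(x))<\min\Bigl\{C_M\,\tfrac{\meas(E)^N}{P(E)^N},\ \tfrac{v_0}{2}\Bigr\}\qquad\text{for every }x\in X,
\]
where $C_M>0$ is a small constant to be fixed at the end in terms of $N,K,v_0$ alone. The strategy is to chain the relative isoperimetric inequality on unit balls from \cref{rem:RelativeIsoperimetric} with a uniformly locally finite covering of $X$ by unit balls coming from \cref{lem:CoveringLemma}, and to estimate the total measure of $E$ against itself.

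For every $x\in X$, the uniform lower bound $\meas(B_1(x))\geq v_0$ together with $\meas(E\cap B_1(x))<v_0/2$ gives $\meas(B_1(x)\setminus E)>\meas(E\cap B_1(x))$, so that the minimum in \eqref{eq:RelativeIsopRCD} with $r=1$ is attained by the $E$-term, yielding
\[
\meas(E\cap B_1(x))^{1-1/N}\leq C_{\mathrm{RI}}\,P(E,B_1(x)),
\]
with $C_{\mathrm{RI}}=C_{\mathrm{RI}}(N,K,v_0)$ by the uniformity statement in \cref{rem:RelativeIsoperimetric}. Multiplying by $\meas(E\cap B_1(x))^{1/N}\leq C_M^{1/N}\meas(E)/P(E)$ produces
\[
\meas(E\cap B_1(x))\leq C_{\mathrm{RI}}\,C_M^{1/N}\,\tfrac{\meas(E)}{P(E)}\,P(E,B_1(x)).
\]
Next, I would apply \cref{lem:CoveringLemma} with $\overline R=\rho=1$ and $\Omega=D=X$ to produce countably many $x_i\in X$ with $\bigcup_i B_1(x_i)=X$ and pointwise overlap of the family $\{B_1(x_i)\}$ bounded by a constant $C_O=C_O(N,K)$, via \eqref{eq:StimaCoveringOverlap} with $\beta=1$. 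Summing the previous inequality over $i$ and using both $\sum_i P(E,B_1(x_i))\leq C_O\,P(E)$ (from the overlap bound applied to the Borel measure $|D\chi_E|$) and $\sum_i\meas(E\cap B_1(x_i))\geq\meas(E)$ (from the covering), I obtain
\[
\meas(E)\leq C_{\mathrm{RI}}\,C_M^{1/N}\,C_O\,\meas(E).
\]
Choosing $C_M\eqdef(2\,C_{\mathrm{RI}}\,C_O)^{-N}$ forces $\meas(E)\leq\tfrac12\meas(E)$, contradicting $\meas(E)>0$.

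The degenerate case $P(E)=0$ is handled in one line: the contradictory assumption forces the minimum to be $v_0/2$, so the relative isoperimetric inequality above yields $\meas(E\cap B_1(x))=0$ at every $x\in X$, and summing over the covering contradicts $\meas(E)>0$. I do not expect a serious conceptual obstacle here; the main point requiring care is the bookkeeping of constants, namely that both $C_{\mathrm{RI}}$ from \cref{rem:RelativeIsoperimetric} and $C_O$ from \cref{lem:CoveringLemma} depend only on $N,K,v_0$, which is transparent from the uniform hypothesis $\meas(B_1(x))\geq v_0$ and the $\CD(K,N)$ structure.
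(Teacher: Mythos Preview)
Your proof is correct and follows essentially the same approach as the paper: both combine the relative isoperimetric inequality on unit balls with a uniformly locally finite covering from \cref{lem:CoveringLemma}, and both exploit the splitting $\meas(E\cap B_1(x_i))=\meas(E\cap B_1(x_i))^{1/N}\cdot\meas(E\cap B_1(x_i))^{(N-1)/N}$. The only cosmetic difference is that the paper argues directly by picking an almost-maximizer $x_{i_0}$ among the covering centers and reading off the constant, while you package the same computation as a proof by contradiction with $C_M$ chosen at the end.
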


\begin{proof}
If there is $x_0 \in X$ such that $\meas(E \cap B_1(x_0)) \ge \tfrac12 \meas(B_1(x_0))$, then $\meas(E \cap B_1(x_0)) \ge v_0/2$. So we can assume instead that
\begin{equation}\label{eq:MLB1}
\meas(E \cap B_1(x)) < \frac12 \meas(B_1(x)) 
\qquad
\forall\, x\in X.
\end{equation}
We apply item (2) in \cref{lem:CoveringLemma} with $\Omega=D=X$ and $\beta= \rho =1$, which yields a covering $\{ B_1(x_i)\}_{i \in \N}$ of $X$. Let $i_0$ be such that
\[
L \eqdef \sup_{i\in\N} \meas(E \cap B_1(x_i))^{\frac1N} \le 2 \meas(E \cap B_1(x_{i_0}))^{\frac1N}.
\]
By \eqref{eq:MLB1} and \eqref{eq:RelativeIsopRCD} we get
\begin{equation}\label{eq:MLB2}
    \meas(E \cap B_1(x))^{\frac{N-1}{N}} \le C_{\rm RI} P(E,B_1(x))
    \qquad
\forall\, x \in X,
\end{equation}
where $C_{\rm RI}$ is the constant in \eqref{eq:RelativeIsopRCD} associated to $R=1$.
Therefore, using \eqref{eq:MLB2}, \eqref{eq:StimaCoveringOverlap}, and the choice of $i_0$, we estimate
\begin{equation}\label{eq:MLB3}
    \begin{split}
        \meas (E)
        & \le \sum_i \meas(E \cap B_1(x_i)) = \sum_i \meas(E \cap B_1(x_i))^{\frac1n} \meas(E \cap B_1(x_i))^{\frac{N-1}{N}} \\
        &\le L \sum_i  C_{\rm RI} P(E,B_1(x_i)) \le 2 C_{\rm RI}\max\{1, C_18^N\} P(E)  \meas(E \cap B_1(x_{i_0}))^{\frac1N},
    \end{split}
\end{equation}
where $C_1$ is as in \cref{rem:BGBella} with $R=1$ and depends on $K,N$ only. Hence \eqref{eq:MLB3} completes the proof.
\end{proof}

\begin{remark}\label{rem:ScaledMassLowBound}
\cref{lem:MasLowBound} clearly holds at any fixed scale. More precisely, under the same assumptions of \cref{lem:MasLowBound}, for any $R>0$ there exist constants $C_{1}(R,N,K,v_0)>0$ and $C_{2}(R,N,K)>0$ such that for any finite perimeter set $E$ with $\meas(E)\in(0,+\infty)$ for any $r \in (0,R]$ there exists $x_0\in X$ such that
\[
\meas(E\cap B_r(x_0) ) \ge \min \left\{C_{1}(R,N,K,v_0) \frac{\meas(E)^N}{P(E)^N}, C_{2}(R,N,K)v_0 r^N \right\}.
\]
\end{remark}

In the following treatment we introduce the pmGH-convergence already in a proper realization even if this is not the general definition. Nevertheless, the (simplified) definition of Gromov--Hausdorff convergence via a realization is equivalent to the standard definition of pmGH convergence in our setting, because in the applications we will always deal with locally uniformly doubling measures, see \cite[Theorem 3.15 and Section 3.5]{GigliMondinoSavare15}. The following definition is actually taken from the introductory exposition of \cite{AmborsioBrueSemola19}.

\begin{definition}[pGH and pmGH convergence]\label{def:GHconvergence}
A sequence $\{ (X_i, \dist_i, x_i) \}_{i\in \N}$ of pointed metric spaces is said to converge in the \emph{pointed Gromov--Hausdorff topology, in the $\mathrm{pGH}$ sense for short,} to a pointed metric space $ (Y, \dist_Y, y)$ if there exist a complete separable metric space $(Z, \dist_Z)$ and isometric embeddings
\[
\begin{split}
&\Psi_i:(X_i, \dist_i) \to (Z,\dist_Z), \qquad \forall\, i\in \N,\\
&\Psi:(Y, \dist_Y) \to (Z,\dist_Z),
\end{split}
\]
such that for any $\eps,R>0$ there is $i_0(\varepsilon,R)\in\mathbb N$ such that
\[
\Psi_i(B_R^{X_i}(x_i)) \subset \left[ \Psi(B_R^Y(y))\right]_\eps,
\qquad
\Psi(B_R^{Y}(y)) \subset \left[ \Psi_i(B_R^{X_i}(x_i))\right]_\eps,
\]
for any $i\ge i_0$, where $[A]_\eps\eqdef \{ z\in Z \st \dist_Z(z,A)\leq \eps\}$ for any $A \subset Z$.

Let $\meas_i$ and $\mu$ be given in such a way $(X_i,\dist_i,\meas_i,x_i)$ and $(Y,\dist_Y,\mu,y)$ are m.m.s.\! If in addition to the previous requirements we also have $(\Psi_i)_\sharp\mathfrak{m}_i \rightharpoonup \Psi_\sharp \mu$ with respect to duality with continuous bounded functions on $Z$ with bounded support, then the convergence is said to hold in the \emph{pointed measure Gromov--Hausdorff topology, or in the $\mathrm{pmGH}$ sense for short}.
\end{definition}

\begin{remark}[pmGH limit of $\RCD$ spaces]
	We recall that, whenever it exists, a pmGH limit of a sequence $\{(X_i,\dist_i,\meas_i,x_i)\}_{i\in\mathbb N}$ of (pointed) $\RCD(K,N)$ spaces is still an $\RCD(K,N)$ metric measure space.
\end{remark}

\begin{remark}[Gromov precompactness theorem for $\RCD$ spaces]\label{rem:GromovPrecompactness}
Here we recall the synthetic variant of Gromov's precompactness theorem for $\RCD$ spaces, see \cite[Equation (2.1)]{DePhilippisGigli18}. Let $\{(X_i,\dist_i,\meas_i,x_i)\}_{i\in\mathbb N}$ be a sequence of $\RCD(K_i,N)$ spaces with $N\in[1,+\infty)$, $\supp(\meas_i)=X_i$ for every $i\in\mathbb N$, $\meas_i(B_1(x_i))\in[v,v^{-1}]$ for some $v\in(0,1)$ and for every $i\in\mathbb N$, and $K_i\to K\in\mathbb R$. Then there exists a subsequence pmGH-converging to some $\RCD(K,N)$ space $(X,\dist,\meas,x)$ with $\supp(\meas)=X$.
\end{remark}

We need to recall a generalized $L^1$-notion of convergence for sets defined on a sequence of metric measure spaces converging in the pmGH sense. Such a definition is given in \cite[Definition 3.1]{AmborsioBrueSemola19}, and it is investigated in \cite{AmborsioBrueSemola19} capitalizing on the results in \cite{AmbrosioHonda17}.

\begin{definition}[$L^1$-strong and $L^1_{\mathrm{loc}}$ convergence]\label{def:L1strong}
Let $\{ (X_i, \dist_i, \mathfrak{m}_i, x_i) \}_{i\in \N}$  be a sequence of pointed metric measure spaces converging in the pmGH sense to a pointed metric measure space $ (Y, \dist_Y, \mu, y)$ and let $(Z,\dist_Z)$ be a realization as in \cref{def:GHconvergence}.

We say that a sequence of Borel sets $E_i\subset X_i$ such that $\mathfrak{m}_i(E_i) < +\infty$ for any $i \in \N$ converges \emph{in the $L^1$-strong sense} to a Borel set $F\subset Y$ with $\mu(F) < +\infty$ if $\mathfrak{m}_i(E_i) \to \mu(F)$ and $\chi_{E_i}\mathfrak{m}_i \rightharpoonup \chi_F\mu$ with respect to the duality with continuous bounded functions with bounded support on $Z$.

We say that a sequence of Borel sets $E_i\subset X_i$ converges \emph{in the $L^1_{\mathrm{loc}}$-sense} to a Borel set $F\subset Y$ if $E_i\cap B_R(x_i)$ converges to $F\cap B_R(y)$ in $L^1$-strong for every $R>0$.
\end{definition}

The next statements collect precompactness, lower semicontinuity, and strong approximation results in the setting of pmGH converging ambient spaces.

\begin{proposition}[{\cite[Proposition 3.3, Corollary 3.4, Proposition 3.6, Proposition 3.8]{AmborsioBrueSemola19}}]\label{prop:SemicontinuitaAmbrosioBrueSemola}
Let $K\in\mathbb R$, $N\geq 1$, and $\{(X_i,\dist_i,\meas_i,x_i)\}_{i\in\mathbb N}$ be a sequence of $\RCD(K,N)$ m.m.s.\! converging in the pmGH sense to $(Y,\dist_Y,\mu,y_0)$. Then the following hold:
\begin{itemize}
    \item For any $r>0$ and for any sequence of finite perimeter sets $E_i\subset \overline B_{r}(x_i)$  satisfying
    $$
    \sup_{i\in\mathbb N}|D\chi_{E_i}|(X_i)<+\infty,
    $$
    there exists a subsequence $i_k$ and a finite perimeter set $F\subset \overline{B}_r(y_0)$ such that $E_{i_k}\to F$ in $L^1$-strong as $k\to+\infty$. Moreover 
    $$
    |D\chi_F|(Y)\leq \liminf_{k\to+\infty}|D\chi_{E_{i_k}}|(X_{i_k}).
    $$
    
    \item For any sequence of Borel sets $E_i\subset X_i$ with 
    $$
    \sup_{i\in\mathbb N}|D\chi_{E_i}|(B_R(x_i))<+\infty, \qquad \forall\,R>0,
    $$
    there exists a subsequence $i_k$ and a Borel set $F\subset Y$ such that $E_{i_k}\to F$ in $L^1_{\mathrm{loc}}$.
    
    \item Let $F\subset Y$ be a bounded set of finite perimeter. Then there exist a subsequence $i_k$, and uniformly bounded finite perimeter sets $E_{i_k}\subset X_{i_k}$ such that $E_{i_k}\to F$ in $L^1$-strong and $|D\chi_{E_{i_k}}|(X_{i_k})\to |D\chi_F|(Y)$ as $k\to+\infty$.
\end{itemize}
\end{proposition}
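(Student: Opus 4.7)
The plan is to work in a common realization $(Z,\dist_Z)$ provided by \cref{def:GHconvergence}, viewing each $E_i$ as a Borel subset of $Z$ via $\Psi_i$ carrying the finite measure $(\Psi_i)_\sharp(\chi_{E_i}\meas_i)$. The pmGH convergence gives $(\Psi_i)_\sharp\meas_i\rightharpoonup\Psi_\sharp\mu$ against compactly supported continuous bounded functions, and the $\RCD(K,N)$ structure yields doubling and Poincar\'e constants uniform in $i$ on bounded regions of $Z$ (via \cref{rem:BGBella} and \cref{rem:RelativeIsoperimetric}). All three items will then be obtained by transferring metric BV theory to $Z$ and exploiting these uniform bounds.

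For item (i), the uniform perimeter bound, the containment $E_i\subset\overline{B}_r(x_i)$, and the Bishop--Gromov volume estimate from \cref{rem:PerimeterMMS2} produce a uniform BV bound for the pushforwards $(\Psi_i)_\sharp(\chi_{E_i}\meas_i)$ in a fixed ball of $Z$. A Rellich--Kondrachov-type compactness for BV on uniformly doubling Poincar\'e spaces extracts a subsequence converging in $L^1$ to some $u\in BV$ with $u\in\{0,1\}$ $\Psi_\sharp\mu$-a.e.\ since the approximants are; setting $F:=\{u=1\}\subset\overline{B}_r(y_0)$ one obtains $\meas_{i_k}(E_{i_k})\to\mu(F)$ directly from $L^1$-convergence, hence $L^1$-strong convergence in the sense of \cref{def:L1strong}. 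Lower semicontinuity of perimeter under $L^1$-convergence, built into the relaxation \cref{def:BVperimetro}, then yields $|D\chi_F|(Y)\leq\liminf_k|D\chi_{E_{i_k}}|(X_{i_k})$.

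Item (ii) follows by applying item (i) to $E_i\cap\overline{B}_R(x_i)$ for each $R\in\mathbb N$, with the truncation perimeter controlled by the coarea formula, and diagonally extracting. For item (iii), given a bounded finite-perimeter set $F\subset Y$, the first step is to approximate $\chi_F$ in $L^1(Y,\mu)$ by compactly supported Lipschitz functions $g_m$ satisfying $\int_Y\lip g_m\,d\mu\to|D\chi_F|(Y)$, directly from \cref{def:BVperimetro}. Pushing each $g_m$ to $Z$ via $\Psi$, extending it by McShane with the same Lipschitz constant, and pulling back via $\Psi_i$ gives Lipschitz functions $g_{m,i}$ on $X_i$. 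Weak convergence $(\Psi_i)_\sharp\meas_i\rightharpoonup\Psi_\sharp\mu$, together with the uniform Lipschitz bound and the tightness inherited from compact support, yields $\int_{X_i}\lip g_{m,i}\,d\meas_i\to\int_Y\lip g_m\,d\mu$. Applying the coarea formula to $g_{m,i}$ and selecting a favourable level $t_m\in(0,1)$ by a Fatou argument produces sets $E_{m,i}:=\{g_{m,i}>t_m\}$ with $\limsup_iP(E_{m,i})\leq\int_Y\lip g_m\,d\mu$. A diagonal extraction over $m$ delivers $E_{i_k}\to F$ in $L^1$-strong with $\limsup_k|D\chi_{E_{i_k}}|\leq|D\chi_F|(Y)$, and combining with item (i) forces equality.

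The main obstacle is the absence of a single ambient BV theory: perimeters are defined intrinsically on each $X_i$, and one must transfer them faithfully through the realization $(Z,\dist_Z)$. The critical technical step is the BV-compactness in item (i), where uniform doubling and Poincar\'e constants across $i$ are required; these are furnished by the uniform $\RCD(K,N)$ hypothesis together with the ball containment, which bounds the total measure uniformly. A secondary subtlety in item (iii) is ensuring that McShane extension into $Z$ does not inflate slopes uncontrollably across the thin gaps left by the embeddings $\Psi_i$, which is handled because those gaps shrink to zero in the pmGH sense on every ball.
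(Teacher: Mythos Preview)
The paper does not prove this proposition: it is quoted verbatim from \cite[Proposition 3.3, Corollary 3.4, Proposition 3.6, Proposition 3.8]{AmborsioBrueSemola19} and no argument is given here. So there is no ``paper's own proof'' to compare against; the authors simply invoke the cited source.

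Regarding your sketch itself, the outline for items (i) and (ii) is in the right spirit, but note that what you call ``Rellich--Kondrachov-type compactness for BV'' must be applied with \emph{varying} reference measures $(\Psi_i)_\sharp\meas_i$ on $Z$, not a fixed one; this is not the classical theorem and is precisely the content of the compactness results developed in \cite{AmbrosioHonda17} and recalled in \cite{AmborsioBrueSemola19}. More seriously, item (iii) has a genuine gap. McShane extension preserves the global Lipschitz constant but \emph{not} the local slope $\lip$, so the step ``$\int_{X_i}\lip g_{m,i}\,\de\meas_i\to\int_Y\lip g_m\,\de\mu$'' does not follow from weak convergence of measures plus a Lipschitz bound: the slope of the extension at points of $\Psi_i(X_i)$ can be strictly larger than $\lip_Y g_m$ at nearby points of $\Psi(Y)$, and the pmGH shrinking of gaps controls only Hausdorff distances, not oscillation quotients. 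The actual argument in \cite{AmborsioBrueSemola19} relies on the Mosco convergence of Cheeger energies along pmGH-converging sequences of $\RCD(K,N)$ spaces (from \cite{GigliMondinoSavare15, AmbrosioHonda17}), which is what guarantees the existence of recovery sequences with converging energy; your McShane route does not produce this without that input.
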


In the case of $\RCD$ space with Hausdorff measure and lower bounds on the volume of unit balls, employing \cref{lem:MasLowBound} we can improve the last item in \cref{prop:SemicontinuitaAmbrosioBrueSemola} requiring approximation with constant mass.

\begin{lemma}\label{lem:ApproxInMassa}
Let $K\in\mathbb R$, $N\geq 1$, and $\{(X_i,\dist_i,\haus^N,x_i)\}_{i\in\mathbb N}$ be a sequence of $\RCD(K,N)$ m.m.s.\! converging in the pmGH sense to $(Y,\dist_Y,\haus^N,y_0)$. Assume that $\haus^N(B_1(y))\ge v_0>0$ for any $y \in Y$.

Let $F\subset Y$ be a bounded set of finite perimeter. Then there exist a subsequence $i_k$ and uniformly bounded finite perimeter sets $F_{i_k}\subset X_{i_k}$ such that
\begin{equation}\label{eq:ApproxInMassa}
    \begin{split}
        F_{i_k}\to F  \quad \text{$L^1$-strong,} \qquad
        P(F_{i_k}) \to P(F), \qquad
        \haus^N(F_{i_k}) = \haus^N(F) \quad\forall k.
    \end{split}
\end{equation}
\end{lemma}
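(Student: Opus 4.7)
My plan is to construct $F_{i_k}$ as a small measure adjustment of the sequence given by the third item of \cref{prop:SemicontinuitaAmbrosioBrueSemola}. That result furnishes uniformly bounded finite perimeter sets $E_{i_k}\subset X_{i_k}$ with $E_{i_k}\to F$ in $L^1$-strong and $|D\chi_{E_{i_k}}|(X_{i_k})\to |D\chi_F|(Y)$, so in particular the mass defect $v_k \eqdef \haus^N(F)-\haus^N(E_{i_k})$ tends to zero. The only point missing with respect to the desired conclusion is the exact equality $\haus^N(F_{i_k})=\haus^N(F)$.

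I would split according to the sign of $v_k$. When $v_k\ge 0$ I would pick $p_k\in X_{i_k}$ at distance larger than $2$ from $E_{i_k}$, which is possible because the $E_{i_k}$ are uniformly bounded while the uniform bound $\haus^N(B_1(\cdot))\ge v_0$ on $Y$ transfers to $X_{i_k}$ on bounded regions through pmGH volume convergence, and set $F_{i_k}\eqdef E_{i_k}\sqcup B_{r_k}(p_k)$ for an appropriate small $r_k>0$. When $v_k<0$ I would apply the scaled mass lower bound of \cref{rem:ScaledMassLowBound} to the uniformly bounded sequence $E_{i_k}$ to produce a point $q_k\in X_{i_k}$ and a radius $R>0$ such that $\haus^N(E_{i_k}\cap B_R(q_k))$ is bounded below by a positive constant, and set $F_{i_k}\eqdef E_{i_k}\setminus B_{r_k}(q_k)$ for some $r_k\in(0,R]$.

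Since $|v_k|\to 0$, the Bishop--Gromov upper bound $\haus^N(B_r(\cdot))\le v(N,K,r)$ from \cref{rem:PerimeterMMS2} forces $r_k\to 0$, and then the companion perimeter bound $P(B_{r_k}(\cdot))\le s(N,K,r_k)\to 0$ from the same remark ensures that the perimeter contribution of the modification vanishes. Combined with the vanishing of the symmetric difference $\haus^N(F_{i_k}\triangle E_{i_k})\le |v_k|\to 0$, this gives $P(F_{i_k})\to P(F)$ and preserves the $L^1$-strong convergence, yielding the three conclusions in \eqref{eq:ApproxInMassa}.

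The delicate step is the exact equality $\haus^N(B_{r_k}(p_k))=v_k$ (and the analogous one in the subtractive case), since the map $r\mapsto\haus^N(B_r(p))$ is only nondecreasing and can have at most countably many jumps, corresponding to spheres of positive $\haus^N$-measure, so its range need not contain $v_k$. I would bypass this either by a generic choice of the reference point $p_k$ (the set of radii at which a positive-measure sphere sits around a fixed centre is at most countable, and a small perturbation of $p_k$ makes the distribution function continuous, so that the intermediate value theorem applies), or by replacing the ball with the superlevel set of a Lipschitz function built from the distances to finitely many well-separated reference points, whose distribution function can be made absolutely continuous via a coarea argument. In all cases the Bishop--Gromov-type bounds still control the perimeter of the modification, yielding the conclusion.
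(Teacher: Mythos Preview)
Your approach follows the paper's strategy, but there is a genuine gap in the subtractive case $v_k<0$. You fix $q_k$ via \cref{rem:ScaledMassLowBound} at a \emph{fixed} scale $R$, obtaining $\haus^N(E_{i_k}\cap B_R(q_k))\ge c>0$, and then choose $r_k\in(0,R]$ with $\haus^N(E_{i_k}\cap B_{r_k}(q_k))=|v_k|$. You then assert that Bishop--Gromov forces $r_k\to0$; but Bishop--Gromov bounds $\haus^N(B_{r_k}(q_k))$, not $\haus^N(E_{i_k}\cap B_{r_k}(q_k))$, and once $q_k$ has been fixed at scale $R$ there is no lower bound on the latter in terms of $r_k$ (the point produced by \cref{rem:ScaledMassLowBound} depends on the scale). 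So $r_k$ may well stay bounded away from zero while $|v_k|\to0$, and then $P(B_{r_k}(q_k))\to0$ fails and your perimeter argument collapses.

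The paper meets exactly this obstacle and resolves it by a case split: when $\liminf_k r_k\ge r_0>0$ it does not try to make the removed ball small, but instead estimates $P(E_{i_k}\setminus B_{r_k}(q_k))$ directly via the Deformation Lemma \cite[Theorem~1.1]{AntonelliPasqualettoPozzetta}, which yields an inequality of the form $P(E\setminus B_r(z))\le P(E)+\tfrac{C_{K,N}}{r}\,\haus^N(E\cap B_r(z))$; since $\haus^N(E_{i_k}\cap B_{r_k}(q_k))=|v_k|\to0$ and $r_k\ge r_0$, the correction term vanishes and $P(F_{i_k})\to P(F)$ follows. (A repair closer to your sketch would be to apply \cref{rem:ScaledMassLowBound} at the $k$-dependent scale $\rho_k$ determined by $C_2 v_0\rho_k^N\sim|v_k|$, so that the resulting $r_k\le\rho_k\to0$; but this is not what you wrote.) As an aside, your ``delicate step'' worry is unnecessary in this setting: the coarea formula for $\dist(\cdot,p)$ gives $\haus^N(B_r(p))=\int_0^r P(B_t(p))\de t$ with $P(B_t(p))\le s(N,K/(N-1),t)$ locally bounded, so $r\mapsto\haus^N(B_r(p))$ and $r\mapsto\haus^N(E\cap B_r(p))$ are continuous and the intermediate value theorem applies directly.
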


\begin{proof}
We can assume that $P(F)>0$ without loss of generality. By the last item in \cref{prop:SemicontinuitaAmbrosioBrueSemola} there exist a subsequence $i_k$, and uniformly bounded finite perimeter sets $E_{i_k}\subset X_{i_k}$ such that $E_{i_k}\to F$ in $L^1$-strong and $P(E_{i_k}) \to P(F)$ as $k\to+\infty$.
By pmGH convergence we know that $\haus^N(B_1(x))\ge v_0$ for any $x \in X_{i_k}$ and any $k$. By \cref{lem:MasLowBound} there exist a constant $C_M=C_M(N,K,v_0)>0$ and a sequence of points $z_{i,k} \in X_{i_k}$ such that
\[
\haus^N(E_{i_k}\cap B_1(z_{i_k}) ) \ge \min \left\{C_M \frac{\haus^N(E_{i_k})^N}{P(E_{i_k})^N}, \frac{v_0}{2} \right\},
\]
and then $\haus^N(E_{i_k}\cap B_1(z_{i_k}) ) \ge C_0=C_0(C_M,F,v_0)>0$ for any $k$ sufficiently large.
On the other hand, by the uniform boundedness of $E_{i_k}$, we can also find a sequence of points $w_{i_k} \in X_{i_k}$ such that $\haus^N(E_{i_k}\cap B_1(w_{i_k}) )=0$ for any $k$.

Therefore, since $\haus^N(E_{i_k})\to \haus^N(F)$, there exist radii $r_{i_k,1}, r_{i_k,2} \in [0,1)$ such that either
\[
E_{i_k}\setminus B_{r_{i_k,1}}(z_{i_k})
\qquad
\text{or}
\qquad
E_{i_k}\cup B_{r_{i_k,2}}(w_{i_k})
\]
has measure equal to $\haus^N(F)$, for any $k$ large. If $r_{i_k,2}>0$ for infinitely many $k$'s, then, up to passing to a subsequence, we can take $F_{i_k}= E_{i_k}\cup B_{r_{i_k,2}}(w_{i_k})$. Indeed, since $\haus^N(E_{i_k})\to \haus^N(F)$, then $r_{i_k,2}\to0$ and $P(B_{r_{i_k,2}}(w_{i_k})\to0$, implying \eqref{eq:ApproxInMassa}.

We can argue in the very same way if $r_{i_k,2}=0$ definitely and $r_{i_k,1}>0$ for infinitely many $k$'s with $\liminf_k r_{i_k,1}=0$, taking $F_{i_k}= E_{i_k}\setminus B_{r_{i_k,1}}(z_{i_k})$ this time.

Finally, if $r_{i_k,2}=0$ definitely, $r_{i_k,1}>0$ for infinitely many $k$'s and $\liminf_k r_{i_k,1}\ge r_0>0$, then $\haus^N(E_{i_k} \cap B_{r_{i_k,1}}(z_{i_k}))\to0$ and we take $F_{i_k}= E_{i_k}\setminus B_{r_{i_k,1}}(z_{i_k})$, estimating
\[
P(F) \le \liminf_k P( E_{i_k}\setminus B_{r_{i_k,1}}(z_{i_k})) \le \limsup_k \frac{C_{K,N}}{r_0}\haus^N(E_{i_k} \cap B_{r_{i_k,1}}(z_{i_k})) + P(E_{i_k}) = P(F),
\]
where we estimated $P( E_{i_k}\setminus B_{r_{i_k,1}}(z_{i_k}))$ employing the Deformation Lemma given by \cite[Theorem 1.1]{AntonelliPasqualettoPozzetta}.
\end{proof}

The following results are shown in \cite{AFP21}.

\begin{lemma}\label{lem:ProfileOnBoundedSets}
Let $(X,\dist,\mathcal{H}^N)$ be an $\RCD (K,N)$ space with $\mathcal{H}^N(X)=+\infty$. If, for some $v_0>0$,  $\mathcal{H}^N(B_1(x))\ge v_0$ for any $x\in X$, then the isoperimetric profile $I_X$ of $X$ can be rewritten as
\begin{equation}\label{eq:ProfileBoundedSets}
  I_X(V) = \inf \left\{P(E) \st \text{$E\subset X$ Borel, $\mathcal{H}^N(E)=V$, $E$ bounded} \right\}
\qquad
\forall\, V\in (0,+\infty).  
\end{equation}
\end{lemma}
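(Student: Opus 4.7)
The inequality $I_X(V) \leq \inf\{P(E) : \mathcal{H}^N(E) = V,\, E \text{ bounded}\}$ is immediate from \cref{def:BVperimetro}. For the converse, fix $\varepsilon > 0$ and pick a competitor $E \subset X$ with $\mathcal{H}^N(E) = V$ and $P(E) \leq I_X(V) + \varepsilon$. I will construct a \emph{bounded} set $\widetilde{E}_n$ of finite perimeter with $\mathcal{H}^N(\widetilde{E}_n) = V$ and $\limsup_n P(\widetilde{E}_n) \leq P(E)$; since $\varepsilon$ is arbitrary, this yields the claim.

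\emph{Step 1: truncation.} Fix $o \in X$, set $E_R := E \cap B_R(o)$, and regard $P(E,\cdot) = |D\chi_E|$ as a finite Borel measure, so that $P(E, \overline{B_R(o)}) \to P(E)$ as $R \to +\infty$. The BV coarea formula applied to the $1$-Lipschitz function $\dist(\cdot,o)$ yields
\begin{equation*}
\int_0^{+\infty} |D\chi_{B_R(o)}|(E) \, dR \;\leq\; \int_E \lip \dist(\cdot, o)\, d\mathcal{H}^N \;\leq\; \mathcal{H}^N(E) = V,
\end{equation*}
so there is a sequence $R_n \to +\infty$ with $|D\chi_{B_{R_n}(o)}|(E) \to 0$. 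Using the Leibniz-type bound $P(E_{R_n}) \leq P(E, \overline{B_{R_n}(o)}) + |D\chi_{B_{R_n}(o)}|(E)$ (which follows by approximating $\chi_{E_R}$ in $L^1$ by products of Lipschitz approximations of $\chi_E$ and of $\chi_{B_R(o)}$), one obtains $\mathcal{H}^N(E_{R_n}) \to V$ and $\limsup_n P(E_{R_n}) \leq P(E)$.

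\emph{Step 2: volume correction.} Set $\delta_n := V - \mathcal{H}^N(E_{R_n}) \geq 0$, $\delta_n \to 0$. Since $\mathcal{H}^N(X) = +\infty$ while $\mathcal{H}^N(B_R(o)) \leq v(N,K,R) < +\infty$ by Bishop--Gromov (\cref{rem:PerimeterMMS2}), $X$ has infinite diameter, and I may select $x_n \in X$ with $\dist(x_n,o) > R_n + 2$, ensuring $B_1(x_n) \cap E_{R_n} = \emptyset$. The function $g_n(r) := \mathcal{H}^N(B_r(x_n))$ is nondecreasing, left-continuous, satisfies $g_n(r) \leq v(N,K,r) \to 0$ as $r \to 0^+$, and is continuous at almost every $r$ because, by Fubini, $\int_0^{+\infty} \mathcal{H}^N(\{\dist(\cdot, x_n) = r\})\, dr = 0$. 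Hence for every small $\delta_n$ in the image of $g_n$ (which misses only countably many values) there is $r_n$, with $r_n \to 0$, such that $g_n(r_n) = \delta_n$. Define $\widetilde{E}_n := E_{R_n} \cup B_{r_n}(x_n)$: by disjointness $\mathcal{H}^N(\widetilde{E}_n) = V$, and
\begin{equation*}
P(\widetilde{E}_n) \;\leq\; P(E_{R_n}) + P(B_{r_n}(x_n)) \;\leq\; P(E_{R_n}) + s(N,K,r_n),
\end{equation*}
where the last perimeter bound comes from $P(B_r(x)) \leq s(N,K,r)$ in \cref{rem:PerimeterMMS2}. Since $s(N,K,r_n) \to 0$, passing to the limsup gives $\limsup_n P(\widetilde{E}_n) \leq P(E) \leq I_X(V) + \varepsilon$, and $\varepsilon \to 0^+$ concludes.

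\emph{Main obstacle.} The only delicate point is the exact matching $g_n(r_n) = \delta_n$, which might fail if $\delta_n$ lies in one of the countably many jump intervals of $g_n$. This is overcome by noting that Step 1 produces a \emph{full}-measure set of admissible radii $R$, while the set of ``forbidden'' values of $\delta$ corresponding to jumps of $g_n$ is $\mathcal{L}^1$-null; therefore one can perturb $R_n$ slightly inside its good set to land in the image of $g_n$. Alternatively, any small residual volume discrepancy can be absorbed by the Deformation Lemma \cite[Theorem 1.1]{AntonelliPasqualettoPozzetta}, exactly as in the proof of \cref{lem:ApproxInMassa}, at the cost of an $o(1)$ perimeter contribution.
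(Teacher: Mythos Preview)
The paper does not give its own proof of this lemma: it is quoted from \cite{AFP21} with the sentence ``The following results are shown in \cite{AFP21}.'' Your argument is the natural one---truncate by a large ball chosen via coarea so that the cut contributes negligible perimeter, then restore the exact volume by attaching a small far-away ball---and it is essentially correct.

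Two small technical remarks. First, your ``Leibniz-type bound'' is not literally obtained by the product-of-Lipschitz-approximations computation you indicate (that gives only $P(E\cap B_R)\le P(E)+P(B_R)$, not the localized version). What you actually need is that for a.e.\ radius $r$ one has $P(E\cap B_r)=P(E,B_r)+P(B_r,E^{(1)})$, which in the paper's setting is exactly \cite[Corollary~2.6]{AntonelliPasqualettoPozzetta}; this is how the same manoeuvre is carried out in the proofs of \cref{thm:RitoreRosalesNonSmooth} and \cref{lem:EqualityIsopInfiniti}. Combined with the coarea bound $\int_0^\infty P(B_r(o),E^{(1)})\,\mathrm{d}r\le \haus^N(E)=V$, this gives your Step~1 cleanly. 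Second, since the lemma is stated for $\RCD(K,N)$ rather than $\RCD((N-1)K,N)$, the model quantities you should invoke from \cref{rem:PerimeterMMS2} are $v(N,K/(N-1),r)$ and $s(N,K/(N-1),r)$. Your handling of the volume-matching obstacle via the Deformation Lemma (as in the proof of \cref{lem:ApproxInMassa}) is the robust fix and is fully consistent with the tools available in the paper.
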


The following proposition has to be read as a generalization of \cite[Lemma 2.7]{Nar14} and \cite[Proposition 3.2]{AFP21}. We omit the proof that can be reached arguing verbatim as in \cite[Proposition 3.2]{AFP21}.

\begin{proposition}\label{prop:ComparisonIsoperimetricProfile} 
Let $(X,\dist,\mathcal{H}^N)$ be an $\RCD(K,N)$ space with $K\leq 0$ such that $\mathcal{H}^N(B_1(x))\geq v_0>0$ for every $x\in X$. Let $p_i\in X$ be a diverging sequence of points on $X$. Then, up to subsequence, there exists $(X_{\infty},\dist_{\infty},\mathcal{H}^N,p_{\infty})$ a pointed $\RCD(K,N)$ space such that
\begin{equation}\label{eq:ConvergenceIsop}
(X,\dist,\mathcal{H}^N,p_i)\xrightarrow[i\to+\infty]{pmGH}(X_{\infty},\dist_{\infty},\mathcal{H}^N,p_{\infty}).
\end{equation}

Moreover, whenever a diverging sequence of points $p_i\in X$ and a pointed $\RCD(K,N)$ space $(X_{\infty},\dist_{\infty},\mathcal{H}^N,p_{\infty})$ satisfy \eqref{eq:ConvergenceIsop}, then
\begin{equation}\label{eq:GeneralInequalityIsopProfile}
    I_{X}(V) \le I_{X}(V_1) + I_{X_\infty}(V_2) 
    \qquad
    \forall\, V= V_1+ V_2,
\end{equation}
with $V,V_1,V_2\ge 0$.
In particular
\begin{equation}\label{eq:InequalityIsopProfile}
 I_{X}(V)\leq I_{X_{\infty}}(V) \qquad \forall\, V>0,
\end{equation}
Moreover if, for any $j\ge 1$, $\{p_{i,j} \,|\, i \in \N\}$ is a diverging sequence of points on $X$ such that $(X,\dist,\mathcal{H}^N,p_{i,j})\to (X_j,\dist_j,\mathcal{H}^N,p_j)$ in the pmGH sense as $i\to+\infty$, then
\begin{equation}\label{eq:InfinityInequalityIsopProfile}
    I_{X}(V) \le I_{X}(V_0) + \sum_{j=1}^{+\infty} I_{X_j}(V_j),
\end{equation}
whenever $V= \sum_{j=0}^{+\infty} V_j$ with $V,V_j\ge 0$ for any $j$.
\end{proposition}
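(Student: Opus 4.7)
The plan is to prove the three claims in order. For the first, existence of the pmGH limit is immediate from the synthetic Gromov precompactness theorem for $\RCD$ spaces (\cref{rem:GromovPrecompactness}): the sequence $(X,\dist,\haus^N,p_i)$ lies in $\RCD(K,N)$ with a single fixed $K$, and the uniform lower bound $\haus^N(B_1(p_i))\ge v_0>0$ together with the Bishop--Gromov upper bound $\haus^N(B_1(p_i))\le v(N,K,1)$ from \cref{rem:PerimeterMMS2} places it inside the compact class; hence a subsequence pmGH-converges to an $\RCD(K,N)$ space whose reference measure is $\haus^N$ (the $N$-dimensional Hausdorff measure is preserved in the limit by the volume convergence results of \cite{DePhilippisGigli18}).

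The core is the two-term inequality \eqref{eq:GeneralInequalityIsopProfile}. Fix $\varepsilon>0$ and $o\in X$. By \cref{lem:ProfileOnBoundedSets} applied in $X$ and in $X_\infty$, pick bounded sets of finite perimeter $E\subset X$ and $F\subset X_\infty$ with $\haus^N(E)=V_1$, $P(E)\le I_X(V_1)+\varepsilon$, and $\haus^N(F)=V_2$, $P(F)\le I_{X_\infty}(V_2)+\varepsilon$. Apply the constant-mass approximation \cref{lem:ApproxInMassa} along the pmGH-converging sequence $(X,\dist,\haus^N,p_i)\to(X_\infty,\dist_\infty,\haus^N,p_\infty)$ to the set $F$: in a common realization $(Z,\dist_Z)$ of the convergence, we obtain (along a subsequence $i_k$) uniformly bounded sets $F_{i_k}\subset X$ with $F_{i_k}\to F$ in $L^1$-strong, $P(F_{i_k})\to P(F)$ and $\haus^N(F_{i_k})=V_2$ for every $k$. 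By the uniform boundedness, $F_{i_k}\subset B_{R_2}(p_{i_k})$ for some $R_2$ independent of $k$, whereas $E\subset B_{R_1}(o)$. Since $\dist(p_{i_k},o)\to+\infty$, for $k$ large $E$ and $F_{i_k}$ are disjoint and bounded, so their union has $\haus^N(E\cup F_{i_k})=V_1+V_2=V$ and $P(E\cup F_{i_k})=P(E)+P(F_{i_k})$. Therefore
\[
I_X(V)\le P(E)+P(F_{i_k})\le I_X(V_1)+I_{X_\infty}(V_2)+2\varepsilon + o_k(1),
\]
and letting first $k\to+\infty$ and then $\varepsilon\to 0$ yields \eqref{eq:GeneralInequalityIsopProfile}. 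The particular case \eqref{eq:InequalityIsopProfile} follows by taking $V_1=0$ (and $I_X(0)=0$).

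For the iterated bound \eqref{eq:InfinityInequalityIsopProfile}, I iterate \eqref{eq:GeneralInequalityIsopProfile} descending through the indices: applying it with the sequence $p_{i,n}$ and the splitting $V=(V-V_n)+V_n$ gives $I_X(V)\le I_X(V-V_n)+I_{X_n}(V_n)$; applying it again to $V-V_n=(V-V_n-V_{n-1})+V_{n-1}$ with the sequence $p_{i,n-1}$, and so on down to $j=1$, yields
\[
I_X(V)\le I_X\Bigl(V_0+\sum_{j>n}V_j\Bigr)+\sum_{j=1}^{n}I_{X_j}(V_j).
\]
As $n\to+\infty$, $V_0+\sum_{j>n}V_j\searrow V_0$, and upper semicontinuity of $I_X$ on $\RCD(K,N)$ spaces with the ball-volume lower bound (\cite[Lemma 3.1]{AFP21}) implies $\limsup_n I_X(V_0+\sum_{j>n}V_j)\le I_X(V_0)$. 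Passing to the limit gives \eqref{eq:InfinityInequalityIsopProfile}, the inequality being trivial when the right-hand side is $+\infty$.

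The main obstacle is the constant-mass approximation step: without \cref{lem:ApproxInMassa} one would only obtain competitors whose measures converge to $V_1+V_2$ rather than equal it, requiring an additional perturbation argument to recover the correct volume constraint. Crucially, \cref{lem:ApproxInMassa} not only fixes the measure exactly but also guarantees uniform boundedness of the approximants, which is what lets the divergence of $p_i$ separate the two pieces in the realization. A secondary subtlety, the potential clustering of the different sequences $\{p_{i,j}\}_j$ for distinct $j$, is bypassed entirely by the descending iteration rather than by attempting to place all the approximants disjointly in $X$ at once.
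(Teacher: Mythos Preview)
Your proof is correct and follows essentially the route the paper has in mind: the paper omits the proof, referring verbatim to \cite[Proposition 3.2]{AFP21}, and your argument (Gromov precompactness for the existence part; transporting a near-optimal bounded set from $X_\infty$ back into $X$ along the pmGH convergence and forming a disjoint competitor with a near-optimal bounded set in $X$) is precisely that strategy, made slightly cleaner here by invoking the constant-mass approximation \cref{lem:ApproxInMassa} instead of adjusting the volume by hand with small balls.

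The only genuine variation is in the infinite-sum inequality \eqref{eq:InfinityInequalityIsopProfile}. The paper's companion argument (see the proof of \cref{prop:ProfileDecomposition}) places finitely many approximants $F_1,\ldots,F_{j_\delta}$ simultaneously near mutually diverging points in $X$ and absorbs the tail mass $\sum_{j>j_\delta}V_j$ with a single small ball whose perimeter vanishes by Bishop--Gromov. You instead iterate the two-term inequality and pass to the limit using upper semicontinuity of $I_X$ from \cite[Lemma 3.1]{AFP21}. Both work; your route is a touch more elegant in that it avoids controlling the tail perimeter explicitly and sidesteps the need to arrange all the approximants disjointly at once, at the mild cost of importing the upper semicontinuity statement. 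Either way there is nothing missing.
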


We will need the following recent result about the topological regularity of isoperimetric sets.

\begin{theorem}[{\cite[Theorem 1.4]{AntonelliPasqualettoPozzetta}}]\label{thm:RegularityIsoperimetricSets}
    Let $(X,\dist,\haus^N)$ be an $\RCD(K,N)$ space with $2\leq N<+\infty$ natural number, $K\in\mathbb R$, and assume there exists $v_0>0$ such that $\mathcal{H}^N(B_1(x))\geq v_0$ for every $x\in X$. Let $E\subset X$ be an isoperimetric region.
    
    Then $E^{(1)}$ is open and bounded, $\partial^eE=\partial E^{(1)}$, and $\partial E^{(1)}$ is $(N-1)$-Ahlfors regular in $X$. 
\end{theorem}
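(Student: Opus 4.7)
The plan is to upgrade the measure-theoretic information encoded by the isoperimetric minimality of $E$ to genuine topological/metric regularity, via density estimates in the spirit of De Giorgi--Tamanini, adapted to the $\RCD$ setting by means of the Deformation Lemma (used also in the proof of \cref{lem:ApproxInMassa}).

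First I would establish \emph{uniform density estimates} for $E$: there exist constants $c_0=c_0(K,N,v_0,\haus^N(E),P(E))\in(0,1)$ and $r_0=r_0(K,N,v_0,\haus^N(E),P(E))>0$ such that for every $x\in X$ whose every neighborhood meets both $E$ and $E^c$ in positive measure, and every $r\le r_0$, one has
\begin{equation*}
c_0 r^N \;\le\; \haus^N(E\cap B_r(x)) \;\le\; (1-c_0)\,\haus^N(B_r(x)).
\end{equation*}
The argument runs as follows. Set $m(r):=\haus^N(E\cap B_r(x))$ and, for a competitor, use the Deformation Lemma \cite[Theorem 1.1]{AntonelliPasqualettoPozzetta} to trade volume between $E\cap B_r(x)$ and a distant small ball at a controlled perimeter cost $C_{K,N}r^{-1}\haus^N(E\cap B_r(x))$. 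Combined with the isoperimetric minimality of $E$ and the coarea inequality $m'(r)\ge P(E,B_r(x))-P(B_r(x)\cap E;\partial B_r(x))$, together with the relative isoperimetric inequality \eqref{eq:RelativeIsopRCD}, one obtains a differential inequality of the form $m(r)^{(N-1)/N}\le C\, m'(r)$ on a set of positive measure in $(0,r_0)$, forcing the lower bound $m(r)\ge c_0 r^N$ unless $m$ vanishes identically on a neighbourhood of $x$. The upper bound is obtained symmetrically, applying the same argument to $E^c$ locally.

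From the density estimate it is standard to deduce that $E^{(1)}$ is open, that $\partial^e E=\partial E^{(1)}$, and that $\partial E^{(1)}$ is $(N-1)$-Ahlfors regular. Indeed, if $x$ had density $1$ but did not lie in the interior of $E^{(1)}$, one could find $r_j\to 0$ and $y_j\in B_{r_j}(x)$ with $\haus^N(E\cap B_{r_j/2}(y_j))\le (1-c_0)\haus^N(B_{r_j/2}(y_j))$, contradicting the Lebesgue differentiation theorem at $x$. Hence $E^{(1)}$ is open and analogously $\{x:\text{density } 0\}$ is open; what remains, the essential boundary, coincides with $\partial E^{(1)}$. For Ahlfors regularity, the lower bound on $P(E,B_r(x))$ follows from the relative isoperimetric inequality \eqref{eq:RelativeIsopRCD} applied to the balanced densities just obtained; the upper bound $P(E,B_r(x))\le C r^{N-1}$ follows from a comparison with the competitor $E\setminus B_r(x)$ (or $E\cup B_r(x)$, whichever decreases volume), using again the Deformation Lemma to restore volume at negligible boundary cost, and bounding $P(\partial B_r(x))$ via \eqref{eqn:BGPer} and \eqref{eqn:BGVol}.

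For \emph{boundedness}, fix $x_0\in X$ and consider $V(R):=\haus^N(E\setminus B_R(x_0))$. Using $E\cap B_R(x_0)$ as a competitor, after restoring the missing volume $V(R)$ inside a fixed ball far from the main body of $E$ via the Deformation Lemma (whose perimeter cost is controlled by $V(R)^{(N-1)/N}$ up to a multiplicative constant depending on $K,N,v_0$), the minimality of $E$ yields
\begin{equation*}
    P(E) \;\le\; P(E\cap B_R(x_0)) + C\, V(R)^{(N-1)/N},
\end{equation*}
which, combined with $P(E)\ge P(E\cap B_R(x_0))+P(E\setminus B_R(x_0))-2\,P(E,\partial B_R(x_0))$ and the coarea identity $V'(R)=-P(E,\partial B_R(x_0))$ a.e., gives the differential inequality $-V'(R)\gtrsim V(R)^{(N-1)/N}$. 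Integrating this from a suitable $R_0$ forces $V(R)\equiv 0$ for $R\ge R_0+C\,\haus^N(E)^{1/N}$, i.e.\ $E^{(1)}$ is bounded.

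The main obstacle is precisely the step where one needs a clean, quantitative replacement for the Euclidean cut-and-paste: on an $\RCD(K,N)$ space one cannot simply translate pieces of $E$, so the Deformation Lemma of \cite{AntonelliPasqualettoPozzetta} is the crucial tool, providing, in any metric ball, a modification of the set with prescribed volume change and perimeter cost depending only on $K,N,v_0$ and the radius. The delicate part is tracking the dependence of all constants on $\haus^N(E)$ and $P(E)$ only (and not on the global geometry of $X$), so that the density estimate and the ODE for $V(R)$ have uniform constants; uniform lower volume bounds $\haus^N(B_1(\cdot))\ge v_0$ are exactly what makes this dependence tractable.
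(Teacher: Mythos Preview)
The paper does not prove this statement: \cref{thm:RegularityIsoperimetricSets} is quoted verbatim from \cite[Theorem 1.4]{AntonelliPasqualettoPozzetta} and used as a black box. Your sketch, based on density estimates obtained from the Deformation Lemma, the relative isoperimetric inequality, and an ODE argument for boundedness, is precisely the standard route and is essentially the strategy carried out in the cited reference; so there is nothing to compare against within the present paper.

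One technical caveat worth flagging in your boundedness step: the localized deformation result (\cref{thm:VariazioniMaggi}) as stated gives a perimeter cost \emph{linear} in the volume correction $\eta$, with constants depending on the chosen set and ball, not a cost of order $\eta^{(N-1)/N}$. To get the $V(R)^{(N-1)/N}$ bound you need in the ODE for $V(R)$, you should instead add a small ball of volume $V(R)$ at a point at distance greater than $1$ from $E$ (which exists once $R$ is large, since $\haus^N(E)<\infty$ and $\haus^N(B_1(\cdot))\ge v_0$); then Bishop--Gromov \eqref{eqn:BGPer}, \eqref{eqn:BGVol} give perimeter $\lesssim V(R)^{(N-1)/N}$ directly. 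With this adjustment the differential inequality $-V'(R)\gtrsim V(R)^{(N-1)/N}$ follows and the rest of your argument goes through.
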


We shall employ the following formulation of deformation lemma for sets of finite perimeter in $\RCD$ spaces.

\begin{theorem}[{Measure prescribing localized deformations \cite[Theorem 2.35]{AntonelliPasqualettoPozzetta}}]\label{thm:VariazioniMaggi}
    Let \((X,\sfd,\mm)\) be an \({\sf RCD}(K,N)\) space with \(N<\infty\).
	Let \(E\subset X\) be a set of locally finite perimeter and let $A\subset X$ be a connected open set. Assume that $P(E,A)>0$.
	\begin{itemize}
	    \item[i)] If $E\cap A$ has interior points, then there exist a ball $B\Subset A$, $\eta_1=\eta_1(E,A)>0$ and $C_1(E,A)>0$ such that for every $\eta \in [0,\eta_1)$ there is a set $F\supset E$ such that
	    \[
	    E\Delta F \subset B,
	    \qquad
	    \meas(F\cap B)=\meas(E\cap B)+\eta,
	    \qquad
	    P(F,A)\le C_1(E,A)\eta+ P(E,A). 
	    \]
	    
	    \item[ii)] If $E\cap A$ has exterior points, then there exist a ball $B\Subset A$, $\eta_2=\eta_2(E,A)>0$, and $C_2(E,A)>0$ such that for every $\eta \in [0,\eta_2)$ there is a set $F\subset E$ such that
	    \[
	    E\Delta F \subset B,
	    \qquad
	    \meas(F\cap B)=\meas(E\cap B)-\eta,
	    \qquad
	    P(F,A)\le C_2(E,A)\eta+ P(E,A).
	    \]
	\end{itemize}
\end{theorem}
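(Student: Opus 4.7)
My plan is to achieve the prescribed measure change via a level-set construction inside a suitably chosen ball $B \Subset A$, then extract the perimeter bound using coarea and Bishop--Gromov comparison (both available since an $\RCD(K,N)$ space is PI in the sense of \cref{def:PI}). I describe part i); part ii) is dual.

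\emph{Choice of the ball and the family.} The interior-points hypothesis yields $y_0 \in A$ and $r_0 > 0$ with $B_{r_0}(y_0) \subset E^{(1)} \cap A$. Since $A$ is connected and $P(E, A) > 0$, the set $A \setminus E$ has positive measure, and by moving $y_0$ to an interior point whose nearest boundary of $E^{(1)}\cap A$ lies inside $A$ one can fix a radius $R > r_0$ with $B := B_R(y_0) \Subset A$, $\meas(B \setminus E) > 0$, and $B \cap \partial^\ast E \neq \emptyset$. Setting $h(x) := \dist(x, \overline{B}_{r_0}(y_0))$, define
\[
F_t := E \cup \big( \{h \leq t\} \cap B \big), \qquad t \in [0, R - r_0].
\]
Then $F_0 = E$, $F_t \supset E$, $E \Delta F_t \subset B$, and $t \mapsto \meas(F_t \cap B)$ is continuous and (on some interval $[0, \bar t]$) strictly increasing, with slope bounded below since the expanding ball $B_{r_0+t}(y_0)$ eats into the positive-measure set $B \setminus E$. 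Inverting yields $t(\eta)$ with $\meas(F_{t(\eta)} \cap B) - \meas(E \cap B) = \eta$ for every small $\eta$.

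\emph{Perimeter bound.} The standard union inequality gives
\[
P(F_t, A) \leq P(E, A) + P\big(\{h \leq t\} \cap B,\, A \setminus E^{(1)}\big),
\]
and since $\{h \leq t\} = \overline{B}_{r_0+t}(y_0)$, Bishop--Gromov (\cref{rem:PerimeterMMS2}) bounds $P(B_{r_0+t}(y_0)) \leq s(N, K, R) =: M$ uniformly for $t \in [0, R-r_0]$. On the other hand, a quantitative version of the choice of $B$ gives a linear lower bound $\eta \geq c_0 \, t(\eta)$ for small $\eta$, where $c_0 = c_0(E, A) > 0$: this is the rate at which $B_{r_0+t}(y_0) \setminus E$ grows in $t$, and its existence forces the anchor radius $r_0$ to be chosen so that $\partial B_{r_0}(y_0)$ carries a portion of positive $\mathcal{H}^{N-1}$-measure outside $E^{(1)}$, which is achievable by applying the coarea formula to $h$ and performing a Sard-type selection among admissible $r_0$'s. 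Combining the two estimates yields $P(F_{t(\eta)}, A) \leq P(E, A) + (M / c_0)\, \eta$, the sought bound with $C_1(E, A) := M/c_0$.

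Part ii) is entirely dual: the exterior-points hypothesis yields $B_0 \Subset A$ with $\meas(B_0 \cap E) = 0$, and one shrinks $E$ via $F_t := E \setminus (\{h \leq t\} \cap B)$ for the same $h$. The main obstacle in both parts is the promotion of the integrated coarea identity into a pointwise linear volume-to-perimeter comparison; the decisive ingredient is the uniform Bishop--Gromov bound on the perimeter of metric balls in $\RCD(K,N)$ spaces, combined with the ability to choose the anchor radius $r_0$ so that the expanding collar of $\overline{B}_{r_0}(y_0)$ has positive-measure intersection with $A \setminus E$ (resp.\ $A \cap E$) from the start. The remaining ingredients (the coarea formula for BV on PI spaces and the union inequality for perimeters) are standard consequences of the theory summarized in \cref{sec:Preliminaries}.
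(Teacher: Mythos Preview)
The paper does not prove this statement; it is quoted from \cite[Theorem 2.35]{AntonelliPasqualettoPozzetta}. Independently of that, your argument contains a genuine gap at the ``combining'' step. What you have actually established is a \emph{uniform} bound $P(F_t,A)-P(E,A)\le P(B_{r_0+t}(y_0),A\setminus E^{(1)})\le M$ together with the volume comparison $\eta\ge c_0\,t(\eta)$. These two facts do not yield $P(F_{t(\eta)},A)-P(E,A)\le (M/c_0)\,\eta$: for that you would need the perimeter increment to be bounded by $M\cdot t$, not by the constant $M$.

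The obstruction is structural, not a matter of sharpening constants. By coarea for the $1$-Lipschitz function $h$, the volume gain is
\[
\eta(t)=\int_0^t g(s)\,\de s,\qquad g(s):=P\big(B_{r_0+s}(y_0),\,A\setminus E^{(1)}\big),
\]
while your perimeter increment is bounded by the very same $g(t)$. Hence the ratio you must control equals $g(t)\big/\!\int_0^t g(s)\,\de s$. If $g(0^+)>0$ (precisely your ``$\partial B_{r_0}(y_0)$ carries positive measure outside $E^{(1)}$'' condition), this ratio behaves like $1/t\to+\infty$ as $t\to0$; if instead $g(0^+)=0$, then $\eta(t)=o(t)$ and the linear lower bound $\eta\ge c_0 t$ fails. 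Either way, expanding metric balls cannot produce a perimeter increment that is linear in the prescribed volume increment: you pay essentially the full sphere perimeter to gain an infinitesimal amount of volume. Achieving the linear estimate requires a deformation that \emph{slides} a portion of $\partial^\ast E$ rather than capping it with a ball---in the Euclidean case this is done near a reduced-boundary point where $E$ is close to a half-space, so that both volume and perimeter vary linearly in the translation parameter; the argument in \cite{AntonelliPasqualettoPozzetta} transports this mechanism to the $\RCD$ setting using the tangent structure of $\partial^\ast E$.
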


We recall the following isoperimetric inequality for small volumes.

\begin{proposition}[{\cite[Proposition 3.20, Remark 3.21]{AntonelliPasqualettoPozzetta}}]\label{prop:IsopVolumiPiccoli}
Let $(X,\dist,\meas)$ be a $\CD(K,N)$ space. Assume that $\meas(B_1(x))\geq v_0>0$ for every $x\in X$. Then there exist $v:=v(K,N,v_0)>0$ and $C:=C(K,N,v_0)>0$ such that for every set of finite perimeter $E$ it holds
$$
\meas(E)\leq v \quad\Rightarrow \quad \meas(E)^{N/(N-1)}\leq CP(E).
$$
\end{proposition}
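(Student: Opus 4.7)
The plan is to combine the local mass lower bound from \cref{lem:MasLowBound} with the relative isoperimetric inequality on unit balls for $\CD(K,N)$ spaces in \cref{rem:RelativeIsoperimetric} to derive the Euclidean-type isoperimetric inequality $\meas(E)^{(N-1)/N}\le C'P(E)$ valid for sets of small volume, and then observe that for volumes bounded by $1$ the inequality with the larger exponent $N/(N-1)$ follows for free.

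More precisely, I would set $v\eqdef \min\{1, v_0/4\}$ and take any set of finite perimeter $E$ with $0<\meas(E)\le v$ and $P(E)>0$ (the degenerate cases being trivial). The lower bound $\meas(B_1(x))\ge v_0$ forces $\meas(E\cap B_1(x))\le \meas(E)\le v_0/4 < \meas(B_1(x))/2$ for every $x\in X$, which rules out the trivial case at the beginning of the proof of \cref{lem:MasLowBound}. That lemma then produces a point $x_0 \in X$ such that
$$
\meas(E\cap B_1(x_0))\ge C_M \frac{\meas(E)^N}{P(E)^N},
$$
with $C_M = C_M(K,N,v_0)>0$.

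Since $\meas(E\cap B_1(x_0))\le \meas(B_1(x_0))/2$, applying the relative isoperimetric inequality of \cref{rem:RelativeIsoperimetric} on $B_1(x_0)$, in its uniform version depending only on $K,N,v_0$, yields
$$
\meas(E\cap B_1(x_0))^{(N-1)/N}\le C_{\mathrm{RI}}\, P(E,B_1(x_0))\le C_{\mathrm{RI}}\, P(E).
$$
Substituting the previous lower bound and rearranging gives $\meas(E)^{(N-1)/N}\le C'\,P(E)$ with $C'$ depending only on $K,N,v_0$. Finally, since $\meas(E)\le 1$ and $N/(N-1)\ge (N-1)/N$, one has $\meas(E)^{N/(N-1)}\le \meas(E)^{(N-1)/N}\le C'\,P(E)$, which is the desired inequality.

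I do not expect any serious obstacle in this argument: both ingredients are already in the excerpt, and the proof is essentially a one-step combination. The only genuinely delicate point is the uniformity of the constant $C_{\mathrm{RI}}$ with respect to the auxiliary set $\Omega$ appearing in the general statement of \cref{rem:RelativeIsoperimetric}; the uniform lower bound on unit balls enters precisely to remove that dependence, ensuring that the final constant $C$ depends only on $K,N,v_0$.
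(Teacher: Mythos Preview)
The paper does not supply a proof of this proposition; it is quoted from \cite[Proposition~3.20, Remark~3.21]{AntonelliPasqualettoPozzetta}. So there is no argument in the paper to compare against.

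Your argument is correct. In fact you obtain the stronger inequality $\meas(E)^{(N-1)/N}\le C'P(E)$, which is precisely the form actually invoked later in the paper (see \eqref{eqn:PrevIn} in Step~3 of the proof of \cref{thm:MassDecompositionINTRO}); the passage to the exponent $N/(N-1)$ is then just monotonicity of $t\mapsto t^\alpha$ on $[0,1]$.

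One simplification: the second application of the relative isoperimetric inequality is unnecessary. Once $\meas(E)\le v_0/4$ rules out the alternative $\meas(E\cap B_1(x_0))\ge v_0/2$ in \cref{lem:MasLowBound}, you already have
\[
C_M\,\frac{\meas(E)^N}{P(E)^N}\le \meas(E\cap B_1(x_0))\le \meas(E),
\]
which rearranges directly to $\meas(E)^{(N-1)/N}\le C_M^{-1/N}P(E)$. Equivalently, \eqref{eq:MLB3} in the proof of \cref{lem:MasLowBound} together with the trivial bound $L\le 2\meas(E)^{1/N}$ gives this at once. Your detour through a second relative isoperimetric estimate works as well, only with a slightly worse constant.
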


The following \cref{lem:ProfileHolder} states that in the nonsmooth setting of $\RCD(K,N)$ spaces with reference measure $\mathcal{H}^N$ and volume of unit balls uniformly bounded below, the isoperimetric profile is locally $(1-1/N)$-H\"older continuous. This result gives a mild regularity of the isoperimetric profile that will be sufficient for the purposes of this work. We stress that, by using refined tools of geometric analysis in nonsmooth spaces, and building on the regularity result of \cref{lem:ProfileHolder}, much more can be said on the isoperimetric profile function and we shall address this problem in the paper \cite{AntonelliPasqualettoPozzettaSemola}. On the other hand the proof of the next result, which is  adapted from \cite[Theorem 2]{FloresNardulli20}, is based on elementary comparison arguments.

\begin{lemma}\label{lem:ProfileHolder}
Let $(X,\dist,\mathcal{H}^N)$ be an $\RCD (K,N)$ space such that $\mathcal{H}^N(B_1(x))\geq v_0>0$ for every $x \in X$. Then its isoperimetric profile $I_X:(0,\mathcal{H}^N(X))\to [0,+\infty)$ is locally $(1-1/N)$-H\"older continuous.
\end{lemma}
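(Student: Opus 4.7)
The plan is to establish the local $(1-1/N)$-Hölder continuity by proving, for any compact sub-interval $[a,b]\subset(0,\mathcal{H}^N(X))$, a two-sided estimate
\[
|I_X(V_1)-I_X(V_2)|\le C\,(V_2-V_1)^{(N-1)/N}
\]
for $V_1<V_2$ in $[a,b]$ with $V_2-V_1$ small enough, where $C=C(K,N,v_0,a,b)$. The construction is the natural one: pass from a near-minimizer of volume $V_1$ to one of volume $V_2$ (and vice versa) by adding or removing a single small ball. The key uniform geometric input is Remark 2.8, which under the hypothesis $\mathcal{H}^N(B_1(\cdot))\ge v_0$ gives constants $c,C$ depending only on $K,N,v_0$ such that for $r\in(0,1]$ and every $y\in X$,
\[
c\,r^N\le\mathcal{H}^N(B_r(y))\le C\,r^N,\qquad P(B_r(y))\le s(N,K,r)\le C\,r^{N-1}.
\]

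\textbf{Upper bound} $I_X(V_2)\le I_X(V_1)+C(V_2-V_1)^{(N-1)/N}$. Fix $\varepsilon>0$ and, via Lemma 2.18, pick a bounded set $E_1$ with $\mathcal{H}^N(E_1)=V_1$ and $P(E_1)\le I_X(V_1)+\varepsilon$. I then locate a ball $B_1(y)$ with very small (ideally zero) $E_1$-mass: in the noncompact case, since $E_1$ is bounded I take $y$ with $\mathrm{dist}(y,E_1)$ arbitrarily large; in the compact case I use a finite covering of $X$ by unit balls together with the pigeonhole principle (since $\mathcal{H}^N(X\setminus E_1)\ge \mathcal{H}^N(X)-b>0$) to locate a unit ball with uniformly positive complementary density. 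Using the left-continuity and the fact that $r\mapsto \mathcal{H}^N(B_r(y))$ has at most countably many discontinuities (disjoint spheres of positive measure inside sets of finite measure), I pick a continuity point $r_*$ with $\mathcal{H}^N(B_{r_*}(y)\setminus E_1)=V_2-V_1$; by the Bishop--Gromov lower volume bound this forces $r_*\le C'(V_2-V_1)^{1/N}$. Setting $E_2\eqdef E_1\cup B_{r_*}(y)$ gives
\[
I_X(V_2)\le P(E_2)\le P(E_1)+P(B_{r_*}(y))\le I_X(V_1)+\varepsilon+C''(V_2-V_1)^{(N-1)/N},
\]
and then $\varepsilon\to 0$.

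\textbf{Lower bound} $I_X(V_1)\le I_X(V_2)+C(V_2-V_1)^{(N-1)/N}$. Symmetrically, I take a bounded near-minimizer $E_2$ of volume $V_2$ and apply Lemma 2.14 (local mass lower bound) to obtain $y\in X$ with $\mathcal{H}^N(E_2\cap B_1(y))\ge c_*(K,N,v_0,b)>0$, using that $P(E_2)$ is uniformly controlled on $[a,b]$ (a priori bound on $I_X$ via e.g.\ Remark 2.8 and Lemma 2.18). The function $r\mapsto\mathcal{H}^N(E_2\cap B_r(y))$ is left-continuous, vanishes at $r=0$, and exceeds $V_2-V_1$ at $r=1$ once $V_2-V_1$ is small enough, so at a continuity point $r_*$ I reach $\mathcal{H}^N(E_2\cap B_{r_*}(y))=V_2-V_1$ with $r_*\le C'(V_2-V_1)^{1/N}$. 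Then $E_1\eqdef E_2\setminus B_{r_*}(y)$ satisfies
\[
I_X(V_1)\le P(E_1)\le P(E_2)+P(B_{r_*}(y))\le I_X(V_2)+\varepsilon+C''(V_2-V_1)^{(N-1)/N}.
\]

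\textbf{Main technical obstacle.} The one genuine subtlety is that for a given $y$ the volume function $r\mapsto\mathcal{H}^N(B_r(y))$, although left-continuous, can have jumps on a countable set, so I cannot always pick a radius hitting the prescribed mass exactly. I would resolve this by slightly undershooting at a continuity point of both $r\mapsto \mathcal{H}^N(B_r(y))$ and $r\mapsto\mathcal{H}^N(E_2\cap B_r(y))$ and absorbing the residual volume correction via the measure-prescribing deformation Theorem 2.22, applied to $E_1\cup B_{r_*}(y)$ or $E_2\setminus B_{r_*}(y)$, both of which have interior and exterior points after a small fattening of the reference open set. The corresponding $\varepsilon$-order perimeter correction is then sent to $0$ together with the initial near-minimality slack, yielding the asserted Hölder bound with uniform constants on $[a,b]$.
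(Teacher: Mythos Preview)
Your strategy coincides with the paper's: increase volume by adjoining a small ball, decrease it by excising one located via the mass lower bound, and control perimeters by Bishop--Gromov. There is, however, a gap in the step where you bound the radius $r_*$.

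In your ``lower bound'' you apply the local mass lower bound (\cref{lem:MasLowBound}) at \emph{unit} scale to obtain $y$ with $\haus^N(E_2\cap B_1(y))\ge c_*$, and then assert that the radius $r_*$ solving $\haus^N(E_2\cap B_{r_*}(y))=V_2-V_1$ satisfies $r_*\le C'(V_2-V_1)^{1/N}$. This does not follow: nothing prevents an annular configuration with $E_2\cap B_{1/2}(y)=\emptyset$, which would force $r_*>1/2$ regardless of how small $V_2-V_1$ is. What you need is a lower bound $\haus^N(E_2\cap B_r(\cdot))\ge c\,r^N$ at the \emph{relevant small scale}. The paper obtains this by invoking the \emph{scaled} mass lower bound (\cref{rem:ScaledMassLowBound}) directly at the scale $t_V\eqdef\big((V_0-V)/(C'v_0)\big)^{1/N}$: this produces a point $z$ (depending on $t_V$) with $\haus^N(E\cap B_{t_V}(z))\ge V_0-V$, after which the intermediate value theorem yields $t_V'\le t_V$ with the exact mass, and $t_V'\le t_V$ delivers the H\"older exponent. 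The same issue and the same fix apply to your compact-case ``upper bound'': pigeonhole at unit scale gives only $\haus^N(B_1(y)\setminus E_1)\ge\delta$, not a small-scale complementary density, so $r_*\le C'(V_2-V_1)^{1/N}$ is again unjustified. One can either apply \cref{rem:ScaledMassLowBound} to the finite-perimeter set $X\setminus E_1$, or follow the paper, which in the compact case takes an actual isoperimetric region (existing by direct method) and exploits that it has genuine exterior points by \cref{thm:RegularityIsoperimetricSets}.

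Finally, your ``main technical obstacle'' is a non-issue here: in an $N$-dimensional $\RCD(K,N)$ space the Bishop--Gromov monotonicity of $r\mapsto\haus^N(B_r(y))/v(N,K/(N-1),r)$, together with inner/outer regularity of the measure, forces $\haus^N(\partial B_R(y))=0$ for every $R>0$. Hence $r\mapsto\haus^N(B_r(y))$ and $r\mapsto\haus^N(E\cap B_r(y))$ are continuous and the intermediate-value step needs no patching; the paper uses this implicitly.
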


\begin{proof}
Let us fix $V_0\in (0,\haus^N(X))$, and let us show that $I_X$ is $(1-1/N)$-H\"older continuous locally around $V_0$. Let us assume first that $X$ is noncompact.

Let us fix $0<\varepsilon\leq 1$. From  \cref{lem:ProfileOnBoundedSets} we get that there exists a bounded set of finite perimeter $E\subset X$, with $\mathcal{H}^N(E)=V_0$, such that $P(E)\leq I_X(V_0)+\varepsilon$. Since $E$ is bounded ad $X$ is noncompact we get that there exists $B_1(x_0)\Subset X\setminus \overline E$. Since $\mathcal{H}^N(B_1(x_0))\geq v_0$, we have that for every $V\in [V_0,V_0+v_0)$ there exists an $r_V\leq 1$ such that $\mathcal{H}^N(E\sqcup B_{r_V}(x_0))=V$. Hence, for some constant $C:=C(N,K)>0$,
\begin{equation}\label{eqn:HolderAbove}
    I_X(V)\leq P(E\sqcup B_{r_V}(x_0)) = P(E)+P(B_{r_V}(x_0)) \leq P(E)+Cr_V^{N-1}\leq I_X(V_0)+Cr_V^{N-1}+\varepsilon,
\end{equation}
where we used \eqref{eqn:BGPer} and \eqref{eqn:BGVol}. By Bishop--Gromov comparison, see \cref{rem:PerimeterMMS2}, we have that 
\[
\frac{\mathcal{H}^N(B_{r_V}(x_0))}{v(N,K,r_v)}\geq \frac{\mathcal{H}^N(B_{1}(x_0))}{v(N,K,1)}=:\eta_0(N,K,v_0),
\]
and hence, since $r_V\leq 1$, there exists $\eta_1:=\eta_1(N,K,v_0)$ such that $\mathcal{H}^N(B_{r_V}(x_0))\geq \eta_1r_V^{N}$. Since $\mathcal{H}^N(B_{r_V}(x_0))=V-V_0$ we finally get that there exists $\eta_2:=\eta_2(N,K,v_0)$ such that
\[
r_V^{N-1}\leq \eta_2(V-V_0)^{\frac{N-1}{N}},
\]
and hence, by inserting the last inequality in \eqref{eqn:HolderAbove} we get that there exists $\eta_3:=\eta_3(N,K,v_0)$ such that for every $V\in [V_0,V_0+v_0)$ we have
\begin{equation}\label{eqn:Control1}
I_X(V)\leq I_X(V_0)+\eta_3(V-V_0)^{\frac{N-1}{N}}+\varepsilon.
\end{equation}

Let us now deal with the case in which $V\leq V_0$. By \cref{rem:ScaledMassLowBound} we get that there exists a constant $C':=C'(N,K,v_0)$ and a point $z\in X$ such that for every $0<r\leq 1$, the following inequality holds
\begin{equation}\label{eqn:ScaledMassLowerBound}
\mathcal{H}^N(E\cap B_r(z))\geq C'\min\left\{\frac{\left(\mathcal{H}^N(E)\right)^N}{P(E)^N},v_0r^N\right\}\geq C'\min\left\{\frac{V_0^N}{\left(I_X(V_0)+\varepsilon\right)^N},v_0r^N\right\},
\end{equation}
where in the second inequality holds by the choice of $E$.
Let us define 
\begin{equation}\label{eqn:DefinitionV1}
    v_1:=C'\min\left\{\frac{V_0^N}{(I_X(V_0)+\varepsilon)^N},v_0\right\},
\end{equation}
and let us take an arbitrary $V\in (\max\{0,V_0-v_1\},V_0]$. Since 
\begin{equation}\label{eqn:tV}
t_V:=\left(\frac{V_0-V}{C'v_0}\right)^{1/N}\leq \min\left\{1,\frac{V_0}{v_0^{1/N}(I_X(V_0)+\varepsilon)}\right\}\leq 1
\end{equation}
by the very definition of $v_1,V$, we can use \eqref{eqn:ScaledMassLowerBound} to obtain
\[
\mathcal{H}^N(E\cap B_{t_V}(z))\geq C'\min\left\{\frac{V_0^N}{\left(I_X(V)+\varepsilon\right)^N},v_0t_V^N\right\}=V_0-V,
\]
where in the last equality we exploited \eqref{eqn:tV}. Hence, from the previous inequality, we get that there exists $t_V'\leq t_V$ such that 
\[
\mathcal{H}^N(E\setminus B_{t_V'}(z))=V.
\]
Hence, for some constants  $C:=C(N,K)$, and $\widetilde C:=\widetilde C(N,K,v_0)$, the following inequality holds
\begin{equation}\label{eqn:Control2}
\begin{split}
I_X(V)&\leq P(E\setminus B_{t_V'}(z)) \leq P(E)+P(B_{t_v'}(z))\leq I_X(V_0)+\varepsilon+Ct_V'^{N-1} \\
&\leq I_X(V_0)+\varepsilon+\widetilde C(V_0-V)^{\frac{N-1}{N}}.
\end{split}
\end{equation}
where in the third inequality we exploited the choice of $E$ and Bishop--Gromov comparison (see \eqref{eqn:BGVol}, \eqref{eqn:BGPer}), and in the fourth inequality we used $t_V'\leq t_V$ and the definition of $t_V$.

Summing up, we proved the following. There exists a constant $\vartheta:=\vartheta(N,K,v_0)$ such that the following holds. Given $V_0\in (0,+\infty)$ and any $\varepsilon\in(0,1)$, letting $v_1$ be as in $\eqref{eqn:DefinitionV1}$, for every $V\in J\eqdef  (\max\{0,V_0-v_1\},V_0+v_0)$ we have 
\begin{equation}\label{eqn:INEQFINAL}
I_X(V)\leq I_X(V_0)+\vartheta|V-V_0|^{\frac{N-1}{N}}+\varepsilon.
\end{equation}

By using the previous inequality, up to shrinking the neighborhood $J$, we get that for every $V\in J$ the quantity
\[
C'\min\left\{\frac{V^N}{(I_X(V)+\varepsilon)^N},v_0\right\},
\]
is uniformly bounded below by some number $\gamma>0$, independently of $\varepsilon\in(0,1)$. Hence, up to shrinking the neighborhood $J$ in such a way that its length is less than $\min\{\gamma,v_0\}$, from \eqref{eqn:INEQFINAL}, for every $V,W\in J$ and for every $\varepsilon\in(0,1)$ the following inequality holds
\[
I_X(V)\leq I_X(W)+\vartheta|V-W|^{\frac{N-1}{N}}+\varepsilon.
\]
Taking $\varepsilon\to 0$ we get the sought conclusion.

It remains to consider the case in which $X$ is compact. In this case the classical direct method of Calculus of Variations implies that for every volume $V\in (0,\mathcal{H}^N(X))$ there exists an isoperimetric region $E$ of volume $V$ in $X$. Hence simpler comparison arguments can be performed exploiting the regularity result in \cref{thm:RegularityIsoperimetricSets}, which implies that $E$ has an interior (resp., exterior) point $x_0$ (resp., $x_1$). Thus one can reproduce the arguments above, which become slightly simpler, by adding (resp., subtracting) sufficiently small balls around $x_0$ (resp., $x_1$), using Bishop--Gromov comparison, and estimates from a deformation lemma like \cite[Theorem 1.1]{AntonelliPasqualettoPozzetta}.
\end{proof}


\section{Mass splitting of perimeter minimizing sequences on $\RCD$ spaces}\label{sec:RitRos}

In this section we identify a general phenomenon of perimeter minimizing sequences of sets, whose mass splits in a converging component and in a diverging one. This is the first step required for the proof of \cref{thm:MassDecompositionINTRO} and it can be seen as a generalization of \cite[Theorem 2.1]{RitRosales04}.

Let us first recall the well-understood property of $BV$ precompactness in the general framework of $\CD$ spaces. Such property is classical, and we just quickly outline the arguments that apply to our setting, since we were not able to find a precise reference to quote.

\begin{remark}[Compactness of $W^{1,1}_{\mathrm{loc}}$ in $L^1_{\mathrm{loc}}$]\label{rem:CompactnessBell}
Let $(X,\dist,\meas)$ be an $\CD(K,N)$ space with $K\leq 0$ and $N<+\infty$. Let us fix $R>0$ and $x\in X$, and denote $B:=B_R(x)$. We want to show that if $f_i\in \mathrm{Lip}_{\mathrm{loc}}(X)$ are such that 
$$
\sup_{i}\left(\|f_i\|_{L^1(B)}+\||Df_i|\|_{L^1(B)}\right)<+\infty,
$$
hence there exists $f\in L^1(B)$ such that $f_i\to f$ in $L^1(B)$. 

Indeed, let us first notice that, as a consequence of Bishop--Gromov comparison, the measure $\meas$ is locally doubling. Moreover, as a consequence of \cite[Theorem 1.1]{Rajala12} and Bishop--Gromov comparison, we have that, for every $i\in\mathbb N$, the pair $(f_i,|Df_i|)$ satisfies a $1$-Poincare inequality in $B$ according to the definition in \cite[Equation (5)]{HajlaszKoskela}. 

Moreover, an application of \cite[Theorem 9.7]{HajlaszKoskela} where $\Omega,s$ there become $B,n$ here, gives us that 
\begin{equation}\label{eqn:UNASOBOLEV}
\inf_{c\in\mathbb R}\left(\frac{1}{\meas(B)}\int_B|f_i-c|^{n/(n-1)}\de \meas\right)^{(n-1)/n}\leq \frac{C_1}{\meas(B)}\int_B |Df_i|\de \meas,
\end{equation}
for every $i\in\mathbb N$, where $C_1$ is some constant depending on $B$. Notice that \cite[Theorem 9.7]{HajlaszKoskela} can be applied since the hypotheses are met due to the fact that Bishop--Gromov comparison theorem holds, balls are John domains in $X$ (cf. \cite[Corollary 9.5]{HajlaszKoskela}) and $(f_i,|Df_i|)$ has the truncation property (cf. \cite[Theorem 10.3]{HajlaszKoskela}). As a consequence of \eqref{eqn:UNASOBOLEV} we have that, calling $\overline f_i:=(\int_B f_i\de \meas)/\meas(B)$, we get, 
$$
\|f_i-\overline{f_i}\|_{L^{n/(n-1)}(B)}\leq C_2\||Df_i|\|_{L^1(B)},
$$
for every $i>0$, and for some constant $C_2$ depending on $B$. From the previous one easily gets, by using the triangular inequality, that 
$$
\|f_i\|_{L^{n/(n-1)}(B)}\leq C_3\|f_i\|_{L^1(B)}+C_2\||Df_i|\|_{L^1(B)}.
$$
Now, since \cite[Equation (46)]{HajlaszKoskela} is met, we can apply \cite[Theorem 8.1]{HajlaszKoskela} and conclude that, up to subsequences, $f_i\to f$ in $L^1(B)$.
\end{remark}

\begin{lemma}\label{lem:CompactnessPerimeter}
Let $(X,\dist,\meas)$ be a $\CD(K,N)$ space with $K\leq 0$ and $N<+\infty$. Let $g_i\in L^1_{\mathrm{loc}}(X,\meas)$ such that, for every open bounded set $\Omega\subseteq X$, we have 
$$
\sup_{i\in\mathbb N}\left(\|g_i\|_{L^1(\Omega,\meas)}+|Dg_i|(\Omega)\right)<+\infty.
$$

Hence there exists $g\in L^1_{\mathrm{loc}}(X,\dist,\meas)$ such that $g_i\to g$ up to subsequences in $L^1_{\mathrm{loc}}(X,\meas)$.
\end{lemma}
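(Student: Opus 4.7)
The strategy is an exhaustion plus diagonal argument, reducing to the already-known compactness for locally Lipschitz functions in \cref{rem:CompactnessBell}.

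First, fix a basepoint $o\in X$. Since $\CD(K,N)$ measures are locally doubling (by Bishop--Gromov, see \cref{rem:PerimeterMMS2}) and $(X,\dist)$ is locally compact, $X$ is proper and exhausted by the open balls $B_R:=B_R(o)$, $R\in\mathbb{N}$, which are relatively compact. Standard uniqueness of $L^1$ limits on overlapping balls implies that it suffices to show: for every fixed $R$ there is a subsequence of $\{g_i\}$ converging in $L^1(B_R,\meas)$. A diagonal extraction over $R=1,2,\dots$ then produces a single subsequence and a function $g\in L^1_{\mathrm{loc}}(X,\meas)$ with $g_{i_k}\to g$ in $L^1_{\mathrm{loc}}$.

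Fix then $R$ and set $B:=B_R$, $B':=B_{R+1}$. By the very definition of the total variation measure $|Dg_i|$ via relaxation over locally Lipschitz maps (see \cref{def:BVperimetro}), for each $i$ we may choose $\tilde h_i\in\mathrm{Lip}_{\mathrm{loc}}(B')$ with
\[
\|\tilde h_i-g_i\|_{L^1(B',\meas)}\le 1/i,
\qquad
\int_{B'}\lip \tilde h_i\de\meas\le |Dg_i|(B')+1/i.
\]
Multiplying by a cutoff $\phi\in\mathrm{Lip}(X)$ supported in $B'$ and equal to $1$ on $\overline B$, the function $h_i:=\phi\tilde h_i$ belongs to $\mathrm{Lip}_{\mathrm{loc}}(X)$, coincides with $\tilde h_i$ on $B$, and the hypothesis $\sup_i(\|g_i\|_{L^1(B')}+|Dg_i|(B'))<+\infty$ yields
\[
\sup_i\Bigl(\|h_i\|_{L^1(B,\meas)}+\int_{B}\lip h_i\de\meas\Bigr)<+\infty.
\]
Therefore \cref{rem:CompactnessBell} applied to $\{h_i\}$ on $B$ produces a subsequence $h_{i_k}\to g_R$ in $L^1(B,\meas)$ for some $g_R\in L^1(B,\meas)$. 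Since $\|h_{i_k}-g_{i_k}\|_{L^1(B,\meas)}\to 0$ by construction, the same subsequence satisfies $g_{i_k}\to g_R$ in $L^1(B,\meas)$, and the first step of the diagonal procedure is complete.

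The only delicate point is the localized Lipschitz approximation of each $g_i$ with nearly-optimal energy on $B$, which is a direct unpacking of the definition of $|Dg_i|(B')$; the cutoff ensures the resulting approximations lie in $\mathrm{Lip}_{\mathrm{loc}}(X)$ as required by \cref{rem:CompactnessBell}. No curvature information beyond what is already encoded in \cref{rem:CompactnessBell} (doubling, Poincar\'e, John domain property of balls) is used, and the rest of the argument is a routine diagonal and consistency step.
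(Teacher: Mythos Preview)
Your proof is correct and follows essentially the same route as the paper: approximate each $g_i$ on a larger ball by locally Lipschitz functions using the definition of the total variation, invoke \cref{rem:CompactnessBell} on the smaller ball, and finish with a diagonal argument over an exhaustion. The only difference is cosmetic: you insert a cutoff to force the approximants into $\mathrm{Lip}_{\mathrm{loc}}(X)$ so as to match the literal hypothesis of \cref{rem:CompactnessBell}, whereas the paper works directly with $f_i\in\mathrm{Lip}_{\mathrm{loc}}(2B)$ (which suffices since that remark only uses data on $B$).
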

\begin{proof}
Let us fix $x\in X$ and $B:=B_R(x)$, $2B:=B_{2R}(x)$ for some $R>0$. Choosing $\Omega=2B$, 
from the hypothesis and from the definition of the total variation $|Dg_i|$ we have the existence of $f_i\in \mathrm{Lip}_{\mathrm{loc}}(2B)$ such that 
$$
\sup_{i\in\mathbb N}\left(\|f_i\|_{L^1(B,\meas)}+\||Df_i|\|_{L^1(2B,\meas)}\right)<+\infty, \quad \text{and for all $i$ we have} \|f_i-g_i\|_{L^1(B,\meas)}\leq 1/i.
$$
Hence \cref{rem:CompactnessBell} implies that there is a subsequence, still denoted by $f_i$, and $g\in L^1(B,\meas)$ such that we have $f_i\to g$ in $L^1(B,\meas)$. Finally, the choice of $f_i$, we also have $g_i\to g$ in $L^1(B,\meas)$. 

By taking $R_i\to+\infty$ and using a diagonal argument, we get the existence of $g\in L^1_{\mathrm{loc}}(X,\meas)$ as in the statement.
\end{proof} 

We can now prove the main result of this section on the mass splitting of perimeter minimizing sequences.

\begin{theorem}\label{thm:RitoreRosalesNonSmooth}
    Let $K\leq 0$, and let $N\geq 2$. Let $(X,\dist,\mathcal{H}^N,x)$ be a pointed noncompact $\RCD(K,N)$ space. Let us fix a minimizing (for the perimeter) sequence $\{\Omega_k\}_{k\in\mathbb N}$ of Borel sets of volume $V$. Hence, up to passing to a subsequence, there exists a finite perimeter set $\Omega\subseteq X$ and sequence of finite perimeter sets $\{\Omega_k^c\}_{k\in\mathbb N}$ and $\{\Omega_k^d\}_{k\in\mathbb N}$ such that the following holds.
    \begin{itemize}
        \item[(i)] There exists a diverging sequence of radii $\{r_k\}_{k\geq 1}$ such that $$
        \Omega_k^c:=\Omega_k\cap B_{r_k}(x),\qquad \Omega_k^d:=\Omega_k\setminus B_{r_k}(x).
        $$
        \item[(ii)] We have that 
        $$
        \lim_{k\to+\infty}\left(P(\Omega_k^c)+P(\Omega_k^d)\right)=I_X(V).
        $$
        \item[(iii)] $\Omega$ is an isoperimetric region and 
        \begin{equation}\label{eqn:LaPrimaELaSeconda}
        \lim_{k\to +\infty}\mathcal{H}^N(\Omega_k^c)=\mathcal{H}^N(\Omega), \qquad \text{and}, \qquad \lim_{k\to +\infty}P(\Omega_k^c)=P(\Omega).
        \end{equation}
    \end{itemize}
\end{theorem}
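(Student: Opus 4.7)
My plan adapts the Ritor\'e--Rosales cutting argument \cite{RitRosales04} to the nonsmooth $\RCD$ setting, combining $BV$ compactness, a coarea-type selection of divergent cutting radii, and a competitor argument identifying $\Omega$ as isoperimetric. First, the uniform bound $\sup_k [P(\Omega_k) + \mathcal{H}^N(\Omega_k)] < +\infty$ and \cref{lem:CompactnessPerimeter} applied to $g_k := \chi_{\Omega_k}$ produce, up to a subsequence, a set of finite perimeter $\Omega \subset X$ with $\chi_{\Omega_k} \to \chi_\Omega$ in $L^1_\mathrm{loc}(X,\mathcal{H}^N)$; set $V^c := \mathcal{H}^N(\Omega) \in [0,V]$ (by Fatou) and $V^d := V - V^c$.

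Next, I select the divergent cutting radii. A coarea-type integrability of the sphere cross-section along the $1$-Lipschitz function $\mathrm{d}(\cdot,x)$ (in the sense of the $BV$ theory of \cite{Miranda03, AmbrosioDiMarino14}) gives $\int_0^{+\infty} \sigma_k(r)\,\mathrm{d}r \leq \mathcal{H}^N(\Omega_k) = V$, where $\sigma_k(r)$ denotes the quantity controlling the slicing loss $P(E \cap B_r) + P(E \setminus B_r) - P(E) \leq 2\sigma_k(r)$. A diagonal argument combining this integrability with the $L^1_\mathrm{loc}$-convergence and the monotone exhaustion $\mathcal{H}^N(\Omega \cap B_R) \uparrow V^c$ produces $r_k \to +\infty$ satisfying $\sigma_k(r_k) \to 0$ and $\mathcal{H}^N(\Omega_k \cap B_{r_k}(x)) \to V^c$. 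Setting $\Omega_k^c := \Omega_k \cap B_{r_k}(x)$ and $\Omega_k^d := \Omega_k \setminus B_{r_k}(x)$ gives (i) and the mass convergence in (iii); moreover, the $L^1_\mathrm{loc}$-convergence combined with the mass convergence upgrades to $\chi_{\Omega_k^c} \to \chi_\Omega$ in $L^1(X,\mathcal{H}^N)$.

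For the perimeter accounting, the slicing inequality yields $P(\Omega_k^c) + P(\Omega_k^d) \leq P(\Omega_k) + 2\sigma_k(r_k) \to I_X(V)$, which is the upper bound in (ii). The matching lower bound and the isoperimetric character of $\Omega$ follow from a competitor construction: fix any bounded set $A \subset X$ of finite perimeter with $\mathcal{H}^N(A) = V^c$ (which exists by \cref{lem:ProfileOnBoundedSets}). Since $r_k \to +\infty$ and $A$ is bounded, $A$ and $\Omega_k^d$ are eventually at positive distance, so $P(A \cup \Omega_k^d) \leq P(A) + P(\Omega_k^d)$; the volume $V^c + \mathcal{H}^N(\Omega_k^d) \to V$ can be corrected to exactly $V$ via \cref{thm:VariazioniMaggi} at vanishing perimeter cost, producing an admissible competitor for $I_X(V)$ and yielding $I_X(V) \leq P(A) + \liminf_k P(\Omega_k^d)$. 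Taking infimum over such $A$ (using \cref{lem:ProfileOnBoundedSets} once more) gives $\liminf_k P(\Omega_k^d) \geq I_X(V) - I_X(V^c)$. Together with the $L^1$-lower semicontinuity bound $\liminf_k P(\Omega_k^c) \geq P(\Omega) \geq I_X(V^c)$ and the Fatou-type inequality $\liminf_k (a_k + b_k) \geq \liminf_k a_k + \liminf_k b_k$,
\[
I_X(V) \geq \lim_k [P(\Omega_k^c) + P(\Omega_k^d)] \geq \liminf_k P(\Omega_k^c) + \liminf_k P(\Omega_k^d) \geq I_X(V^c) + [I_X(V) - I_X(V^c)] = I_X(V),
\]
forcing equality throughout: (ii) holds, $P(\Omega) = I_X(V^c)$ so that $\Omega$ is isoperimetric, and $\lim_k P(\Omega_k^c) = P(\Omega)$, completing (iii).

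The most delicate piece is the diagonal selection of $r_k$: these radii must simultaneously diverge, send the sphere cross-section $\sigma_k(r_k)$ to zero, and capture the full limiting mass $V^c$ inside $B_{r_k}(x)$. The coarea integrability supplies a window of good radii in each interval $[R,2R]$ via Chebyshev, while $L^1_\mathrm{loc}$-convergence only controls the mass inside $B_R$ for fixed $R$, so a careful interleaving of these two asymptotic regimes is required. Once these radii are in hand, the rest of the argument is essentially bookkeeping on the chain of inequalities above, once the slicing identity and the deformation lemma \cref{thm:VariazioniMaggi} are invoked.
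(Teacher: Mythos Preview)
Your overall strategy matches the paper's: $L^1_{\rm loc}$ compactness via \cref{lem:CompactnessPerimeter}, coarea-based selection of divergent cutting radii with small sphere cross-section, and a competitor construction. Your direct inequality chain for the competitor step (deriving $\liminf_k P(\Omega_k^d) \geq I_X(V) - I_X(V^c)$ and then squeezing) is a bit more streamlined than the paper's argument by contradiction, but the underlying mechanism is the same.

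There is, however, a genuine hypothesis gap. You invoke \cref{lem:ProfileOnBoundedSets} to reduce to \emph{bounded} competitors $A$ with $\mathcal{H}^N(A)=V^c$, but that lemma requires a uniform lower bound $\mathcal{H}^N(B_1(\cdot))\geq v_0>0$, which is \emph{not} among the hypotheses of \cref{thm:RitoreRosalesNonSmooth}. The paper avoids this: in its contradiction argument it takes an arbitrary (possibly unbounded) $E$ with $\mathcal{H}^N(E)=V^c$ and $P(E)<P(\Omega)$, and truncates to $E_k:=E\cap B_{t_k}(x)$ with $t_k\in[r_{k-1},r_k]$ chosen by coarea so that $P(B_{t_k}(x),E^{(1)})\leq V^c/k$. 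Your argument can be repaired the same way, replacing the appeal to \cref{lem:ProfileOnBoundedSets} by this truncation of an arbitrary near-minimizer for $I_X(V^c)$. A related imprecision is your volume correction via \cref{thm:VariazioniMaggi}: that theorem requires topological interior or exterior points and yields constants $\eta_1,C_1$ depending on the specific set, neither of which is available for a generic bounded $A$ (or uniformly for the varying $A\cup\Omega_k^d$). The paper instead fixes density-$1$ and density-$0$ points $p,q$ of the competitor and adds/subtracts small balls $B_{\rho_{p,k}}(p),\,B_{\rho_{q,k}}(q)$; since the volume defect tends to zero the radii $\rho_{p,k},\rho_{q,k}\to 0$, and the Bishop--Gromov bound $P(B_r)\leq s(N,K/(N-1),r)\to 0$ controls the perimeter cost without any $v_0$ assumption.
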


\begin{proof}
Let $f_k:=\chi_{\Omega_k}$. From \cref{lem:CompactnessPerimeter} we get that there exists $g\in L^1_{\mathrm{loc}}(X,\mathcal{H}^N)$ such that, up to passing to subsequences, $f_k\to g$ in $L^1_{\mathrm{loc}}(X,\mathcal{H}^N)$. Hence $g=\chi_\Omega$ for some Borel set $\Omega\subseteq X$, and $\mathcal{H}^N(\Omega)\leq V$ by Fatou's Lemma. Moreover, by lower semicontinuity, we get that $P(\Omega)\leq I_X(V)$. Hence if $\mathcal{H}^N(\Omega)=V$ we get the conclusion with $\Omega_k^c:=\Omega_k$ and $\Omega_k^d:=\emptyset$.
Suppose from now on that $\mathcal{H}^N(\Omega)<V$.

Let us set $r_0:=0$. We want to define $r_{k}$ inductively in such a way that, up to eventually passing to a subsequence, we have 
\begin{itemize}
    \item[(a)] $\{r_k\}_{k\in\mathbb N}$ is divergent and $r_{k}-r_{k-1}\geq k$ for every $k\geq 1$; 
    \item[(b)] $P(\Omega_k\cap B_{r_k}(x))=P(\Omega_k,B_{r_k}(x))+P(B_{r_k}(x),\Omega_k^{(1)})$ for every $k\geq 1$;
    \item[(c)] $P(\Omega_k\setminus B_{r_k}(x))=P(\Omega_k,X\setminus B_{r_k}(x))+P(B_{r_k}(x),\Omega_k^{(1)})$ for every $k\geq 1$;
    \item[(d)] $P(B_{r_k}(x),\Omega_k^{(1)})\leq V/k$ for every $k\geq 1$;
    \item[(e)] there holds
    $$
    \int_{B_{r_k}(x)}|\chi_{\Omega_k}-\chi_{\Omega}|\de \mathcal{H}^N\leq 1/k,
    $$
    for every $k\geq 1$. 
\end{itemize}
First fix $r'_k\geq r_{k-1}+2k$. Hence, up to passing to a subsequence in $k$, we can assume (e) holds with $r'_k$ in place of $r_k$. This can be done since $\chi_{\Omega_k}\to\chi_{\Omega}$ in $L^1_{\rm{loc}}(X,\mathcal{H}^N)$. Moreover notice that, from the coarea formula we have that 
$$
\int_I P(B_s(x),\Omega_k^{(1)})\de s\leq \mathcal{H}^N(\Omega_k^{(1)})=V,
$$
on every interval $I\subseteq \mathbb R$.
Hence, by the previous estimate, we can find an $r_k\in[r'_k-k,r'_k]$ such that (d) holds. Moreover, such an $r_k$ can be taken in such a way that (b) and (c) hold, see \cite[Corollary 2.6]{AntonelliPasqualettoPozzetta}. Moreover, since $r_k\leq r'_k$, (e) still holds. Finally, since $r_k\geq r'_k-k\geq r_{k-1}+k$, we get that the sequence diverges and (a) holds as well. 

Let us prove item (ii). We have, for every $k\geq 1$, that 
$$
I_X(V)\leq P(\Omega_k)\leq P(\Omega_k^c)+P(\Omega_k^d)\leq P(\Omega_k)+2P(B_{r_k}(x),\Omega_k^{(1)}),
$$
where we are using item (b), (c), and (d) proved above. Taking $k\to +\infty$ we thus get item (ii). 

Let us now prove the first convergence in \eqref{eqn:LaPrimaELaSeconda}. By the triangle inequality we have 
$$
|\mathcal{H}^N(\Omega_k^c)-\mathcal{H}^N(\Omega)|\leq \int_{B_{r_k}(x)}|\chi_{\Omega_k}-\chi_{\Omega}|\de\mathcal{H}^N + \mathcal{H}^N(\Omega\setminus B_{r_k}(x)).
$$
Taking $k\to +\infty$, and taking into account item (a), and (e) above, we get the sought convergence.

Let us now prove that $P(\Omega)\leq \liminf_{k\to +\infty} P(\Omega_k^c)$. Indeed, this is a consequence of the fact that, by construction, for every $k\geq 1$ we have 
$$
\int_{B_{r_k}(x)}|\chi_{\Omega_k}-\chi_{\Omega}|\de\mathcal{H}^N=\int_{B_{r_k}(x)}|\chi_{\Omega_k^c}-\chi_{\Omega}|\de\mathcal{H}^N.
$$
Hence the previous equality, together with item (a) above, shows that $\chi_{\Omega_k^c}\to\chi_{\Omega}$ in $L^1_{\mathrm{loc}}(X,\mathcal{H}^N)$. Hence, by lower semicontinuity of the perimeter, we get the sought claim.

Let us call $\mathcal{H}^N(\Omega):=V_0$. Let us now prove that $\Omega$ is an isoperimetric region. If not, there exists a Borel set $E$ such that $\mathcal{H}^N(E)=V_0$ and $P(E)<P(\Omega)$. Again by the coarea formula, and since item (a) holds, there exists, for every $k\geq 1$, a $t_k\in [r_{k-1},r_k]$ such that 
$$
P(B_{t_k}(x),E^{(1)})\leq V_0/k.
$$
Moreover, $t_k$ can be chosen in such a way that $P(E\cap B_{t_k}(x))=P(E,B_{t_k}(x))+P(B_{t_k}(x),E^{(1)})$, see \cite[Corollary 2.6]{AntonelliPasqualettoPozzetta}.
Notice that if $E_k:=E\cap B_{t_k}(x)$ we have $\mathcal{H}^N(E_k)\to V_0$ as $k\to +\infty$.

Let us now fix $p\in E^{(1)}$ and $q\in E^{(0)}$. Recall by \cref{rem:PerimeterMMS2} that $P(B_r(p))\leq s(N,K/(N-1),r)$ and $P(B_r(q))\leq s(N,K/(N-1),r)$ hold for every $r>0$.

Since $\mathcal{H}^N(\Omega_k^d)\to V-V_0$ as $k\to +\infty$, we have that for $k$ large enough there exist $\rho_{p,k},\rho_{q,k}<1$ such that 
$$
\mathcal{H}^N((E_k\cup B_{\rho_{q,k}}(q))\setminus B_{\rho_{p,k}}(p))=V-\mathcal{H}^N(\Omega_k^d),
$$
and moreover $\rho_{q,k},\rho_{p,k}\to 0$ as $k\to +\infty$. This is done by arguing verbatim as in the proof of \cite[Lemma 3.1]{AFP21}.
Let us call $E'_k:=(E_k\cup B_{\rho_{q,k}}(q))\setminus B_{\rho_{p,k}}(p)$ and $T_k:=E'_k\cup \Omega_k^d$. Hence
\begin{equation}
\begin{split}
P(T_k)&\leq P(E_k)+P(B_{\rho_{q,k}}(q))+P(B_{\rho_{p,k}}(p))+P(\Omega_k^d) \leq P(E,B_{t_k}(x)) \\&+P(B_{t_k}(x),E^{(1)})+s(N,K/(N-1),\rho_{q,k})+s(N,K/(N-1),\rho_{p,k})+P(\Omega_k^d) \\
&\leq P(E)+V_0/k+s(N,K/(N-1),\rho_{q,k})+s(N,K/(N-1),\rho_{p,k})+P(\Omega_k^d).
\end{split}
\end{equation}
Hence, taking $k\to +\infty$ in the previous inequality, we get 
\begin{equation}
\begin{split}
I_X(V)\le \liminf_{k\to +\infty}P(T_k)&\leq P(E)+\liminf_{k\to+\infty}P(\Omega_k^d)<P(\Omega)+\liminf_{k\to+\infty}P(\Omega_k^d)\\&\leq \liminf_{k\to+\infty}P(\Omega_k^c)+\liminf_{k\to+\infty}P(\Omega_k^d)\leq I_X(V),
\end{split}
\end{equation}
where in the previous inequality we are using $P(\Omega)\leq \liminf_{k\to+\infty}P(\Omega_k^c)$, that we proved above, and the item (ii). This is a contradiction.

In order to conclude the proof we should just prove the second of \eqref{eqn:LaPrimaELaSeconda}. Moreover, since we already proved $P(\Omega)\leq\liminf_{k\to+\infty}P(\Omega_k^c)$, we just need to prove that $P(\Omega)\geq\limsup_{k\to+\infty}P(\Omega_k^c)$. If not, up to subsequence 
$$
P(\Omega) < \lim_{k\to+\infty}P(\Omega_k^c).
$$
In order to reach a contradiction from the previous inequality, and thus conclude the proof, it suffices the run verbatim the argument we used above to prove that $\Omega$ is an isoperimetric region, but with $\Omega$ in place of $E$.
\end{proof}

\begin{remark}\label{rem:PerimetriAZero}
As one can notice by a careful inspection of the previous reasoning, the proof of \cref{thm:RitoreRosalesNonSmooth} is likely to hold in a metric measure setting more general than the one of $\RCD(K,N)$ spaces $(X,\dist,\haus^N)$. A key property is the vanishing, as $r\to 0$, of the perimeter of the balls of radius $r$ around points of the metric measure space. Since this goes beyond the scope of this note, we will not discuss it further, but it will be subject of further investigations.
\end{remark}

\section{The isoperimetric problem via direct method}\label{sec:MAIN}

We shall need the following useful result stating the equality between the isoperimetric profile of a space $X$ and the one of an arbitrary disjoint union of $X$ with some of its pmGH limits at infinity.

\begin{proposition}\label{prop:ProfileDecomposition}
Let $K\leq 0$ and let $N\geq 2$. Let $(X,\dist,\mathcal{H}^N)$ be a noncompact $\RCD(K,N)$ space. Assume there exists $v_0>0$ such that $\mathcal{H}^N(B_1(x))\geq v_0$ for every $x\in X$. Let $\{p_{i,j} \st i \in \N\}$ be a sequence of points on $X$, for $j=1,\ldots,\overline{N}$ where $\overline{N}\in\N \cup \{+\infty\}$. Suppose that each sequence $\{p_{i,j}\st i \in \N\}$ is diverging along $X$ and that $(X,\dist,\mathcal{H}^N,p_{i,j})$ converges in the pmGH sense  to a pointed $\RCD(K,N)$ space $(X_j,\dist_j,\mathcal{H}^N,p_j)$. Defining
\begin{equation}\label{eqn:GeneralizedIsoperimetricProfile}
I_{X\sqcup_{j=1}^{\overline N}X_j}(v):=\inf\left\{P(E)+\sum_{j=1}^{\overline N}P(E_j):E\subseteq X,E_j\subseteq X_j, \mathcal{H}^N(E)+\sum_{j=1}^{\overline N}\mathcal{H}^N(E_j)=v\right\},
\end{equation}
it holds $I_{X\sqcup_{j=1}^{\overline N} X_j}(v) = I_X(v)$ for any $v>0$.
\end{proposition}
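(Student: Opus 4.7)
The inequality $I_{X\sqcup_{j=1}^{\overline N} X_j}(v) \le I_X(v)$ follows immediately by taking all $E_j=\emptyset$ and using as a competitor any $E\subset X$ with $\haus^N(E)=v$. The bulk of the work is the reverse inequality, for which my plan is an approximation-and-gluing strategy: transport finite perimeter sets from the ``limits at infinity'' $X_j$ back into $X$ along the diverging sequences $p_{i,j}$ using \cref{lem:ApproxInMassa}, then glue them to a bounded approximation of $E$, making use of the divergence hypothesis to ensure disjointness.

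Fix $\epsilon>0$ and an admissible tuple $(E,\{E_j\}_{j\le \overline N})$ that nearly realizes $I_{X\sqcup_j X_j}(v)$. First I would truncate: if $\overline N=+\infty$ we may assume $P(E)+\sum_j P(E_j)<\infty$ (otherwise the claim is trivial), so pick $J<\infty$ with $\sum_{j>J} P(E_j)<\epsilon$ and $\sum_{j>J}\haus^N(E_j)<\epsilon$ (finite since the total volume is $v$). Next I would replace $E$ by a bounded truncation $E':=E\cap B_R(x_0)$, choosing $R$ via the coarea formula so both the perimeter and volume losses are $O(\epsilon)$, and similarly approximate each $E_j\subset X_j$ (for $j\le J$) by a bounded finite perimeter set $E_j'\subset X_j$ with $P(E_j')\le P(E_j)+\epsilon/J$ and $|\haus^N(E_j')-\haus^N(E_j)|\le \epsilon/J$. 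Note that $X_j$ inherits the volume lower bound $\haus^N(B_1(\cdot))\ge v_0$ by pmGH convergence combined with volume continuity in the $N$-dimensional setting.

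Applying \cref{lem:ApproxInMassa} to the pmGH-converging sequence $(X,\dist,\haus^N,p_{i,j})\to(X_j,\dist_j,\haus^N,p_j)$, for each $j\le J$ I obtain uniformly bounded sets $F_j^{(i)}\subset X$ situated near $p_{i,j}$ with $\haus^N(F_j^{(i)})=\haus^N(E_j')$ and $P(F_j^{(i)})\to P(E_j')$ as $i\to\infty$; a diagonal extraction produces a single subsequence, still denoted by $i$, along which this convergence holds for every $j\le J$ simultaneously. The hypothesis $\lim_i \dist(p_{i,j},o)=\lim_i\dist(p_{i,j},p_{i,\ell})=+\infty$ guarantees that for $i$ large the sets $E',F_1^{(i)},\dots,F_J^{(i)}$ are pairwise disjoint (since each $F_j^{(i)}$ lies in a fixed-radius neighborhood of $p_{i,j}$ and $E'$ is contained in a fixed ball around $x_0$), so $F^{(i)}:=E'\cup\bigcup_{j\le J}F_j^{(i)}$ satisfies $P(F^{(i)})=P(E')+\sum_{j\le J}P(F_j^{(i)})$.

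The final and most delicate step is adjusting the measure of $F^{(i)}$ from $\haus^N(F^{(i)})=\haus^N(E')+\sum_{j\le J}\haus^N(E_j')$ to exactly $v$; the discrepancy is $O(\epsilon)$. I would use \cref{thm:VariazioniMaggi} on a bounded open region of $X$ disjoint from $F^{(i)}$ (such a region exists by noncompactness of $X$ and boundedness of $F^{(i)}$) to add or remove the required amount of mass, with perimeter cost controlled by \cref{prop:IsopVolumiPiccoli} and hence of order $\epsilon^{(N-1)/N}$. Letting $i\to\infty$ first and then $\epsilon\to 0$ gives $I_X(v)\le P(E)+\sum_j P(E_j)$, and taking the infimum over competitors concludes. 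The main obstacle I anticipate is the bounded-approximation step inside each $X_j$ when $E_j$ is unbounded: one needs to choose the truncation radius so that the boundary contribution $\haus^{N-1}(\partial B_R(p_j)\cap E_j^{(1)})$ is small, which a coarea argument supplies for a.e. $R$, together with the convergence $P(E_j,B_R(p_j))\to P(E_j)$. A secondary technical point is coordinating the subsequences produced by \cref{lem:ApproxInMassa} across all $j\le J$, which is standard by diagonal extraction.
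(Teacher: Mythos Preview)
Your strategy is essentially the paper's: reduce to bounded competitors, truncate the $j$-sum, pull the $E_j$'s back into $X$ via \cref{lem:ApproxInMassa}, glue disjointly, and fix the $O(\varepsilon)$ volume defect. Two points need correction, however.

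First, the proposition does \emph{not} assume $\lim_i \dist(p_{i,j},p_{i,\ell})=+\infty$ for $j\neq\ell$; it only assumes that each sequence $\{p_{i,j}\}_i$ diverges in $X$. Hence your single-index diagonal extraction does not guarantee disjointness of the $F_j^{(i)}$. The fix is to place the pieces \emph{sequentially}: having fixed $E'$ and $F_1,\ldots,F_{k-1}$ (each contained in a bounded region), choose $i_k$ so large that $p_{i_k,k}$ lies outside a ball containing all of them, and take $F_k$ from the subsequence produced by \cref{lem:ApproxInMassa} at index $i_k$. This is what the paper means by ``repeatedly applying'' \cref{lem:ApproxInMassa}.

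Second, your mass-adjustment step is miscited. \cref{thm:VariazioniMaggi} requires $P(E,A)>0$, which fails on an open set $A$ disjoint from $F^{(i)}$; and \cref{prop:IsopVolumiPiccoli} is a \emph{lower} bound on perimeter, not the upper bound you need. The paper simply adds a disjoint ball $B_{r_\delta}(o)$ carrying the missing volume $v(j_\delta)$, whose perimeter is bounded by $s(N,K/(N-1),r_\delta)\to 0$ via Bishop--Gromov (\cref{rem:PerimeterMMS2}). With these two adjustments your argument goes through and coincides with the paper's proof.
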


\begin{proof}
The inequality $I_{X\sqcup_{j=1}^{\overline N} X_j}\le I_X$ trivially follows from the definition of isoperimetric profile, so we need to prove the opposite inequality. By a minor variant of \cref{lem:ProfileOnBoundedSets}, cf. \cite[Lemma 2.19]{AFP21}, one can prove that
\begin{equation*}
\begin{split}
 I_{X\sqcup_{j=1}^{\overline N}X_j}(v):=\inf\bigg\{P(E)+\sum_{j=1}^{\overline N}P(E_j)
 \st &E\subset X,E_j\subset X_j, \mathcal{H}^N(E)+\sum_{j=1}^{\overline N}\mathcal{H}^N(E_j)=v, \\
 &E,E_j\text{ bounded for any }j\bigg\}.
\end{split}
\end{equation*}
Now let $v>0$. Let us prove the result in the case $\overline{N}=+\infty$, as it will be clear that the case of $\overline{N}\in \N$ follows by the analogous simplified argument.
For any $\eps>0$ there are bounded sets $E\subset X,E_j\subset X_j$ such that
\[
\mathcal{H}^N(E)+\sum_{j=1}^{\infty}\mathcal{H}^N(E_j)=v,
\qquad
P(E)+\sum_{j=1}^{\infty}P(E_j) \le  I_{X\sqcup_{j=1}^{\infty}X_j}(v) + \eps.
\]
For any $\delta >0$ there is $j_\delta$ such that $v(j_\delta)\eqdef \sum_{j_\delta+1}^\infty \haus^N(E_j)<\delta$ and $\sum_{j_\delta+1}^\infty P(E_j)<\delta$. Taking $\delta<v_0$, since $\{p_{i,j}\}_i$ is diverging, repeatedly applying \cref{lem:ApproxInMassa} we can find disjoint sets $B_{r_\delta}(o),F_1,\ldots,F_{j_\delta} \subset X$ contained in balls mutually located at positive distance such that
\[
\haus^N(B_{r_\delta}(o))= v(j_\delta),\qquad
\haus^N(F_j)=\haus^N(E_j) ,
\qquad
P(F_j)\le P(E_j)+\eps/j_\delta,
\]
for any $j=1,\ldots,j_\delta$, and $E$ is located at positive distance from $B_{r_\delta}(o)$. Hence $E\cup B_{r_\delta}(o) \cup F_1 \cup \ldots\cup F_{j_\delta}$ has measure equal to $v$ and
\[
\begin{split}
I_X(v) &\le P(E) + P(B_{r_\delta}(o)) + \sum_{j=1}^{j_\delta} P(F_j) \le  P(E) + P(B_{r_\delta}(o))+\sum_{j=1}^{j_\delta}P(E_j) +\eps\\& \le s(N,K/(N-1),r_\delta)+
I_{X\sqcup_{j=1}^{\infty}X_j}(v) + 2\eps-\delta .
\end{split}
\]
Since $\eps, \delta$ are arbitrary, and $r_\delta\to0$ as $\delta\to0$, we can let $\delta\to 0$ and then $\eps\to0$ to get the desired inequality.
\end{proof}

\subsection{Proof of the main results}

We are ready for proving our main results leading to the description of the behavior of perimeter minimizing sequences on $\RCD$ spaces. We start from the generalized compactness result in \cref{thm:GeneralizedCompactness}.

\begin{proof}[Proof of \cref{thm:GeneralizedCompactness}]
Up to subsequence, we may assume $P(E_i)\to A$, $\mathcal{H}^N(E_i)\to W$. From the isoperimetric inequality for small volumes \cref{prop:IsopVolumiPiccoli}, we get that if $A=0$ then $W=0$, and then the statement trivializes. Hence we may assume $A>0$. Moreover we may assume $W>0$ as well without loss of generality. Furthermore, we shall identify $E_i$ with the set of density $1$ points $E_i^{(1)}$ in this proof for ease of notation.

The proof follows the same strategy of \cite[Theorem 4.6]{AFP21}, except that here we deal with a sequence of different ambient spaces $X_i$ and the sequence $E_i$ has no a priori minimality properties. However, several arguments do not make use of such properties, and then an analogous proof can be carried out also in this setting. We refer the reader to \cite[Theorem 4.6]{AFP21} for detailed proofs of the first three steps below, which are based on an iterative application of \cref{lem:CoCo} together with \cref{lem:MasLowBound}.

\begin{itemize}
    \item[Step 1.] Up to passing to a subsequence in $i$, we claim that for any $i$ there exist an increasing sequence of natural numbers $N_i\ge1$ with limit $\overline{N}\eqdef \lim_i N_i \in \N\cup\{+\infty\}$, points $p_{i,1},\ldots, p_{i,N_i} \in X_i$ for any $i$, radii $R_j\ge1$ and numbers $\eta_j\in(0,1]$ defined for $j<\overline{N}$, and, if $\overline{N}<+\infty$, also a sequence of radii $R_{i,\overline{N}}\ge 1$, such that
    \begin{equation}\label{eq:Step1}
        \begin{split}
             \lim_i \dist_i(p_{i,j},p_{i,k}) =+\infty& \qquad\forall\,j\neq k<\overline N+1  ,\\
             \dist_i(p_{i,j},p_{i,k})\ge R_j+R_k+2& \qquad 
              \forall\,i\in \N, \forall\, j\neq  k\leq N_i, \\
              &\qquad j\neq \overline{N}, k \neq \overline{N} ,\\
            \exists \, \lim_i \mathcal{H}^N(E_i \cap B_{R_j}(p_{i,j})) = w_j' >0  & \qquad \forall\, j<\overline N,\\
             P(E_i,\partial B_{R_j}(p_{i,j})) = 0& \qquad \forall\,i, \forall\, j\le N_i , j \neq\overline N , \\
            P(B_{R_j}(p_{i,j}),E_i) \le \frac{\eta_j}{2^j} & \qquad \forall\,i, \forall\, j\le N_i , j \neq\overline N, \\
            \text{if also $\overline{N}=+\infty$, then }  W\ge  \lim_i  \sum_{j=1}^{N_i} \mathcal{H}^N( E_i\cap  B_{R_j}(p_{i,j}) ), & \\
            \text{if instead $\overline{N}<+\infty$, then $\lim_i R_{i,\overline{N}} = + \infty$,}&\\ 
             P(E_i,\partial B_{R_{i,\overline N}}(p_{i,\overline N})) = 0 & \qquad \forall\,i \st N_i=\overline{N}, \\
            P(B_{R_{i,\overline N}}(p_{i,\overline N}),E_i) \le \frac{1}{2^{\overline N}} & \qquad \forall\,i \st N_i=\overline{N},\\
            \dist_i\left(\partial B_{R_{i,\overline{N}}}(p_{i,\overline{N}}) , \partial B_{R_{j}}(p_{i,j}) \right) > 2& \qquad \forall\,i\st N_i=\overline{N}, \forall \,j\neq\overline{N},\\
            W = \lim_i \mathcal{H}^N\left(  B_{R_{i,\overline{N}}}(p_{i,\overline{N}}) \cap \left( E_i  \setminus \bigcup_{j=1}^{\overline{N}-1} B_{R_j}(p_{i,j}) \right)\right) & + \sum_{j=1}^{\overline{N}-1} \mathcal{H}^N(  E_i \cap  B_{R_j}(p_{i,j})) .\\
        \end{split}
    \end{equation}

\item[Step 2.] We claim that if $\overline{N}=+\infty$ then
    \begin{equation}\label{eq:Step2a}
        W= \lim_i \sum_{j=1}^{N_i} \mathcal{H}^N( E_i \cap  B_{R_j}(p_{i,j}) ).
    \end{equation}
    Moreover, we claim that, up to passing to a subsequence in $i$, there exist sequences of radii $\{T_{i,j}\}_{i \in \N}$ such that $T_{i,j} \in (R_j,R_j+1)$ for any $j<\overline{N}$, and $T_{i,\overline{N}} \in (R_{i,\overline{N}}, R_{i,\overline{N}}+1)$ if $\overline{N}<+\infty$, such that the following hold \begin{equation}\label{eq:AsympMassDecomp1}
    \begin{split}
        B_{T_{i,j}}(p_{i,j}) \cap B_{T_{i,k}}(p_{i,k}) = \emptyset & \qquad \forall\,i\in \N, \forall\, j\neq  k\leq N_i,  j\neq \overline{N}, k \neq \overline{N} ,\\
        \lim_i T_{i,j} = T_j < + \infty & \qquad \forall\,j < \overline{N}, \\
        P(E_i, \partial B_{T_{i,j}}(p_{i,j}) ) = 0
         & \qquad \forall i\in\mathbb N,\,\, \forall 1\leq j \leq N_i , \, j\neq \overline{N}\\
        \text{if also $\overline{N}<+\infty$, then $\lim_i T_{i,\overline{N}} = + \infty$ and }&\\ 
        \partial B_{T_{i,\overline{N}}}(p_{i,\overline{N}}) \cap \partial B_{T_{i,j}}(p_{i,j}) =\emptyset& \qquad \forall\,i\st N_i=\overline{N}, \forall \,j<\overline{N}, \\
        P(E_i, \partial B_{T_{i,\overline{N}}}(p_{i,\overline{N}}) ) =0 & \qquad \forall \, i \in \N.
    \end{split}
    \end{equation}
    Moreover
    \begin{equation}\label{eq:Step2b}
        \lim_i \sum_{j=1}^{N_i} P( B_{T_{i,j}}(p_{i,j}),E_i) = 0.
    \end{equation}
    
\item[Step 3.] We claim that letting $E_i^v \eqdef E_i \setminus \bigcup_{j=1}^{N_i} \left(E_i \cap B_{T_{i,j}}(p_{i,j})\right)$, then
    \begin{equation}\label{eq:Step3a}
            \lim_i \mathcal{H}^N(E_i^v)=0,
    \end{equation}
    and that, if $\overline{N}=+\infty$, then
    \begin{equation}\label{eq:Step3b}
            W= \lim_i \sum_{j=1}^{N_i} \mathcal{H}^N( E_i \cap  B_{T_{i,j}}(p_{i,j}) ) = \sum_{j=1}^{+\infty} \lim_i \mathcal{H}^N( E_i \cap  B_{T_{i,j}}(p_{i,j}) ).
    \end{equation}

\item[Step 4.] Denoting $E_{i,j}:= E_i \cap B_{T_{i,j}}(p_{i,j})$ and $ E_{i,\overline N}\eqdef B_{T_{i,\overline{N}}}(p_{i,\overline{N}}) \cap E_i  \setminus \bigcup_{j=1}^{\overline{N}-1} B_{T_{i,j}}(p_{i,j}) $ if $\overline{N}<+\infty$, we claim that for any $j < \overline{N}+1$ there exists an $\RCD(K,N)$ space $(Y_j,\dist_{Y_j},\mathcal{H}^N)$, points $p_j \in Y_j$ and Borel sets $F_j\subset Y_j$ such that
    \begin{equation}\label{eq:Itemiii1}
        \begin{split}
            (X_i,\dist_i,\mathcal{H}^N,p_{i,j}) \xrightarrow[i]{} (Y_j,\dist_{Y_j},\mathcal{H}^N, p_j) & \qquad \text{in the $pmGH$ sense for any $j$},\\
            E_{i,j} \xrightarrow[i]{} F_j \subset Y_j& \qquad \text{in the $L^1$-strong sense for any $j<\overline{N}$}, \\
            \mathcal{H}^N(E_{i,j}) \xrightarrow[i]{} \mathcal{H}^N(F_j)& \qquad \forall \,j<\overline{N} \\
            \liminf_i P(E_{i,j}) \ge P (F_j)& \qquad \forall \,j<\overline{N},
        \end{split}
    \end{equation}
    and if $\overline{N}<+\infty$ then
    \begin{equation}\label{eq:Itemiii2}
        \begin{split}
            E_{i,\overline{N}}
            \xrightarrow[i]{} F_{\overline{N}} \subset Y_{\overline{N}}  & \qquad \text{in the $L^1$-strong sense},\\
            \mathcal{H}^N(E_{i,\overline N})
            \xrightarrow[i]{} \mathcal{H}^N(F_{\overline{N}}), & \\
            \liminf_i P(E_{i,\overline N})
            \geq P (F_{\overline{N}}).
        \end{split}
    \end{equation}
Indeed, using Gromov Precompactness Theorem and \cref{prop:SemicontinuitaAmbrosioBrueSemola}, together with the suitable diagonal argument, one easily gets spaces $(Y_j,\dist_{Y_j},\haus^N,p_j)$ and sets $F_j\subset Y_j$ as in \eqref{eq:Itemiii1} and \eqref{eq:Itemiii2}. The $L^1$-strong convergence of $E_{i,j}$ to $F_j$ with $j<\overline N$ is guaranteed by \cref{prop:SemicontinuitaAmbrosioBrueSemola} because $T_{i,j}\to T_j$ as $i\to +\infty$. Let us show that also $E_{i,\overline{N}}$ converges in $L^1$-strong to $F_{\overline N}$. This easily follows from the last line of \eqref{eq:Step1} and the definitions of $T_{i,j}$. Putting together the last line of \eqref{eq:Step1}, \eqref{eq:Step3b}, \eqref{eq:Itemiii1}, \eqref{eq:Itemiii2}, we deduce \eqref{eq:GeneralizedCMPcontM}.

Let us now show \eqref{eq:GeneralizedCMPsemicontP}. Assume $\overline{N}=+\infty$. For any $M\in\N$ we have
\[
\begin{split}
    \sum_{j=1}^M P(F_j) &\le \liminf_i \sum_{j=1}^M P(E_{i,j}) = \liminf_i \sum_{j=1}^M P(E_i, B_{T_{i,j}}(p_{i,j})) + P(B_{T_{i,j}}(p_{i,j}), E_i) \\&\le
    \liminf_i P(E_i) + \sum_{j=1}^{N_i}P(B_{T_{i,j}}(p_{i,j}), E_i),
\end{split}
\]
and using \eqref{eq:Step2b} and letting $M\to+\infty$, we obtain \eqref{eq:GeneralizedCMPsemicontP}. In case $\overline{N}<+\infty$, the analogous computation yields \eqref{eq:GeneralizedCMPsemicontP}.

\item[Step 5] Suppose now that $E_i$ is isoperimetric for any $i$, we want to prove the last part of the statement. Assume by contradiction that $F_{j_0}$ is not isoperimetric for some $j_0$, then there exists a bounded set $\widetilde{F}_{j_0}\subset Y_{j_0}$ such that $P(\widetilde{F}_{j_0}) \le P(F_{j_0})-\eps$ and $\haus^N(\widetilde{F}_{j_0}) =\haus^N(F_{j_0})$, for $\eps>0$. By \cref{prop:SemicontinuitaAmbrosioBrueSemola} there exists a sequence $\widetilde{E}_{i,j_0}\subset B_r(p_{i,j_0})\subset X_i$, for a fixed $r>0$, such that $\haus^N(\widetilde{E}_{i,j_0})\to \haus^N(\widetilde{F}_{j_0})$ and $P(\widetilde{E}_{i,j_0})\to P(\widetilde{F}_{j_0})$.
Therefore we can consider the sequence
\begin{equation}
    \widetilde{E}_i \eqdef \left( E_i \setminus \left( E_i^v \cup E_{i,j_0} \right) \right) \cup \left(\widetilde{E}_{i,j_0} \cup B_{\rho_i}(q_i) \setminus B_{\tilde \rho_i}(\tilde q_i)\right),
\end{equation}
where $\rho_i,\tilde\rho_i\ge 0$, and $B_{\rho_i}(q_i),  B_{\tilde \rho_i}(\tilde q_i)\subset X_i$ are some balls such that $\haus^N(\widetilde{E}_i)=\haus^N(E_i)$, $\dist (B_{\rho_i}(q_i),  B_{\tilde \rho_i}(\tilde q_i))>0$, and $\sup_i \dist(q_i, p_{i,j_0}) + \dist(\tilde q_i, p_{i,j_0}) <+\infty$. By construction, taking into account \eqref{eq:Step1}, we have that $\rho_i, \tilde \rho_i\to 0$ and then volume and perimeter of the balls $B_{\rho_i}(q_i),  B_{\tilde \rho_i}(\tilde q_i)$ tend to zero by Bishop--Gromov comparisons. By \eqref{eq:Step2b} we obtain
\[
\begin{split}
    I_{X_i}(\haus^N(E_i)) &\le P(\widetilde{E}_i ) \\&=
    P(E_i) -P(E_i^v) - P(E_{i,j_0}) +
    P(\widetilde{E}_{i,j_0}) + P(B_{\rho_i}(q_i)) + P(B_{\tilde \rho_i}(\tilde q_i))+ o(1) \\
    &= I_{X_i}(\haus^N(E_i)) -P(E_i^v) - P(E_{i,j_0}) +
    P(\widetilde{E}_{i,j_0}) + o(1).
\end{split}
\]
Since $\limsup_i  - P(E_{i,j_0}) + P(\widetilde{E}_{i,j_0}) \le -P(F_{j_0}) + P(\widetilde{F}_{j_0}) \le -\eps$, we get a contradiction. Moreover, by the very same computation, we also deduce that
\begin{equation}\label{eq:PerimetroVanishZero}
    \lim_i P(E_i^v) =0,
\end{equation}
for otherwise we would get another contradiction. And taking into account \eqref{eq:PerimetroVanishZero} and exploiting the minimality of the $E_i$'s as before, one easily gets the continuity of the sequence of perimeters in \eqref{eq:GeneralizedCMPcontP}.
\end{itemize}
\end{proof}

We can now prove the asymptotic mass decomposition result contained in \cref{thm:MassDecompositionINTRO}, which improves and generalizes previous results contained in \cite{Nar14} and \cite{AFP21}.

\begin{proof}[Proof of \cref{thm:MassDecompositionINTRO}]
The proof is an adaptation of \cite[Theorem 4.6]{AFP21}, which is based on previous ideas in \cite{Nar14}.
We divide the proof in steps. First of all, starting from the sequence $\{\Omega_i\}_{i\in\mathbb N}$, we know there exist sequences $\{\Omega_i^c\}_{i\in\mathbb N}$ and $\{\Omega_i^d\}_{i\in\mathbb N}$ as in \cref{thm:RitoreRosalesNonSmooth}. Let 
$$
W:=\lim_{i\to+\infty} \mathcal{H}^N(\Omega_i^d).
$$
If $W=0$ the theorem is trivially true since there is no loss of mass at infinity. 

By arguing verbatim as in the proof of \cref{thm:GeneralizedCompactness}, we get all the claims in Step 1, Step 2, Step 3, and Step 4 with the constant sequence $\{(X,\dist,\mathcal{H}^N)\}$ here in place of the sequence $\{(X_i,\dist_i,\mathcal{H}^N)\}_{i\in\mathbb N}$ there; with $\{\Omega_i^d\}_{i\in\mathbb N}$ here in place of $\{E_i\}_{i\in\mathbb N}$ there; with $\{Z_j\}_{1\leq j\leq \overline N}$ here in place of $\{F_j\}_{1\leq j\leq \overline N}$ there; with $\{(X_j,\dist_j,\mathcal{H}^N,p_j)\}_{1\leq j\leq \overline N}$ here in place of $\{(Y_j,\dist_{Y_j},\mathcal{H}^N,p_j)\}_{1\leq j\leq \overline N}$ there. 

For the sake of clarity let us stress that $\Omega_{i,j}^d=\Omega_i^d\cap B_{T_{i,j}}(p_{i,j})$, for every $i\in\mathbb N$ and every $1\leq j\leq N_i$ with $j\neq \overline N$; also $\Omega_{i,\overline N}^d=B_{T_{i,\overline N}}(p_{i,\overline N})\cap \Omega_i^d\setminus \cup_{j=1}^{\overline N-1}B_{T_{i,j}}(p_{i,j})$ if $\overline N<+\infty$; and $\Omega_i^v:=\Omega_i^d\setminus\cup_{j=1}^{N_i}\Omega_{i,j}^d$.

\begin{itemize}
    \item[Step 1.] We claim that
    \begin{equation}\label{eq:Step4a}
        \lim_i P(\Omega_i^v)=0,
    \end{equation}
    and that
    \begin{equation}\label{eq:Step4b}
        \lim_i P(\Omega_i^d) = 
        \lim_i \sum_{j=1}^{N_i}  P(\Omega_{i,j}^d).
    \end{equation}
    Moreover we claim that for every $1\leq j< \overline N +1$ we have 
    \begin{equation}\label{eq:CLAIMISOP}
        \lim_i P(\Omega_{i,j}^d) = 
        P(Z_j),
    \end{equation}
    and $Z_j$ is an isoperimetric region in $X_j$.
    
    In order to prove \eqref{eq:Step4a} and \eqref{eq:Step4b} we argue verbatim as in the proof of \cite[Step 4 of the Proof of Theorem 4.6]{AFP21}. Indeed, it is only needed the Bishop--Gromov comparison for volumes and perimeters, and mass splitting result in \cref{thm:RitoreRosalesNonSmooth}.
    
    In order to prove \eqref{eq:CLAIMISOP} and the fact that $Z_j$ are isoperimetric, we again argue verbatim as in \cite[Step 4 of the proof of Theorem 4.6]{AFP21}. Indeed, it is only needed the continuity of the isoperimetric profile, proved in \cref{lem:ProfileHolder}, the mass splitting result in \cref{thm:RitoreRosalesNonSmooth}, and the fact that isoperimetric regions are bounded, see \cref{thm:RegularityIsoperimetricSets}.
    
    At this stage of the proof we have shown that the first three items of the statement hold, indeed see the first line in \eqref{eq:Step1}, \cref{thm:RitoreRosalesNonSmooth} and \cref{thm:RegularityIsoperimetricSets}, \eqref{eq:Itemiii1} and \eqref{eq:Itemiii2}, and \eqref{eq:CLAIMISOP} and the line after \eqref{eq:CLAIMISOP}.
    
    \item[Step 2.]
    We now claim that fourth item of the Theorem holds. 
    
    Indeed we already know from \eqref{eq:Step3b} (and the last condition in \eqref{eq:Step1} when $\overline N<+\infty$) and from \cref{thm:RitoreRosalesNonSmooth} that
    \[
    W= \sum_{j=1}^{\overline{N}} \mathcal{H}^N (Z_j),
    \qquad
    V= \haus^N(\Omega) + W.
    \]
    Moreover by \cref{thm:RitoreRosalesNonSmooth}, \eqref{eq:Step4b}, and the third item of the statement we also deduce
    \[
    I_X(V) = \lim_i \left(P(\Omega_i^c) + P(\Omega_i^d)\right) \ge P(\Omega) + \sum_{j=1}^{\overline{N}} P(Z_j) = I_X(\haus^N(\Omega)) + \sum_{j=1}^{\overline{N}} I_{X_j}(\mathcal{H}^N(Z_j)).
    \]
    On the other hand, we are exactly in the hypotheses for applying \eqref{eq:InfinityInequalityIsopProfile}, that yields
    \[
    I_X(V) \le I_X(\haus^N(\Omega)) + \sum_{j=1}^{\overline{N}} I_{X_j}(\mathcal{H}^N(Z_j)).
    \]
    Hence equality holds, and this completes the proof of the fourth item of the Theorem.
    
\item[Step 3.]
We now prove that $\overline N$ is finite. Let us suppose for the sake of contradiction it is not.

For every $\ell\geq 1$ let us call $V_\ell:=V-\mathcal{H}^N(\Omega)-\sum_{j=1}^\ell \mathcal{H}^N(Z_j)$. Hence $V_\ell\to 0$ as a consequence of the second equality in \eqref{eq:UguaglianzeIntro}. 

Taking into account \cref{thm:RegularityIsoperimetricSets} we have that $Z_1$ has interior and exterior points in $X_1$. Let us now apply Item (i) of \cref{thm:VariazioniMaggi} to $Z_1\subset X_1$. We get that there exist constants $\eta_0:=\eta_0(Z_1)$ and $C:=C(Z_1)$ such that for every $\eta\in [0,\eta_0)$ there is a set $\widetilde Z_1\supset Z_1$ such that 
\[
\mathcal{H}^N(\widetilde Z_1)=\mathcal{H}^N(Z_1)+\eta, \qquad P(\widetilde Z_1)\leq C\eta + P(Z_1).
\]
Since $V_\ell\to 0$ as $\ell\to +\infty$ we have that there exists $\ell_0\in\mathbb N$ such that $V_\ell< \eta_0$ for every $\ell\geq \ell_0$. Let us now fix an arbitrary $\ell\geq \ell_0$.

Let us apply \cref{thm:VariazioniMaggi} as described above with $\eta=V_\ell$. We obtain $\widetilde Z_1\subset X_1$ such that
\begin{equation}\label{eqn:InequalityGoodVariation}
\mathcal{H}^N(\widetilde Z_1)=\mathcal{H}^N(Z_1)+V_\ell, \qquad P(\widetilde Z_1)\leq C V_\ell + P(Z_1).
\end{equation}
By \cref{prop:ProfileDecomposition} it follows that
\begin{equation}\label{eqn:SoughtInequalityOnI}
    I_X(V)\leq P(\Omega)+P(\widetilde Z_1) + \sum_{j=2}^\ell P(Z_j).
\end{equation}
We now aim at proving that 
\begin{equation}\label{eqn:AnotherInequality}
    I_X(V_\ell)+P(\Omega)+\sum_{k=1}^\ell P(Z_k)\leq I_X(V).
\end{equation}
Indeed, from the first equality in \eqref{eq:UguaglianzeIntro} we get that 
$$
I_X(V)=P(\Omega)+\sum_{k=1}^{+\infty}P(Z_k),
$$
and hence \eqref{eqn:AnotherInequality} is equivalent to proving 
$$
I_X(V_\ell) \leq \sum_{k=\ell+1}^{+\infty}P(Z_k),
$$
which directly comes from \eqref{eq:InfinityInequalityIsopProfile}, since $V_\ell=\sum_{k=\ell+1}^{+\infty}\mathcal{H}^N(Z_k)$ by the very definition of $V_\ell$ and the second equality in \eqref{eq:UguaglianzeIntro}, and since the $Z_k$'s are isoperimetric regions for every $k\geq \ell+1$.

Putting together the inequalities in \eqref{eqn:InequalityGoodVariation}, \eqref{eqn:SoughtInequalityOnI}, and \eqref{eqn:AnotherInequality}, we get that, for every $\ell\geq \ell_0$, the following inequality holds 
\begin{equation}\label{eqn:ContradictionVolume}
    I_X(V_\ell)\leq CV_\ell.
\end{equation}
We now aim at showing that \eqref{eqn:ContradictionVolume} gives a contradiction for $\ell\geq \ell_0$ large enough. Indeed, as a consequence of \cref{prop:IsopVolumiPiccoli}, we have that there exist constants $\vartheta,\Theta>0$ such that 
\begin{equation}\label{eqn:PrevIn}
\mathcal{H}^N(E)\leq \vartheta \qquad\Rightarrow\qquad \left(\mathcal{H}^N(E)\right)^{\frac{N-1}{N}}\leq \Theta P(E),
\end{equation}
for every set of finite perimeter $E$. Since $V_\ell\to 0$, for $\ell\geq \ell_0$ large enough we have $V_\ell\leq \vartheta$. Thus \eqref{eqn:PrevIn} implies that, for $\ell\geq \ell_0$ large enough, the following inequality holds
\begin{equation}\label{eqn:ContradictionVolume2}
I_X(V_\ell)\geq \Theta^{-1}V_\ell^{\frac{N-1}{N}}.
\end{equation}
Finally, for $\ell\geq \ell_0$ large enough, \eqref{eqn:ContradictionVolume} and \eqref{eqn:ContradictionVolume2} are in contradiction, since $V_\ell\to 0$, thus showing that $\overline N$ must be finite.
\end{itemize}
\end{proof}

We conclude this part by proving \cref{cor:LinearUpperBound}, which generalizes the previous estimate in \cite{Nar14}.

\begin{proof}[Proof of \cref{cor:LinearUpperBound}]
Exploiting \cref{thm:MassDecompositionINTRO} and \cref{thm:GeneralizedCompactness}, it is proved in \cite[Proposition 4.15]{AntonelliPasqualettoPozzettaSemola} that the isoperimetric profile of $\RCD(K,N)$ spaces is strictly subadditive for small volumes. More precisely, there exists $\eps=\eps(K,N,v_0)>0$ such that whenever $(Y,\dist_Y,\haus^N)$ is an $\RCD(K,N)$ with $N\ge2$ and $\inf_{y\in Y} \haus^N(B_1(y))\ge v_0>0$ then the profile $I_Y$ is strictly subadditive on $(0,2\eps)$.

Let $(X,\dist,\mathcal{H}^N)$ be a noncompact $\RCD(K,N)$ space such that $\mathcal{H}^N(B_1(x))\geq v_0$ for every $x\in X$. Let $V>0$ and let $\Omega_i\subset X$ be a minimizing (for the perimeter) sequence of bounded sets of volume $V$. Let $\overline{N}, \Omega, Z_j, X_j$ be given by applying \cref{thm:MassDecompositionINTRO}.
Let us also denote $Z_0\eqdef \Omega$ and $X_0\eqdef X$ here.

We claim that $\haus^N(Z_j)<\eps$ for at most one index $0\le j<\overline{N}+1$. Indeed, suppose by contradiction that $\haus^N(Z_k),\haus^N(Z_\ell)< \eps$ for some $k\neq \ell$ with $0\le j,k<\overline{N}+1$. Then we get
\[
\begin{split}
    I_X(V) &= \sum_{j=0}^{\overline{N}} P(Z_j) =
    I_{X_k}(\haus^N(Z_k))+I_{X_\ell}(Z_\ell) +\sum_{\stackrel{j=0}{j\neq k,\ell}}^{\overline{N}} I_{X_j}(Z_j) \\
    &\ge I_X(\haus^N(Z_k)) + I_X(\haus^N(Z_\ell)) +\sum_{\stackrel{j=0}{j\neq k,\ell}}^{\overline{N}} I_{X_j}(Z_j) \\
    &> I_X(\haus^N(Z_k) + \haus^N(Z_\ell))  +\sum_{\stackrel{j=0}{j\neq k,\ell}}^{\overline{N}} I_{X_j}(Z_j)\\
    &\ge I_X(V),
\end{split}
\]
where we applied \cref{prop:ComparisonIsoperimetricProfile} and the strict subadditivity of $I_X$ for small volumes, leading to a contradiction.

We therefore conclude that $\overline{N} \le 1 + V/\eps$.
\end{proof}

\section{An equivalent condition for compactness of minimizing sequences}\label{sec:Equivalent}
In this final chapter we present an equivalent condition that ensures compactness of the minimizing sequences for the isoperimetric problem. This equivalent condition involves a kind of isoperimetric profile at infinity that we are going to define.

\begin{definition}\label{def:Isoprofileatinfinity}
Let $(X,\dist,\meas)$ be a noncompact metric measure space. For any $V \in (0,\meas(X))$ we define
\[
I^\infty_X(V)\eqdef \inf \left\{\liminf_i P(E_i) \st \meas(E_i)\to V, \, \quad\text{$\forall$ ball B},\,\,E_i \cap B = \emptyset \quad\text{for $i$ large enough} \right\},
\]
\[
\widetilde I^\infty_X(V) \eqdef \inf \left\{ 
\liminf_i P(E_i) \st \meas(E_i)\to V, \, \lim_i \meas(E_i \cap B) = 0 \quad\text{$\forall$ ball B}
\right\}.
\] 
We call $I_X^\infty$ \emph{the isoperimetric profile at infinity of $(X,\dist,\meas)$} and $\tilde{I}_X^\infty$ \emph{the isoperimetric profile at infinity volumewise of $(X,\dist,\meas)$}.
\end{definition}

In the previous definition, we obviously have that $\widetilde I^\infty_X \le  I^\infty_X$. Under the following regularity assumptions on the space $X$, we can prove that the converse holds. The notion of isotropicity was introduced in
\cite[Definition 6.1]{AMP04}. We remark that all
\({\sf RCD}(K,N)\) spaces with \(N<\infty\), are isotropic PI spaces;
cf.\ \cite[Example 1.31(iii)]{BPR20}.

\begin{lemma}\label{lem:EqualityIsopInfiniti}
Let $(X,\dist,\meas)$ be a noncompact metric measure space. Assume that $(X,\dist,\meas)$ is PI and isotropic, then $\widetilde I^\infty_X =  I^\infty_X$.
\end{lemma}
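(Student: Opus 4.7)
The inequality $\widetilde I^\infty_X(V)\le I^\infty_X(V)$ is immediate from the definitions, so the plan is to prove the reverse inequality by improving every admissible sequence for $\widetilde I^\infty_X(V)$ into an admissible sequence for $I^\infty_X(V)$ without increasing the asymptotic perimeter. Concretely, fix a basepoint $o\in X$ and take a sequence $E_i\subset X$ with $\meas(E_i)\to V$, $\meas(E_i\cap B)\to 0$ for every ball $B$, and $\liminf_i P(E_i)$ close to $\widetilde I^\infty_X(V)$. I will carve out from each $E_i$ a large ball $B_{r_i}(o)$ with $r_i\to+\infty$ chosen so that (a) the mass removed is negligible and (b) the surgery creates at most an infinitesimal cost in perimeter; the sets $\widetilde E_i:=E_i\setminus B_{r_i}(o)$ will then eventually avoid every fixed ball, have mass tending to $V$, and satisfy $\liminf_i P(\widetilde E_i)\le \liminf_i P(E_i)$.

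The construction of the radii $r_i$ is the main technical step, and it is the place where both the PI and isotropy assumptions enter. By a diagonal extraction exploiting $\meas(E_i\cap B_R(o))\to_i 0$ for each fixed $R$, I first find a sequence $R_i\to+\infty$ with $\meas(E_i\cap B_{2R_i}(o))\to 0$. In an isotropic PI space the distance function $d_o$ is $1$-Lipschitz with slope $1$ almost everywhere, and a Fleming--Rishel coarea formula applied to $d_o$ against the set $E_i^{(1)}$ yields
\[
\int_{R_i}^{2R_i} P(B_r(o),E_i^{(1)})\,\de r \le \meas\bigl(E_i\cap(B_{2R_i}(o)\setminus\overline{B_{R_i}(o)})\bigr)\le \meas(E_i\cap B_{2R_i}(o)).
\]
Hence the set of radii $r\in[R_i,2R_i]$ with $P(B_r(o),E_i^{(1)})\le \meas(E_i\cap B_{2R_i}(o))/R_i$ has positive measure. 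Intersecting this set with the full-measure set of radii that satisfy the intersection/difference decomposition of perimeters (which, arguing as in \cite[Corollary 2.6]{AntonelliPasqualettoPozzetta}, is available in isotropic PI spaces), I pick $r_i$ verifying in addition
\[
P(E_i\setminus B_{r_i}(o))\le P(E_i, X\setminus \overline{B_{r_i}(o)})+P(B_{r_i}(o),E_i^{(1)})\le P(E_i)+\frac{\meas(E_i\cap B_{2R_i}(o))}{R_i}.
\]

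The conclusion is then immediate. For the competitor $\widetilde E_i:=E_i\setminus B_{r_i}(o)$, the last display gives $\liminf_i P(\widetilde E_i)\le \liminf_i P(E_i)$; the choice of $R_i$ gives $\meas(\widetilde E_i)\to V$; and $r_i\ge R_i\to+\infty$ guarantees that $\widetilde E_i$ eventually avoids every fixed ball. Thus $I^\infty_X(V)\le \liminf_i P(E_i)$, and taking the infimum over admissible sequences for $\widetilde I^\infty_X(V)$ closes the argument. The obstacle I anticipate is not conceptual but bookkeeping: one has to make sure that both the Fleming--Rishel inequality for $d_o$ and the good-radius decomposition of perimeters hold in the isotropic PI framework with the same precision used elsewhere in the paper for $\RCD$ spaces, which is precisely what the isotropy hypothesis is designed to provide (ensuring that $|D\chi_{B_r(o)}|$ lives on $\partial B_r(o)$ and integrates correctly against the coarea identity for $d_o$).
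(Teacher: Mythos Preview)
Your proof is correct and follows essentially the same approach as the paper: both arguments pick a nearly minimizing sequence for $\widetilde I^\infty_X(V)$, use the coarea formula for $\dist(o,\cdot)$ to find good radii $r_i\to+\infty$ along which $P(B_{r_i}(o),E_i^{(1)})$ is small, invoke \cite[Corollary 2.6]{AntonelliPasqualettoPozzetta} for the perimeter decomposition of $E_i\setminus B_{r_i}(o)$, and conclude that the truncated sets are admissible for $I^\infty_X(V)$ with no asymptotic increase in perimeter. The only cosmetic difference is that the paper fixes $k$ and selects an index $i_k$ and a radius $r_k\in(k/2,k)$ directly, whereas you first diagonalize to get $R_i\to+\infty$ with $\meas(E_i\cap B_{2R_i}(o))\to 0$ and then choose $r_i\in[R_i,2R_i]$; both routes amount to the same estimate.
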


\begin{proof}
Let $V\in(0,\meas(X))$. We need to prove that $\widetilde I^\infty_X (V) \ge  I^\infty_X(V)$. Let $E_i\subset X$ be such that $\meas(E_i)\to V$, $\lim_i \meas(E_i \cap B) = 0$ for any ball $B$, and $\lim_i P(E_i) = \widetilde I^\infty_X(V)$. Fix $o \in X$. For any $k \in \N$, since $\int_0^k P(B_r(o), E_i^{(1)}) \de r = \meas(E_i \cap B_k(o)) \to_i 0$, using also \cite[Corollary 2.6]{AntonelliPasqualettoPozzetta}, we find $i_k \in \N$ and $r_k \in (\tfrac{k}{2}, k)$ such that
\[
\meas(E_{i_k} \cap B_{r_k}(o)) <\frac1k,
\qquad
P( B_{r_k}(o), E^{(1)}_{i_k} ) <\frac1k,
\]
\[
P(E_{i_k}\setminus B_{r_k}(o)) = P(E_{i_k}, X\setminus B_{r_k}(o)) + P( B_{r_k}(o), E^{(1)}_{i_k} ) .
\]
Setting $F_k\eqdef E_{i_k} \setminus B_{r_k}(o)$ we get that
\[
I^\infty_X(V) \le \liminf_k P(F_k) \le \liminf_k P(E_{i_k}) = \widetilde I^\infty_X(V).
\]
\end{proof}

Let us now show a couple of inequalities involving $I_X$ and $I_X^\infty$.
\begin{lemma}\label{lem:DisugProfiliInfinito}
Let $(X,\dist,\meas)$ be a noncompact isotropic PI space. Assume that the isoperimetric profile $I_X$ is lower semicontinuous, then $I_X\le \widetilde I^\infty_X = I^\infty_X$ and
\begin{equation*}
    I_X(V) \le I_X(V_1) + I^\infty_X(V_2),
    \qquad
    \forall\, V_1+V_2 = V,
\end{equation*}
for any $V\in(0,\meas(X))$.
\end{lemma}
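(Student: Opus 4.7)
The identity $\widetilde I_X^\infty=I_X^\infty$ is exactly the content of \cref{lem:EqualityIsopInfiniti}, so it will be enough to prove the two inequalities $I_X\le\widetilde I_X^\infty$ and $I_X(V)\le I_X(V_1)+I_X^\infty(V_2)$ for every splitting $V_1+V_2=V\in(0,\meas(X))$.

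For the first, I would fix $V$ and pick a sequence $E_i$ realizing the infimum defining $\widetilde I_X^\infty(V)$: after extracting a subsequence, $\meas(E_i)\to V$, $\meas(E_i\cap B)\to 0$ for every ball $B$, and $P(E_i)\to\widetilde I_X^\infty(V)$. Since $E_i$ is admissible for $I_X(\meas(E_i))$, the assumed lower semicontinuity of $I_X$ at $V$ yields
\[
I_X(V)\le\liminf_i I_X(\meas(E_i))\le\liminf_i P(E_i)=\widetilde I_X^\infty(V).
\]

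The subadditivity inequality requires slightly more care, as near-minimizers for $I_X(V_1)$ cannot in this generality be assumed to be bounded. The plan is, given $\epsilon>0$, to pick a finite-perimeter set $F$ with $\meas(F)=V_1$ and $P(F)\le I_X(V_1)+\epsilon$, and then truncate it: fix $o\in X$ and use the coarea-type identities of \cite[Corollary 2.6]{AntonelliPasqualettoPozzetta} to produce radii $R_k\to+\infty$ such that $P(B_{R_k}(o),F^{(1)})\to 0$ and $P(F\cap B_{R_k}(o))\le P(F,B_{R_k}(o))+P(B_{R_k}(o),F^{(1)})$. Setting $F_k\eqdef F\cap B_{R_k}(o)$ this guarantees $\meas(F_k)\uparrow V_1$ and $\limsup_k P(F_k)\le P(F)$. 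In parallel, I would take a minimizing sequence $E_i$ for $I^\infty_X(V_2)$, chosen so that $P(E_i)\to I^\infty_X(V_2)$ and $E_i\cap B=\emptyset$ eventually for every fixed ball $B$. Since each $F_k$ is bounded by construction, $F_k$ and $E_i$ are eventually disjoint, whence $\meas(F_k\cup E_i)\to\meas(F_k)+V_2$ and $P(F_k\cup E_i)\le P(F_k)+P(E_i)$.

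Applying the lower semicontinuity of $I_X$ at $\meas(F_k)+V_2$ then yields $I_X(\meas(F_k)+V_2)\le P(F_k)+I^\infty_X(V_2)$, and a second application of lower semicontinuity along the sequence $\meas(F_k)+V_2\uparrow V$ gives $I_X(V)\le I_X(V_1)+I^\infty_X(V_2)+\epsilon$; sending $\epsilon\to 0$ concludes the argument. The only genuinely technical point is the truncation step, where one needs radii for which \emph{both} the boundary perimeter $P(B_{R_k}(o),F^{(1)})$ becomes negligible and the additivity formula $P(F\cap B_{R_k}(o))=P(F,B_{R_k}(o))+P(B_{R_k}(o),F^{(1)})$ holds; this is secured by the cited coarea result together with the finiteness of $\meas(F)$.
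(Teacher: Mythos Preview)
Your argument is correct and follows essentially the same route as the paper's proof: both use lower semicontinuity of $I_X$ for the first inequality, and for the subadditivity both truncate a near-minimizer for $I_X(V_1)$ via the coarea formula together with \cite[Corollary~2.6]{AntonelliPasqualettoPozzetta}, then glue it with a diverging sequence witnessing $I^\infty_X(V_2)$. The only cosmetic difference is that the paper merges the truncation index and the diverging-sequence index into a single diagonal sequence, whereas you keep them separate and apply lower semicontinuity twice; both organizations work equally well.
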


\begin{proof}
Assuming that $I_X$ is lower semicontinuous, then
\begin{equation*}
    I_X(V) = \inf\left\{ \liminf_i P(E_i) \st \lim_i \meas(E_i) = V \right\},
\end{equation*}
for any $V\in(0,\meas(X))$. Hence, recalling  \cref{lem:EqualityIsopInfiniti}, then $I_X\le \widetilde I^\infty_X = I^\infty_X$.

Now let $V_1,V_2,V$ be fixed as in the statement. Fix also $\eps>0$. Let $E\subset X$ such that $\meas(E)=V_1$ and $P(E)\le I_X(V_1) + \eps$. Let also $E_i\subset X$ such that $\meas(E_i)\to V_2$, $E_i \cap B=\emptyset$ for any ball $B$ for large $i$, and $P(E_i) \le I^\infty_X(V_2) + \eps$. Up to subsequence, there is a diverging sequence $r_i>0$ such that $E_i \cap B_{2r_i}(o)=\emptyset$ for any $i$, for some $o \in X$, and
\[
P(E \cap B_{r_i}(o) ) = P(E, B_{r_i}(o)) + P(B_{r_i}(o), E^{(1)}),
\qquad
P(B_{r_i}(o), E^{(1)})\le \frac1i,
\]
where we used \cite[Corollary 2.6]{AntonelliPasqualettoPozzetta} and the coarea formula. Letting $F_i\eqdef (E \cap B_{r_i}(o)) \cup E_i$, then $\meas(F_i)\to V$ and then
\[
I_X(V) \le \liminf_i P(F_i) \le \liminf_i \left(P(E) + P(E_i)\right) \le I_X(V_1) + I^\infty_X(V_2) + 2 \eps.
\]
By arbitrariness of $\eps$, the claim follows.
\end{proof}

The following theorem is a slight empowered version of \cref{thm:RitoreRosalesNonSmooth}, where we allow that the volumes of the minimizing sequence are not fixed, but just tend to a fixed value $V$. The proof is obtained arguing verbatim as in the proof of \cref{thm:RitoreRosalesNonSmooth}, but we need an additional hypothesis on the continuity of the isoperimetric profile. We skip the details for the sake of brevity.

\begin{theorem}\label{thm:RitoreRosalesModificato}
    Let $K\leq 0$, and let $N\geq 2$. Let $(X,\dist,\mathcal{H}^N,x)$ be a pointed noncompact $\RCD(K,N)$ space.
    
    Assume that the isoperimetric profile $I_X$ is continuous and let $\{\Omega_k\}_{k\in\mathbb N}$ be a sequence of Borel sets such that
    \[
    \lim_k \haus^N(\Omega_k) = V,
    \qquad
    \lim_k P(\Omega_k) = I_X(V),
    \]
    for some $V \in (0,\haus^N(X))$.
    
    Hence, up to passing to a subsequence, there exists a finite perimeter set $\Omega\subseteq X$ and a sequence of finite perimeter sets $\{\Omega_k^c\}_{k\in\mathbb N}$ and $\{\Omega_k^d\}_{k\in\mathbb N}$ such that the following holds.
    \begin{itemize}
        \item[(i)] There exists a diverging sequence of radii $\{r_k\}_{k\geq 1}$ such that $$
        \Omega_k^c:=\Omega_k\cap B_{r_k}(x),\qquad \Omega_k^d:=\Omega_k\setminus B_{r_k}(x).
        $$
        \item[(ii)] We have that 
        $$
        \lim_{k\to+\infty}\left(P(\Omega_k^c)+P(\Omega_k^d)\right)=I_X(V).
        $$
        \item[(iii)] $\Omega$ is an isoperimetric region and 
        \begin{equation}
        \lim_{k\to +\infty}\mathcal{H}^N(\Omega_k^c)=\mathcal{H}^N(\Omega), 
        \qquad
        \lim_{k\to +\infty}P(\Omega_k^c)=P(\Omega).
        \end{equation}
    \end{itemize}
\end{theorem}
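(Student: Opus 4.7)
The plan is to mimic the proof of \cref{thm:RitoreRosalesNonSmooth} step by step, tracking the difference that $V_k := \haus^N(\Omega_k)$ is no longer constant but only converges to $V$. As a first step, I will apply \cref{lem:CompactnessPerimeter} to $f_k := \chi_{\Omega_k}$ and extract a subsequence converging in $L^1_{\mathrm{loc}}(X,\haus^N)$ to some $\chi_{\Omega}$, with $\haus^N(\Omega)\le V$ by Fatou and $P(\Omega)\le I_X(V)$ by lower semicontinuity. If $\haus^N(\Omega)=V$ the conclusion is trivial with $\Omega_k^d=\emptyset$ (continuity of $I_X$ yields $P(\Omega)=I_X(V)$), so we assume $\haus^N(\Omega)<V$.

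I would then construct the diverging sequence $r_k$ by the same inductive procedure as in \cref{thm:RitoreRosalesNonSmooth}: conditions (a)--(e) persist with only cosmetic changes, namely replacing the bound $V/k$ in (d) with $(\sup_k V_k)/k$ in the coarea estimate, and choosing $r_k$ via \cite[Corollary 2.6]{AntonelliPasqualettoPozzetta} so that $P(\Omega_k^c)$ and $P(\Omega_k^d)$ split additively modulo the small error $P(B_{r_k}(x),\Omega_k^{(1)})$. Items (i) and (ii) are then obtained verbatim, using $P(\Omega_k)\to I_X(V)$ in place of $P(\Omega_k)\to I_X(V)$. The first convergence in (iii) follows from $L^1_{\mathrm{loc}}$-convergence combined with $\haus^N(\Omega\setminus B_{r_k}(x))\to0$, and the lower semicontinuity bound $P(\Omega)\le \liminf_k P(\Omega_k^c)$ is immediate since $\Omega_k^c\to\Omega$ in $L^1_{\mathrm{loc}}$.

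The crucial point, and the only real place where the relaxed volume hypothesis matters, is the proof that $\Omega$ is isoperimetric. Arguing by contradiction as in the original proof, suppose $E\subset X$ satisfies $\haus^N(E)=\haus^N(\Omega)=:V_0$ and $P(E)<P(\Omega)$. Following the original construction, I would pick interior and exterior points $p,q$ of $E$ given by \cref{thm:RegularityIsoperimetricSets}, truncate $E$ to $E_k = E\cap B_{t_k}(x)$ with good perimeter properties, and adjust by small balls $B_{\rho_{p,k}}(p)$, $B_{\rho_{q,k}}(q)$ so that the competitor $T_k := E_k'\cup \Omega_k^d$ satisfies $\haus^N(T_k)=V_k$, with $\rho_{p,k},\rho_{q,k}\to 0$. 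Then by the Bishop--Gromov perimeter bound on small balls we get
\[
I_X(V_k) \le P(T_k) \le P(E) + P(\Omega_k^d) + o(1).
\]
Now, instead of invoking the fixed value $I_X(V)$, I would invoke the continuity of $I_X$ at $V$, which gives $I_X(V_k)\to I_X(V)$; this yields
\[
I_X(V) \le P(E) + \liminf_k P(\Omega_k^d) < P(\Omega) + \liminf_k P(\Omega_k^d) \le \liminf_k P(\Omega_k^c) + \liminf_k P(\Omega_k^d) \le I_X(V),
\]
where the last inequality uses item (ii). This contradiction forces $\Omega$ to be isoperimetric. The second equality in (iii), $\lim_k P(\Omega_k^c) = P(\Omega)$, then follows by applying the same comparison argument with $\Omega$ in place of $E$, exactly as in the original proof.

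The only genuine obstacle is the passage from the fixed-volume competitor estimate to the variable-volume one; this is handled cleanly by the continuity of $I_X$, which is precisely the extra hypothesis. Everything else is a direct transcription of the proof of \cref{thm:RitoreRosalesNonSmooth}, which is why I would omit the full details and indicate only this adjustment.
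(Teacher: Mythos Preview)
Your proposal is correct and follows exactly the approach the paper indicates (argue verbatim as in \cref{thm:RitoreRosalesNonSmooth}, invoking continuity of $I_X$ to pass from $I_X(V_k)$ to $I_X(V)$ in the competitor estimate). One minor correction: the points $p,q$ in the contradiction argument should be density points $p\in E^{(1)}$, $q\in E^{(0)}$ (which exist whenever $0<\haus^N(E)<\haus^N(X)$), not topological interior/exterior points---\cref{thm:RegularityIsoperimetricSets} is neither applicable here (no uniform lower bound on $\haus^N(B_1(\cdot))$ is assumed, and $E$ is not isoperimetric) nor needed.
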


We now conclude the section by proving the equivalent criterion for all minimizing sequences to precompactly converge to a solution of the isoperimetric problem without loss of mass.

\begin{proof}[Proof of {\cref{thm:AbstractCriterionCompactness}}]
We start from the implication $(1)\Rightarrow(2)$. Let  $\Omega_k \subset X$ such that $\haus^N(\Omega_k)\to V$ and $P(\Omega_k)\to I_X(V)$. Let $r_k, \Omega_k^c,\Omega_k^d,\Omega$ be given by applying \cref{thm:RitoreRosalesModificato}. If by contradiction $V_2\eqdef \lim_k \haus^N(\Omega_k^d)>0$, then
\[
I_X(V) = P(\Omega) + \lim_k P(\Omega^d_k) \ge I_X(\haus^N(\Omega)) + I^\infty_X(V_2),
\]
contradicting $(1)$.

It remains to prove that $(2)\Rightarrow (1)$. Let $V_1+V_2=V$ with $V_2\in (0,V]$. We consider the case $V_2<V$, the case $V_2=V$ being analogous and simpler. Suppose by contradiction that $I_X(V) \ge I_X(V_1) + I^\infty_X(V_2)$.

Let $E_j\subset X$ be such that $\haus^N(E_j)=V_1$ and $P(E_j)\le I_X(V_1)+\tfrac1j$, and $\tilde E_i\subset X$ such that $\haus^N(\tilde E_i)\to V_2$, $\tilde E_i \cap B=\emptyset$ for any ball $B$ for large $i$, and $P(\tilde E_i)\to I^\infty_X(V_2)$. Arguing similarly as in the proof of \cref{lem:DisugProfiliInfinito}, for any $j$ we can find $i_j\ge j$ and a diverging sequence $r_j>0$ such that
\[
P(E_j \cap B_{r_j}(o) ) = P(E_j, B_{r_j}(o)) + P(B_{r_j}(o), E_j^{(1)}),
\quad
\haus^N(E_j\setminus B_{r_j}(o) ) \le \frac1j,
\quad
P(B_{r_j}(o), E_j^{(1)})\le \frac1j,
\]
and $\tilde E_{i_j} \cap B_{2r_j}(o)=\emptyset$ for any $j$, for some $o \in X$.
Letting $F_j\eqdef (E_j \cap B_{r_j}(o)) \cup\tilde E_{i_j}$, then $\mathcal{H}^N(F_j)\to V$ and then
\[
I_X(V) \le \liminf_j P(F_j) \le I_X(V_1) + I^\infty_X(V_2) \le I_X(V),
\]
where the last inequality is the absurd hypothesis. Hence we can apply $(2)$ on the sequence $\{F_j\}$, which then admits a subsequence converging in $L^1(X)$. However, the sets $\tilde E_{i_j}$'s are diverging and their measure converges to $V_2>0$, hence no subsequences of $\{F_j\}$ can converge strongly in $L^1(X)$, contradicting $(2)$.
\end{proof}

We conclude with some comments on the isoperimetric profile at infinity $I^\infty_X$.

\begin{remark}
Let $(X,\dist,\mathcal{H}^N)$ be a noncompact $\RCD(K,N)$ space. Assume there exists $v_0>0$ such that $\mathcal{H}^N(B_1(x))\geq v_0$ for every $x\in X$. Let
\[
\mathcal{F}_\infty \eqdef \left\{ (Y,\dist_Y,\haus^N_Y) \st \text{$Y$ is a pmGH-limit at infinity along $X$} \right\},
\]
and let $X^\infty\eqdef \bigsqcup \mathcal{F}_\infty$ be the disjoint union of the spaces at infinity along $X$. Defining
\[
I_{X^\infty} (V) \eqdef \inf \left\{\sum_{i \in \N} P(E_i) \st E_i \subset Y_i \in \mathcal{F}_\infty \,\forall\, i , \quad \sum_{i\in\N} \haus^N(E_i)=V \right\},
\]
for $V\ge0$, one can prove that $I^\infty_X =I_{X^\infty}$. Indeed, the equality follows arguing similarly as in the proof of \cref{prop:ProfileDecomposition}.

\medskip

In case hypotheses on the asymptotic behavior of the space $X$ are assumed, for instance in case the space is GH-asymptotic to some limit (see \cite{MondinoNardulli16, EichmairMetzger, Shi, AFP21}), then one clearly can identify $I^\infty_X$ more explicitly.
\end{remark}

\printbibliography[title={References}]
\end{document}